\tikzset{negated/.style={
        decoration={markings,
            mark= at position 0.5 with {
                \node[transform shape] (tempnode) {$\backslash$};
            }
        },
        postaction={decorate}
    }
}
\theoremstyle{plain}
\newtheorem{theorem}{Theorem}[section]
\newtheorem{proposition}[theorem]{Proposition}
\newtheorem{lemma}[theorem]{Lemma}
\newtheorem{corollary}[theorem]{Corollary}
\newtheorem*{theorem*}{Theorem}
\theoremstyle{definition}
\newtheorem{definition}[theorem]{Definition}
\newtheorem*{definition*}{Definition}
\theoremstyle{remark}
\newtheorem{remark}{Remark}[section]
\newtheorem{question}{Question}
\DeclareMathOperator{\Aut}{Aut}
\DeclareMathOperator{\GL}{\mathbf{GL}}
\DeclareMathOperator{\Comm}{Comm}
\title{The definitions of approximate lattices}
\author{Simon Machado\\
ETHZ}
\email{smachado@ethz.ch}
\begin{document}

\begin{abstract}
Approximate lattices of locally compact groups were first studied in a seminal monograph of Yves Meyer and were subsequently used in the theory of aperiodic order to model objects such as Pisot numbers, quasi-cristals or aperiodic tilings. Meyer studied approximate lattices of Euclidean spaces - now dubbed Meyer sets. A fascinating feature of this theory is the wealth of natural and equivalent definitions of Meyer sets that are available. 

The study of approximate lattices in non-commutative groups has recently attracted more attention. They are meant as generalisation of lattices of locally compact groups (i.e. discrete subgroup of finite co-volume) and, as such,  are roughly defined as approximate subgroups of locally compact groups that are both discrete and have finite co-volume. In this generalized framework, the correct notion of `finite co-volume' is however up for interpretation. A number of definitions recently arose following work of Bj\"{o}rklund and Hartnick, Hrushovski, and the author. In stark contrast with the Euclidean framework,  little is known of the relations between these notions.

Our main objective in this paper is to address this issue. We establish clear relations between these definitions and show that, in fact, these definitions are not all equivalent. We prove nevertheless that all the notions mentioned are related to one another and simply represent varying degrees of generality. Among the original results we prove here, we extend a cornerstone theorem of Meyer and show that all strong approximate lattices are laminar; we build measures with weak invariance properties on the invariant hull of approximate lattices; we use quasi-models to show that any BH-approximate lattice is contained in an approximate lattice; we prove that even laminar approximate lattices can be arbitrarily far from strong approximate lattices and model sets. 
\end{abstract}

\maketitle

\section{Introduction}
An \emph{approximate subgroup} is defined as a subset $\Lambda$ of a group $G$ that is symmetric ($\Lambda = \Lambda^{-1}$), contains the identity and satisfies $\Lambda^2 \subset F\Lambda$ for some $F \subset G$ finite. Meyer was the first to study approximate lattices in his seminal monograph \cite{meyer1972algebraic}. There - and in the large body of work that followed \cite{moody1997meyer} - the focus was  more specifically put on \emph{uniform approximate lattices}. They are defined as those discrete approximate subgroups $\Lambda$ of a locally compact group $G$ that are \emph{relatively dense}: every point of $G$ is within a uniformly bounded distance of a point of $\Lambda$ or, equivalently, $\Lambda K = G$ for some compact subset $K$ of $G$.  Meyer related uniform approximate lattices of locally compact abelian groups to the so-called \emph{model sets} which are often understood as the `ideal' or `regular' approximate lattices. Model sets are subsets of $G$ built from a lattice $\Gamma$ in a product of locally compact groups $G \times H$. One first cuts a strip of $\Gamma$ along $G$ and then projects it to $G$. Precisely, fix a relatively compact symmetric neighbourhood of the identity $W_0$ of $H$ - the \emph{window} - and define the model set
$$ M(G,H,\Gamma,W_0):=p_G\left(\Gamma \cap (G \times W_0)\right)$$ 
where $p_G: G \times H \rightarrow G$ denotes the natural projection. 

 Understanding the structure of uniform approximate lattices already offers a formidable challenge, but Meyer's model sets beg for a definition of \emph{non-uniform} approximate lattices. Perhaps the simplest attempt at a definition is due to Hrushovski \cite[Def. A.1]{hrushovski2020beyond}: $\Lambda$ is an \emph{approximate lattice} if it is a uniformly discrete approximate subgroup of a locally compact group $G$ admitting a subset $\mathcal{F} \subset G$ with finite Haar-measure such that $\Lambda \mathcal{F} = G$. This slick definition is satisfying in many respect but does not capture \emph{a priori} the dynamical aspects a generalisation of lattices should. Alternative definitions of approximate lattices - which pre-date Hrushovski's - with an ergodic-theoretic flavour were introduced in \cite{bjorklund2016approximate, machado2020apphigherrank}. They are the \emph{strong}, $\star$- and \emph{BH-approximate lattices}. All three definitions involve relevant probability measures on the \emph{invariant hull} of $\Lambda$ which is defined as the closure of the orbit $G \cdot \Lambda$ of $\Lambda$ in the Chabauty space of closed subsets of $G$ endowed with the Chabauty--Fell topology. The invariant hull is a dynamical space generalising the quotient space by a closed subgroup. When $\Lambda$ is a subgroup all these definitions reduce to $\Lambda$ being a lattice, see \S \ref{Section: The definitions of approximate lattices} for details.
 
 In \cite{bjorklund2016approximate}, \cite{hrushovski2020beyond} and \cite{machado2020apphigherrank} the respective authors discussed the advantages of each notion. It was furthermore asked in Caprace and Monod's problem list \cite{zbMATH06949681} which should be the correct definition of an approximate lattice in a (non-commutative) locally compact group. Our main result addresses this problem by establishing all relations between the six definitions introduced above. Together with previously known results we can draw the following schema:

\begin{figure}[h]
\begin{tikzcd}[column sep=tiny] \text{Model sets} \ar[d, leftrightsquigarrow] \ar[drr,leftrightsquigarrow]\\
\text{Strong a.l.} \arrow[rr, Rightarrow] &&\ \star- \text{a.l.} \    \arrow[drr, "/" marking, leftsquigarrow]\arrow[rr, Rightarrow] && \text{Approximate lattices} \arrow[d,Leftarrow]\arrow[rr, Rightarrow]\arrow[rr, leftarrow, dashed, bend left=40]  && \text{BH-a.l.} \\
&&&& \text{Uniform a.l.} 
\end{tikzcd}
\begin{center}
$\protect\Rightarrow$: implication, $\protect\rightsquigarrow$: commensurability and $\protect\dashrightarrow$: inclusion.
\end{center}
\caption{Hierarchy of approximate lattices.} \label{Figure: Hierarchy of approximate lattices}
\end{figure}
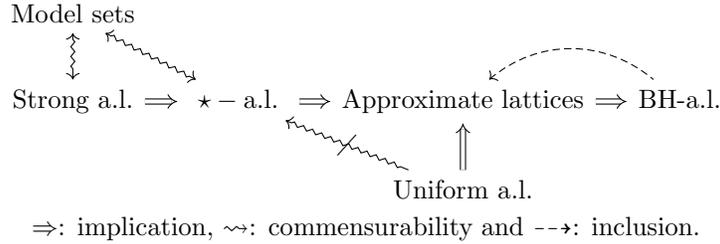

More precisely, the original results we obtain here are: 

\begin{theorem}\label{Proposition: Detail resultat principal}
Let $G$ be a second countable locally compact group.
\begin{enumerate}
\item Let $\Lambda$ be a $\star$-approximate lattice in $G$. Then $\Lambda^8$ contains a model set.
\item There is an approximate lattice that is not commensurable with a $\star$-approximate lattice.
\item An approximate lattice in $G$ is a BH-approximate lattice.
\item Let $\Lambda$ be a BH-approximate lattice in $G$. Then $\Lambda$ is contained in (but not necessarily commensurable with) an approximate lattice $\Lambda^*$ in $\overline{\Comm_{G}(\Lambda)}$. 

\end{enumerate}
\end{theorem}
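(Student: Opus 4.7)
The plan is to attack the four parts separately, since they travel in different directions across the hierarchy, and to reuse the cut-and-project technology for three of them. I would work in the order $(3), (1), (4), (2)$, since each later part builds on or borrows from the earlier ones, with $(2)$ needing a genuinely ad hoc construction.

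For part $(3)$, the task is to manufacture a $G$-invariant Radon probability measure on the invariant hull $\overline{G\cdot\Lambda}$ that is non-trivial in the BH sense (i.e.\ places positive mass on the sets $\{P:P\cap V\neq\emptyset\}$ for small neighbourhoods $V$ of the identity). Given a finite-Haar-measure set $\mathcal{F}$ with $\Lambda\mathcal{F}=G$, I would normalise Haar on $\mathcal{F}$, push it forward through $g\mapsto g^{-1}\Lambda$, and average by a F\o lner-type family on $G$, or equivalently take a Ces\`aro limit and a weak-$*$ accumulation point; tightness is ensured because uniform discreteness controls the Chabauty topology. The invariance of the limit is standard, and its non-triviality on a neighbourhood of a point set follows by a standard calculation using that $\Lambda$ itself has positive density inside $\mathcal F$.

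For part $(1)$, I would feed the $\star$-invariant measure $\mu$ on the hull into a Mackey-type quasi-model construction (available to us from Hrushovski's work and the earlier sections of the paper): produce a locally compact group $H$, a homomorphism $\tau$ from a suitable enveloping group of $\Lambda$ into $H$, and a lattice $\Gamma$ in $G\times H$ such that $\Lambda$ sits inside a model set attached to $\Gamma$. The $\star$-hypothesis is precisely what upgrades the quasi-model to an honest model: it allows the window to be taken relatively compact and symmetric, because the existence of the $\star$-measure excludes the pathological thickening that only yields a quasi-model in the general case. Tracking the constants through the construction lands the resulting model set inside $\Lambda^8$ (the power $8$ arising from a few applications of the approximate-subgroup inequality $\Lambda^n\subset F_n\Lambda$ together with the symmetry $\Lambda=\Lambda^{-1}$). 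Part $(4)$ is a softer version of the same argument: given a BH-approximate lattice, run the quasi-model construction with only the BH-measure available, obtain a lattice $\Gamma\subset G\times H$ and model set $\Lambda^*$ associated to some window $W$, and observe that elements of $\Lambda^*$ commensurate $\Lambda$ (since translation of $\Lambda$ by such an element differs from $\Lambda$ only by a finite set coming from the compactness of $W$), so $\Lambda^*\subset\overline{\Comm_G(\Lambda)}$.

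The main obstacle, and the place I would spend most of the real work, is part $(2)$: constructing an approximate lattice which is not even commensurable with any $\star$-approximate lattice. The approach I would try is to start with a well-chosen non-uniform lattice $\Gamma_0$ in a semisimple group $G_0$ and take a fibered product with a discrete group $D$ acting so that the resulting approximate subgroup $\Lambda\subset G_0\times D$ has $\Lambda\mathcal F=G$ for a finite-measure $\mathcal F$, while the dynamics of the $D$-factor destroys any Radon $G$-invariant measure on the invariant hull of the special form required by the $\star$-definition. Concretely, I would aim to produce an example where every candidate invariant measure on $\overline{G\cdot\Lambda}$ either fails to integrate the relevant test functions or loses the absolute-continuity / non-escaping property built into the $\star$-axiom, and then exclude passage to a commensurable set by showing the defect persists under finite unions of translates. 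Verifying rigorously that commensuration cannot repair the example is the delicate point, and I expect to use the results of part $(4)$ — that any $\star$-approximate lattice is sandwiched by an honest approximate lattice carrying a model-set substructure — to derive an obstruction which $\Lambda$ is explicitly designed to violate.
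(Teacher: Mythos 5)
Your proposal has the right general flavour in two places but contains genuine gaps, and in one place proves too much. The most serious gap is in part (1). You assert that ``the $\star$-hypothesis is precisely what upgrades the quasi-model to an honest model,'' but you give no mechanism for how the invariant measure on $\Omega_\Lambda^{ext}$ kills the quasi-morphism defect --- and this is the entire content of the theorem, since the known non-laminar approximate lattices are exactly those whose quasi-model has an essential defect. The paper does not go through quasi-models at all for part (1): it restricts the invariant measure to the canonical transverses $\tilde{\mathcal{T}}_g=\{X\in\Omega_\Lambda: X\cap g\Lambda\neq\emptyset\}$ to obtain a finitely additive $\langle\Lambda\rangle$-invariant measure $m$ on a $\langle\Lambda\rangle$-space with $0<m(\Lambda\cdot\tilde{\mathcal{T}}_e)<\infty$, and then runs a Massicot--Wagner-type argument (the nested-stabiliser construction $X_{n+1}=X_n\cap X_ng$ with near-minimal $m(X_n^{-1}\cdot B)$) to produce the decreasing chain of commensurable approximate subgroups that characterises the existence of a good model. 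The exponent $8$ comes from identifying the stabiliser of $\Lambda\cdot\tilde{\mathcal{T}}_e$ inside $\Lambda^8$, not from shuffling covering constants in a cut-and-project scheme. Without some substitute for this amenability-style argument your part (1) is not a proof.

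In part (4) your conclusion is too strong: you claim to extract a lattice $\Gamma\subset G\times H$ and a genuine \emph{model set} $\Lambda^*\supset\Lambda$. Combined with part (3) this would make every approximate lattice laminar, contradicting the non-laminar examples that part (2) relies on. The quasi-model only yields the approximate subgroup $\Gamma_{f,K}$ of $G\times H$ (the graph of $f$ thickened by the defect $K$), and the set $\Lambda^*=f^{-1}(W^*K^*)$ one obtains is an approximate lattice, not a model set. Moreover the real work in (4) is showing $\Lambda^*$ has finite co-volume: the paper transports the $\mu$-stationary measure from $\Omega_\Lambda$ to $(G\times H)/L_0$ via a topological coupling, upgrades stationarity to invariance using ergodicity of the $G$-action on that homogeneous space, and only then runs a fundamental-domain count. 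Your proposal does not use the BH-measure anywhere in establishing finite co-volume. In part (3), the construction you sketch (push Haar on $\mathcal{F}$ to the hull, take Ces\`aro limits of $\mu^{*i}*\nu$) is essentially the paper's, but the crux is properness of the weak-$*$ limit, which can a priori concentrate on $\emptyset$; the paper secures it via the two-sided bound $C^{-1}\mu_G\leq\mathcal{P}_\Lambda^*\nu\leq C\mu_G$, which is preserved under convolution by $\mu$ (this uses unimodularity of $G$), and a F{\o}lner family is unavailable since $G$ need not be amenable. Finally, part (2) does not require any new construction: once (1) is proved, any approximate lattice commensurable with a $\star$-approximate lattice is laminar, so the non-laminar approximate lattices of Hrushovski already furnish the example; your proposed bespoke example, whose delicate step you acknowledge you cannot complete, is unnecessary.
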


The proof of Theorem \ref{Proposition: Detail resultat principal} intertwines in an essential way the combinatorial tools arising from combinatorial and model-theoretic ideas developed for instance in \cite{hrushovski2020beyond, hrushovski2019amenability,  machado2019goodmodels, MR3345797} and the ergodic-theoretic tools from \cite{bjorklund2016approximate, machado2020apphigherrank}. This paper is the first one - to the knowledge of the author - where these arguments are so tightly knit together. As such, it offers a panorama of the methods encountered while studying approximate lattices and we hope it will help create a bridge between the two languages.

Let us briefly comment Theorem \ref{Proposition: Detail resultat principal}.  Part (1) is a generalisation of Meyer's theorem \cite{meyer1972algebraic} valid for $\star$-approximate lattices in all locally compact second countable groups. This complements recent papers working towards generalisations of Meyer's theorem \cite{bjorklund2016approximate,hrushovski2020beyond,mac2023structure,machado2020apphigherrank} and achieves a complete generalisation for $\star$-approximate lattices. The proof relies on the combination of two tool-sets.  It first uses transverse measure on cross-sections to establish that $\Lambda$ satisfies an amenability-like condition.  It is then combined with an extension of the Massicot--Wagner argument for actions in the spirit of \cite{hrushovski2019amenability}.  

Part (2) shows that approximate lattices span more commensurability classes than $\star$-approximate lattices.  Part (2) is then a simple consequence of the existence of non-laminar approximate lattices \cite{hrushovski2020beyond} and Part (1). From a slightly tweaked construction,  examples with finer properties can be exhibited, we refer the interested reader to Proposition \ref{Proposition: laminar app lattice not close to a strong app lattice} below. 

Part (3) creates a bridge between approximate lattices in the sense of Hrushovski and the measure theory of the invariant hull, showing that approximate lattices are in particular BH-approximate lattices.  Notice that approximate lattices are not defined via a condition on the invariant hull and it is \emph{a priori} not clear how to relate the two notions. We use \emph{periodization maps} - a tool coming from the Siegel transform \cite{10.2307/1969027} first used in \cite{bjorklund2016approximate} that counts points of a random translate of $\Lambda$ in a given Borel subset -, the existence of a `fundamental domain' and a Hahn--Banach argument to build measures on the hull of an approximate lattice.  

Part (4) provides a partial converse to part (3).  The proof relies on the notion of quasi-models and the proof of the extension of Meyer's theorem for semi-simple Lie groups due to Hrushovski \cite[Thm. 7.4]{hrushovski2020beyond} and an argument about joinings coming from \cite{bjorklund2019borel}.

Concerning the structure of approximate lattices up to commensurability, we believe that Figure \ref{Figure: Hierarchy of approximate lattices} indicates that Hrushovski's definition of approximate lattices is the most suitable for investigations.  Nevertheless, even when investigating the structure of approximate lattices up to commensurability, strong approximate lattices show up naturally, see \cite{mac2023structure}.

\subsection{Laminar approximate lattices far from model sets}
Even for approximate subgroups commensurable to actual lattices, the various notions represent various levels of generality. 

\begin{proposition}\label{Proposition: laminar app lattice not close to a strong app lattice}
Let $\Gamma$ be a lattice in a rank one simple Lie group and $m \geq 0$ be an integer. There is an approximate lattice $\Lambda \subset \Gamma$ such that $\Lambda^m$ is not a $\star$-approximate lattice. In particular, $\Lambda^k$ is not a strong approximate lattice for all $1 \leq k \leq m$. 
\end{proposition}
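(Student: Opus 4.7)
The plan is to use Theorem~\ref{Proposition: Detail resultat principal}(1) as an obstruction. If $\Lambda^m$ were a $\star$-approximate lattice, then $\Lambda^{8m}$ would contain a model set $M(G,H,\Gamma',W_0)$; since the window $W_0$ is a symmetric neighbourhood of the identity, such model sets are Delone in $G$. Hence $\Lambda^{8m}$ would be relatively dense in $G$, and it suffices to construct $\Lambda \subset \Gamma$ that is an approximate lattice in $G$ but for which $\Lambda^{8m}$ admits arbitrarily large metric ``holes.''

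For the construction, I would exploit the $\delta$-hyperbolicity of the rank-one symmetric space $X = G/K$. Choose a lacunary sequence $(g_n)_n\subset\Gamma$ escaping to infinity along a fixed geodesic ray from a base point $o\in X$, with gap ratios $d(o, g_{n+1}\cdot o)/d(o,g_n\cdot o)\to\infty$, together with radii $r_n\to\infty$ growing moderately (in particular $r_n=o(d(g_n\cdot o, g_{n+1}\cdot o))$). Set $\Lambda := \Gamma \setminus S$, where $S$ is the symmetric union of the sets $\Gamma\cap B(g_n, r_n)$. Three properties must then be verified. First, that $\Lambda$ is an approximate subgroup: the balls $\Gamma\cap B(g_n,r_n)$ are sparsely placed and of bounded ``shape,'' so that $\Lambda^2 \subset F\Lambda$ for a finite set $F\subset\Gamma$ obtained from a single representative per local shape of hole. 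Second, that $\Lambda$ has finite covolume, via $\Lambda \cdot(F\mathcal D) = G$ for $\mathcal D$ a fundamental domain of $\Gamma$. Third, the crux, that $\Lambda^{8m}$ fails to be relatively dense.

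For the third point, the key geometric input is that in a $\delta$-hyperbolic space any factorisation $\lambda_1\cdots\lambda_{8m}$ whose image is within $r_n/2$ of $g_n\cdot o$ forces the sequence of partial products $\lambda_1\cdots\lambda_i\cdot o$ to quasi-follow the geodesic from $o$ to $g_n\cdot o$ to within $O(8m\cdot\delta)$. Pigeonholing these $8m$ partial products along a geodesic of length $\sim d(o,g_n\cdot o)$ against the removed balls $B(g_k,r_k)$ for $k\leq n$ forces at least one partial product to land inside some removed ball, contradicting $\lambda_1\cdots\lambda_i\in\Gamma\setminus S$. Taking $r_n$ much larger than $8m\cdot\delta$ then produces balls of radius tending to infinity around the $g_n\cdot o$'s that are disjoint from $\Lambda^{8m}\cdot o$.

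The main obstacle is reconciling the first and third requirements: the removed set $S$ must be sparse and geometrically coherent for $\Lambda$ to remain an approximate subgroup, yet substantial enough to survive $8m$-fold products. Rank-one hyperbolicity is precisely what makes this reconciliation possible, since it forbids products from ``circumnavigating'' the removed balls through alternative geodesic paths---a phenomenon that would be available in higher rank through the Weyl-chamber structure, and which explains the rank-one hypothesis. Tuning the lacunarity of $(g_n)$ against the growth of $(r_n)$ relative to the $\delta$-constant and the integer $8m$ is the main technical challenge.
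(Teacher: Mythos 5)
Your reduction to Theorem~\ref{Proposition: Detail resultat principal}(1) is the right first move (it is also the paper's), but the obstruction you then aim for --- failure of relative density of $\Lambda^{8m}$ --- cannot work, and the construction built to realise it cannot exist. The fatal point is Lemma~\ref{Corollary: Towers of star-approximate subsets are commensurable}: any approximate lattice $\Lambda$ contained in the lattice $\Gamma$ is automatically commensurable with $\Gamma$, so $\Gamma\subset \Lambda F$ for some finite $F$. Working with the left-invariant metric $d(x,y)=d_X(x\cdot o,y\cdot o)$, right multiplication by a fixed $f\in F$ moves every point by at most $R:=\max_{f}d_X(f\cdot o,o)$, so every element of $\Gamma$ lies within distance $R$ of $\Lambda$ --- and a fortiori of $\Lambda^{8m}$. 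Hence no power of an approximate lattice contained in $\Gamma$ can have ``arbitrarily large holes'' relative to $\Gamma$; your set $\Gamma\setminus S$ with $d(g_n,\Lambda)>r_n\to\infty$ is therefore simply not an approximate lattice, no matter how the $g_n$ and $r_n$ are tuned. (Two further problems: model sets coming from non-uniform cut-and-project lattices are not Delone, so ``contains a model set $\Rightarrow$ relatively dense'' already fails; and the geodesic-tracking claim is false --- the partial products of a factorisation $\lambda_1\cdots\lambda_{8m}$ are just $8m$ arbitrary points of $\Gamma\setminus S$ with no quasi-geodesic constraint, so even for $8m=2$ they easily jump over the removed balls and $\Lambda^2$ fills the holes back in.)

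The genuine content of the proposition is that the obstruction must be invisible to coarse geometry: the $\Lambda$ produced is commensurable with $\Gamma$, hence laminar and metrically indistinguishable from $\Gamma$, yet the \emph{specific power} $\Lambda^{8m}$ contains no model set. The paper detects this with quasimorphisms, which is also where the rank-one hypothesis really enters (existence of quasimorphisms $f:\Gamma\to\mathbb{R}$ not at bounded distance from a homomorphism, via Bestvina--Bromberg--Fujiwara --- not via $\delta$-hyperbolicity of products). One arithmetises such an $f$ to take values in $\mathbb{Z}[\sqrt2]$ and forms $\{(\gamma,x)\in\Gamma\times\mathbb{Z}[\sqrt2]:|f(\gamma)-\tau(x)|\le 1\}$, a non-laminar approximate lattice in $G\times\mathbb{R}$; the desired $\Lambda\subset\Gamma$ is the projection of a thin slice of it. If some power of $\Lambda$ contained a model set, a local Freiman-homomorphism lifting argument would transport the good model upstairs and force the quasimorphism construction to be laminar, a contradiction. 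If you want to salvage your write-up, replace the metric-hole mechanism by this quasimorphism mechanism; the rest of your framing (reduce via Theorem~\ref{Proposition: Detail resultat principal}(1) to showing $\Lambda^{8m}$ contains no model set) can stay.
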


Proposition \ref{Proposition: laminar app lattice not close to a strong app lattice} is particularly striking in the light of the following heuristic from additive combinatorics: given an approximate subgroup $A$, each power $A^{n+1}$ has more structure than $A^n$ - see Lemma \ref{Lemma: Intersection of approximate subgroups} for an elementary instance of that phenomenon. This heuristic transcribes particularly well for approximate lattices in \emph{amenable groups}: if $\Lambda$ is an approximate lattice and $G$ is amenable, then $\Lambda^4$ already contains a model set \cite{machado2019goodmodels}. Proposition \ref{Proposition: laminar app lattice not close to a strong app lattice} shows that this fails dramatically in non-amenable ambient groups and can be loosely rephrased by stating that there are laminar approximate lattices as far as one wants from a model set. Our proof however crucially uses the rank one assumption. We therefore raise the following question: 

\begin{question}
Let $G$ be a simple Lie group of rank at least $2$. Is there $m > 0$, such that for every approximate lattice $\Lambda \subset G$, $\Lambda^m$ contains a model set?
\end{question}

\subsection{Outline of the paper}

Our paper starts with recording a number of preliminary notions and results. In \S \ref{Section: The definitions of approximate lattices} we discuss elementary properties of approximate subgroups, good models and quasi-models, we introduce the notion of uniform discreteness and define the invariant hull and properties pertaining to it. In \S \ref{Section: The definitions of approximate lattices} we go on and provide in detail all the definitions of approximate lattices along with a survey of elementary results useful later on. We prove Theorem \ref{Proposition: Detail resultat principal} (1) in \S \ref{Subsection: Laminarity of strong approximate lattices and transverse measures}.  We then show Theorem \ref{Proposition: Detail resultat principal} (3) in \ref{Subsection: Invariant hull, strong approximate lattices and star-approximate lattices}. In \S \ref{Section: Quasi-models of uniformly discrete approximate subgroups},  we prove Theorem \ref{Proposition: Detail resultat principal} (4).  Finally, in \S \ref{Subsection: Consequences of the existence of non-laminar approximate lattices} we establish both Theorem \ref{Proposition: Detail resultat principal} (2) and Proposition \ref{Proposition: laminar app lattice not close to a strong app lattice}. 

\subsection{Acknowledgements}
I am grateful to Tobias Hartnick and Michael Bj\"{o}rklund for interesting discussions about various parts of this work.  This material is based upon work supported by the National Science Foundation under Grant No. DMS-1926686.

\section{Preliminaries}\label{Section: The definitions of approximate lattices}

\subsection{Approximate subgroups and commensurability}
A notion closely related to that of approximate subgroups is commensurability. 

\begin{definition}\label{Definition: Commensurability}
Let $X,Y \subset G$ be two subsets of a group. We say that: 
\begin{enumerate}
\item $X$ is \emph{covered by finitely many left (resp. right) translates of} $Y$ if there is $F \subset G$ finite such that $X \subset FY$ (resp. $X \subset YF$);
\item $X$ is ($K$-)commensurable with $Y$ if there is $F \subset G$ finite ($|F| \leq K$) such that $X \subset FY \cap YF$ and $Y  \subset FX \cap XF$;
\item $g \in G$ \emph{commensurates} $X$ if $gXg^{-1}$ and $X$ are commensurable. Then $\Comm_G(X) \subset G$ denotes the subgroup of elements commensurating $X$;
\item $\alpha \in \Aut(G)$ \emph{commensurates} $X$ if $\alpha(X)$ and $X$ are commensurable.
\end{enumerate}
\end{definition}

We collect now two useful results concerning approximate subgroups and commensurability. 

\begin{lemma}[e.g. \cite{machado2019goodmodels}]\label{Lemma: Intersection of commensurable sets}
Take $X,Y_1,\ldots Y_n$ subsets of a group $G$. Assume that there exist $F_1, \ldots, F_n \subset G$ finite such that $X \subset F_iY_i$ for all $i \in \{1,\dots, n\}$. Then there is $F' \subset X$ with $|F'| \leq |F_1|\cdots |F_n|$ such that $$X \subset F'\left( Y_1^{-1}Y_1 \cap \cdots \cap Y_n^{-1}Y_n\right).$$ 
\end{lemma}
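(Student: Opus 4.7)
The plan is to use a pigeonhole argument on the product space $F_1 \times \cdots \times F_n$. The key observation is that for each $x \in X$ and each $i$, the hypothesis $X \subset F_i Y_i$ lets us pick some element $f_i(x) \in F_i$ with $x \in f_i(x) Y_i$ (using the axiom of choice if needed). This assembles into a map
\[
\varphi \colon X \longrightarrow F_1 \times \cdots \times F_n, \qquad x \longmapsto (f_1(x), \ldots, f_n(x)).
\]
The image of $\varphi$ has cardinality at most $|F_1|\cdots|F_n|$, and so we may select a set $F' \subset X$ of representatives, one from each non-empty fibre of $\varphi$, with $|F'| \leq |F_1|\cdots|F_n|$.

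Next, I would check that this $F'$ witnesses the desired covering. Given any $x \in X$, pick $x' \in F'$ lying in the same fibre of $\varphi$ as $x$, so that $f_i(x) = f_i(x') =: g_i$ for every $i$. Then for each index $i$ we have both $x \in g_i Y_i$ and $x' \in g_i Y_i$, and therefore
\[
(x')^{-1} x \in Y_i^{-1} g_i^{-1} g_i Y_i = Y_i^{-1} Y_i.
\]
Since this holds simultaneously for all $i \in \{1,\ldots,n\}$, we conclude $(x')^{-1} x \in Y_1^{-1} Y_1 \cap \cdots \cap Y_n^{-1} Y_n$, which rearranges to $x \in F'(Y_1^{-1} Y_1 \cap \cdots \cap Y_n^{-1} Y_n)$, as required.

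There is essentially no obstacle here: the argument is a clean pigeonhole once the selection map $\varphi$ is set up. The only mild subtlety is to remember to take the representatives $F'$ from inside $X$ itself (rather than from $F_1 \times \cdots \times F_n$), which is what allows the conclusion $F' \subset X$; this is ensured automatically by choosing representatives within the fibres of $\varphi$.
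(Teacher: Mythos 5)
Your proof is correct and is the standard pigeonhole/Ruzsa-type covering argument; the paper itself only cites this lemma from \cite{machado2019goodmodels} without reproducing a proof, and your argument is exactly the expected one. The one subtlety you flag—choosing the representatives $F'$ inside $X$ from the fibres of $\varphi$—is indeed the point that guarantees $F' \subset X$, and you handle it correctly.
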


\begin{lemma}[e.g. \cite{machado2019goodmodels}]\label{Lemma: Intersection of approximate subgroups}
Let $\Lambda_1, \ldots, \Lambda_n$ be $K_1,\ldots,K_n$-approximate subgroups of some group. We have: 
\begin{enumerate}
\item if $k_1, \ldots, k_n \geq 2$, then there is $F$ with $|F|\leq K_1^{k_1-1} \cdots K_n^{k_n-1}$ such that $$\Lambda_1^{k_1} \cap \cdots \cap \Lambda_n^{k_n} \subset F\left(\Lambda_1^2 \cap \cdots \cap \Lambda_n^2\right);$$ 
\item if $k_1, \ldots, k_n \geq 2$, then $\Lambda_1^{k_1} \cap \cdots \cap \Lambda_n^{k_n}$ is a $K_1^{2k_1-1}\cdots K_n^{2k_n-1}$-approximate subgroup;
\item if $\Lambda_1', \ldots, \Lambda_n'$ is a family of approximate subgroups such that $\Lambda_i'$ is commensurable with $\Lambda_i$ for all $1 \leq i \leq n$, then $\Lambda_1'^2 \cap \cdots \cap \Lambda_n'^2$ is commensurable with $\Lambda_1^2\cap \cdots \cap \Lambda_n^2$.
\end{enumerate}
\end{lemma}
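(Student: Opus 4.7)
For Part (1), the plan is to reduce everything to a single application of Lemma \ref{Lemma: Intersection of commensurable sets}. First I would upgrade the defining inequality $\Lambda^2 \subset F\Lambda$ into the bound $\Lambda^k \subset F_k\Lambda$ with $|F_k| \leq K^{k-1}$ for any $K$-approximate subgroup $\Lambda$; this is a routine induction, $\Lambda^{k+1} = \Lambda^k\cdot\Lambda \subset F_k\Lambda^2 \subset F_kF\Lambda$. Applied to each $\Lambda_i$, this yields $\Lambda_i^{k_i} \subset E_i\Lambda_i$ with $|E_i| \leq K_i^{k_i-1}$, so that $\Lambda_1^{k_1}\cap\cdots\cap\Lambda_n^{k_n} \subset E_i\Lambda_i$ for every $i$. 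Invoking Lemma \ref{Lemma: Intersection of commensurable sets} with $X = \bigcap_i\Lambda_i^{k_i}$ and $Y_i = \Lambda_i$ produces an $F \subset X$ of size at most $\prod_i K_i^{k_i-1}$ with $X \subset F\bigl(\bigcap_i \Lambda_i^{-1}\Lambda_i\bigr)$; the symmetry of each $\Lambda_i$ turns $\Lambda_i^{-1}\Lambda_i$ into $\Lambda_i^2$, which is exactly the claimed inclusion.

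For Part (2), set $A = \bigcap_i\Lambda_i^{k_i}$. Symmetry of $A$ and the fact that $e \in A$ are immediate from the analogous properties of each $\Lambda_i$. Since $A^2 \subset \bigcap_i\Lambda_i^{2k_i}$, Part (1) applied with the exponents $2k_i$ gives $A^2 \subset F\bigl(\bigcap_i\Lambda_i^2\bigr)$ with $|F| \leq \prod_i K_i^{2k_i-1}$. The hypothesis $k_i \geq 2$ ensures $\bigcap_i\Lambda_i^2 \subset A$, so $A^2 \subset FA$, and $A$ is an approximate subgroup of the claimed type.

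For Part (3), the natural approach is to route the commensurability through the common refinement $C := \bigcap_i\Lambda_i^2 \cap \bigcap_i(\Lambda_i')^2$. By transitivity it suffices to show that both $A := \bigcap_i\Lambda_i^2$ and $B := \bigcap_i(\Lambda_i')^2$ are commensurable with $C$. The inclusion $C \subset A$ is trivial; for the reverse direction I would apply Lemma \ref{Lemma: Intersection of commensurable sets} with the enlarged list $\Lambda_1,\ldots,\Lambda_n,\Lambda_1',\ldots,\Lambda_n'$. The covering $A \subset E_i\Lambda_i$ is immediate from $\Lambda_i^2 \subset E_i\Lambda_i$, while $A \subset E_iG_i\Lambda_i'$ follows by chaining this with the commensurability inclusion $\Lambda_i \subset G_i\Lambda_i'$. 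The lemma then delivers a finite $F' \subset A$ with $A \subset F'C$, and swapping primed and unprimed data handles $B$ symmetrically. The corresponding right-translate inclusions required by Definition \ref{Definition: Commensurability} follow from the symmetry of $A$, $B$ and $C$.

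The most delicate point, and the one I expect to require the most attention, is the choice in Part (3) of the auxiliary sets $Y_i$ in Lemma \ref{Lemma: Intersection of commensurable sets}: one must take $Y_i = \Lambda_i$ rather than $Y_i = \Lambda_i^2$, so that the output $\bigcap_i Y_i^{-1}Y_i$ lands on $\Lambda_i^2$ and not on $\Lambda_i^4$. The same care is what makes Part (1) sharp in its constants, since using $\Lambda_i^{k_i} \subset E_i\Lambda_i$ (rather than the weaker $\Lambda_i^{k_i} \subset E_i\Lambda_i^2$) is what yields the exponent $K_i^{k_i-1}$ in the conclusion.
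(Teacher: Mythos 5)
Your proof is correct, and it is exactly the standard derivation the paper has in mind (the paper itself only cites \cite{machado2019goodmodels} for this lemma): everything is reduced to Lemma \ref{Lemma: Intersection of commensurable sets} via the covering $\Lambda^k \subset F_k\Lambda$ with $|F_k|\leq K^{k-1}$, and your attention to taking $Y_i=\Lambda_i$ rather than $\Lambda_i^2$ is precisely what produces the stated constants and the target set $\bigcap_i\Lambda_i^2$.
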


\subsection{Models and quasi-models}
We introduce two key notions of the theory of approximate subgroups that will play a central role in what follows. 
\subsubsection{Good models}
Good models are related to a notion of regularity for approximate subgroups. 

\begin{definition}
An approximate subgroup $\Lambda$ of a group $G$ has a \emph{good model} if there is a group homomorphism $f: \langle \Lambda \rangle \rightarrow H$ with target a locally compact group such that $f(\Lambda)$ is relatively compact and $f^{-1}(U) \subset \Lambda$ for some neighbourhood $U$ of $e \in H$. We say that $\Lambda$ is \emph{laminar} if it is commensurable with an approximate subgroup that has a good model. 
\end{definition}

We refer the interested reader to \cite{hrushovski2019amenability}, \cite{{MR3345797}} and \cite{machado2019goodmodels} for accounts and surveys regarding good models.  The following criterion for the existence of good models is folklore, see e.g. \cite[Prop. 1.2, Lem. 3.3]{machado2019goodmodels}.

\begin{lemma}\label{Lemma: Charac. good models}
Let $\Lambda$ be an approximate subgroup of some group. Then $\Lambda$ has a good model if and only if there are approximate subgroups $\Lambda_n$ for all $n \geq 0$ such that $\Lambda_n$ is commensurable with $\Lambda$, $\Lambda_{n+1}^2 \subset \Lambda_n$ and $\Lambda_0=\Lambda$.
\end{lemma}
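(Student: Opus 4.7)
The plan is to prove both directions of the equivalence.

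For the forward direction, given a good model $f\colon \langle \Lambda \rangle \to H$ with $f^{-1}(U) \subset \Lambda$ and $\overline{f(\Lambda)}$ compact, I would use local compactness of $H$ to choose a nested basis $V_1 \supset V_2 \supset \cdots$ of symmetric relatively compact open neighborhoods of $e_H$ satisfying $V_1^2 \subset U$ and $V_{n+1}^2 \subset V_n$. Set $\Lambda_0 := \Lambda$ and $\Lambda_n := f^{-1}(V_n)$ for $n \geq 1$. The inclusions $\Lambda_{n+1}^2 \subset \Lambda_n$ and $\Lambda_n \subset \Lambda$ are immediate from the choice of the $V_n$, and each $\Lambda_n$ is symmetric and contains $e$. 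For commensurability of $\Lambda_n$ with $\Lambda$, cover $\overline{f(\Lambda)}$ by finitely many left translates of $V_n$ centered at points of $f(\Lambda)$ and lift the centers to a finite subset $F \subset \Lambda$; this gives $\Lambda \subset F\Lambda_n$, while $\Lambda_n \subset \Lambda$ provides the reverse.

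For the converse, the plan is a Birkhoff--Kakutani-style construction. Declare $\{\Lambda_n\}_{n \geq 1}$ to be a basis of symmetric open neighborhoods of the identity for a topology on $\langle \Lambda \rangle$. Set $N := \bigcap_{n \geq 0} \Lambda_n$; this is a subgroup because for any $x,y \in N$ and any $n$ we have $xy \in \Lambda_{n+1}^2 \subset \Lambda_n$, so $N\cdot N \subset N$. Once the topology is shown to be a genuine group topology, $N$ is the closure of $\{e\}$ and hence normal; I would pass to the Hausdorff quotient $\langle \Lambda \rangle / N$, take its completion $H$, and let $f\colon \langle \Lambda \rangle \to H$ be the canonical map. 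That $f(\Lambda)$ is relatively compact follows from commensurability of $\Lambda$ with $\Lambda_1$: $f(\Lambda)$ is covered by finitely many translates of $\overline{f(\Lambda_1)}$. The condition $f^{-1}(V) \subset \Lambda$ holds for the open neighborhood $V \subset H$ whose pullback is $\Lambda_1 N \subset \Lambda_1 \Lambda_2 \subset \Lambda_0 = \Lambda$.

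The main obstacle is verifying that the proposed topology really is a \emph{group} topology — i.e., that for every $g \in \langle \Lambda \rangle$ and every $n$ there exists $m$ with $g\Lambda_m g^{-1} \subset \Lambda_n$ (ensuring continuity of conjugation and normality of $N$). The key ingredient is that commensurability of $\Lambda_k$ with $\Lambda$ is preserved under conjugation by any fixed $g$, with a finite witness $F_g$ depending only on $g$ and not on $k$; combining this with the power-squeezing $\Lambda_{k+1}^2 \subset \Lambda_k$ and Lemmas \ref{Lemma: Intersection of commensurable sets} and \ref{Lemma: Intersection of approximate subgroups} — which control how intersections of commensurable approximate subgroups behave — allows one to refine the sequence (if necessary, replacing each $\Lambda_n$ by a commensurable approximate subgroup such as $\Lambda_n^2 \cap g_1 \Lambda_{h(n)}^2 g_1^{-1} \cap \cdots$ over a cofinal family of generators) and then pick $m$ large enough that $g\Lambda_m g^{-1}$ lies entirely in $\Lambda_n$. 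A secondary concern is verifying that $H$ is locally compact rather than merely complete, which reduces to showing that $\Lambda_1$ is totally bounded in the uniformity induced by $\{\Lambda_n\}$ — but this again is a direct consequence of commensurability with each $\Lambda_n$.
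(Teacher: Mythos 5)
Your forward direction is correct and standard; the only points left implicit (that each $\Lambda_n=f^{-1}(V_n)$ is itself an approximate subgroup, and that the cover of $\overline{f(\Lambda)}$ must use translates centred at points of $f(\Lambda)$ at the cost of doubling $V_{n+1}$ into $V_n$) are routine. The converse, however, has a genuine gap exactly where you locate it, and the proposed repair does not close it. Declaring $\{\Lambda_n\}$ a neighbourhood basis of $e$ yields a \emph{group} topology only if for every $g\in\langle\Lambda\rangle$ and every $n$ there is $m$ with $g\Lambda_m g^{-1}\subset\Lambda_n$. Commensurability only ever gives $g\Lambda_m g^{-1}\subset F\Lambda_n$ for a finite $F$ --- a covering, never a containment --- and no amount of power-squeezing converts one into the other. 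Intersecting each $\Lambda_n$ with finitely many conjugates $g_i\Lambda_{h(n)}^2g_i^{-1}$ does preserve commensurability (Lemma \ref{Lemma: Intersection of approximate subgroups}), but the refined sequence must then satisfy the conjugation condition for the new conjugators $g_j^{-1}g_i$ it creates, so the bookkeeping forces an intersection over all of $\langle\Lambda\rangle$ (whose generating set $\Lambda$ is in general infinite); Lemma \ref{Lemma: Intersection of approximate subgroups} gives no control over infinite intersections, and the natural candidate --- the normal core of $N=\bigcap_n\Lambda_n$ --- is useless since $N$ is typically trivial. Note that the converse cannot be soft: any good model pulls back a conjugation-invariant neighbourhood basis of $H$ to a chain that \emph{does} satisfy the conjugation condition, so the whole content of this direction is the upgrade from an arbitrary chain to a conjugation-compatible one.

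The paper does not prove the lemma (it cites \cite{machado2019goodmodels}), but the standard argument avoids topologizing $\langle\Lambda\rangle$ altogether, and this is the step you would need to import. From the chain one builds, \`{a} la Birkhoff--Kakutani, a left-invariant pseudo-metric $d$ on $\langle\Lambda\rangle$ with $d(g,e)\leq 2^{-n}$ essentially equivalent to $g\in\Lambda_n$; commensurability makes every ball totally bounded, so the completion $X$ of the quotient $(\langle\Lambda\rangle/N,d)$ is a proper metric space; $\langle\Lambda\rangle$ acts on $X$ by left translations, which are isometries precisely because $d$ is left-invariant --- no conjugation control is needed --- and the closure $H$ of the image in $\Isom(X)$ is locally compact. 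The resulting homomorphism $f\colon\langle\Lambda\rangle\to H$ satisfies $f^{-1}\left(\{\phi: d(\phi(x_0),x_0)<\varepsilon\}\right)\subset\Lambda_1^2\subset\Lambda$ for suitable $\varepsilon$, and $f(\Lambda)$ is relatively compact by Arzel\`{a}--Ascoli. As written, the second half of your argument does not go through without this (or an equivalent) device.
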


The next result relates good models and the model sets defined in the introduction:

\begin{lemma}[\cite{machado2019goodmodels}]\label{Lemma: good models and model sets}
Let $\Lambda$ be an approximate lattice in a locally compact group $G$. Then $\Lambda$ has a good model if and only if it contains a model set. 
\end{lemma}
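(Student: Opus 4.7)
The plan is to prove the two implications separately via the cut-and-project formalism. For the direction ``contains a model set $\Rightarrow$ has a good model'', let $P := M(G,H,\Gamma,W_0) \subseteq \Lambda$. After reducing to the case $\Gamma \cap (\{e\} \times H) = \{e\}$ (by quotienting $H$ by this closed subgroup if necessary), the restriction $p_G|_\Gamma$ is injective, so the map $\tau := p_H \circ (p_G|_\Gamma)^{-1}$ is a well-defined homomorphism from $p_G(\Gamma) \supseteq \langle P \rangle$ into $H$ with $\tau(P) \subseteq W_0$ relatively compact and $\tau^{-1}(V) \subseteq P \subseteq \Lambda$ whenever $V \subseteq W_0$ is a neighbourhood of $e$. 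This exhibits $P$ itself as having a good model; to promote this to a good model on $\langle \Lambda \rangle$, I would exploit the commensurability of $P$ with $\Lambda$ (which holds because both are approximate lattices sharing the same ambient density) together with Lemma \ref{Lemma: Charac. good models}: a nested sequence $P = P_0 \supseteq P_1^2 \supseteq \cdots$ witnessing a good model of $P$ can be intersected with powers of $\Lambda$ via Lemma \ref{Lemma: Intersection of approximate subgroups}(1) to produce a nested sequence for $\Lambda$.

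For the converse, suppose $f : \langle \Lambda \rangle \to H$ is a good model with $f(\Lambda)$ relatively compact and $f^{-1}(U_0) \subseteq \Lambda$ for some neighbourhood $U_0$ of $e \in H$. The idea is to realise $\Lambda$ as the $G$-projection of the strip of a genuine cut-and-project scheme. Form the graph subgroup
\[
\Gamma_0 := \{(g,f(g)) : g \in \langle \Lambda \rangle\} \subseteq G \times H,
\]
let $H' := \overline{f(\langle \Lambda \rangle)}$, and define $\Gamma := \overline{\Gamma_0}$ inside $G \times H'$. If $\Gamma$ turns out to be a lattice in $G \times H'$, then for any relatively compact symmetric open neighbourhood $W \subseteq U_0 \cap H'$ of the identity the associated model set satisfies
\[
M(G, H', \Gamma, W) = p_G\bigl(\Gamma \cap (G \times W)\bigr) \subseteq f^{-1}(U_0) \subseteq \Lambda,
\]
which is the desired model set inside $\Lambda$.

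The main obstacle is verifying that $\Gamma$ is indeed a lattice. Discreteness should follow from uniform discreteness of $\Lambda$: a symmetric neighbourhood $V \subseteq G$ with $V \cap \Lambda^2 = \{e\}$, together with $U_0$, gives a neighbourhood $V \times U_0$ of $(e,e)$ meeting $\Gamma_0$ only at $(e,e)$; shrinking slightly and taking closures transfers this to $\Gamma$. Finite covolume is the delicate step: I would transport the finite-measure Borel set $\mathcal{F} \subseteq G$ with $\Lambda \mathcal{F} = G$ (provided by the approximate lattice hypothesis) to a candidate fundamental domain $\mathcal{F} \times W \subseteq G \times H'$ and verify that its $\Gamma$-translates cover $G \times H'$, using that $\overline{f(\Lambda)}$ is compact and generates a dense subgroup of $H'$. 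The subtle point is controlling what the closure operation adds to $\Gamma_0$ near $\{e\} \times H'$; my expectation is that this reduces, via the approximate-subgroup structure and Lemma \ref{Lemma: Intersection of approximate subgroups}, to a compactness argument in $H'$ so that the covering in the $H'$-direction is handled by $W$ and only finitely many cosets, while the covering in the $G$-direction is handled by $\mathcal{F}$ together with $\Lambda$-translates lifted via $f$.
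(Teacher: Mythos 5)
This lemma is imported from \cite{machado2019goodmodels} and the present paper contains no proof of it, so I can only compare your argument with the standard one from that reference --- which is exactly the cut-and-project argument you reconstruct, in both directions. Your sketch is correct in strategy, and the two places where you hedge can be closed off. For the forward direction, the map $\tau=p_H\circ(p_G|_\Gamma)^{-1}$ is only defined on $\langle P\rangle$, which need not contain $\langle\Lambda\rangle$, so you are right that one cannot directly read off a good model of $\Lambda$ and must pass through Lemma \ref{Lemma: Charac. good models}; but the intersection step is unnecessary: take the nested sequence $P=P_0\supseteq P_1^2\supseteq\cdots$ for $P$ and simply prepend $\Lambda$, since $P_1^2\subseteq P\subseteq\Lambda$ and each $P_n$ is commensurable with $P$, hence with $\Lambda$ by Lemma \ref{Corollary: Towers of star-approximate subsets are commensurable} (this is also where you need the fact, used implicitly, that the model set $P$ is itself an approximate lattice). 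For the converse, your worry about ``what the closure adds to $\Gamma_0$'' is vacuous: once $(V\times U_0)\cap\Gamma_0=\{(e,e)\}$ is established, $\Gamma_0$ is a discrete, hence closed, subgroup of $G\times H'$, so $\Gamma=\Gamma_0$ and the model set computation $p_G(\Gamma\cap(G\times W))=f^{-1}(W)\subseteq\Lambda$ is exact. Finite covolume then goes through with the explicit set $\mathcal{D}=\mathcal{F}\times\bigl(\overline{f(\Lambda)}^{\,-1}W\bigr)$: given $(g,h)$, density of $f(\langle\Lambda\rangle)$ in $H'$ produces $\gamma_1$ with $f(\gamma_1)^{-1}h\in W$, and writing $\gamma_1^{-1}g=\lambda m$ with $\lambda\in\Lambda$, $m\in\mathcal{F}$ gives $(\gamma_1\lambda)^{-1}(g,h)\in\mathcal{D}$, which is of finite Haar measure since $f(\Lambda)$ is relatively compact. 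So the proposal is essentially the proof from the cited source, with only routine tightening needed.
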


 \subsubsection{Quasi-homomorphisms as models}\label{Subsubsection: Quasi-homomorphisms as models}
 Unfortunately, not all approximate subgroups have good models. Counter-examples were found in \cite{hrushovski2019amenability}, \cite{machado2019goodmodels} and \cite{hrushovski2020beyond} and all involved the notion of \emph{quasi-morphism} in an essential way. A deep result of Hrushovski asserts that this is not a coincidence.
 
 \begin{theorem}[Quasi-model theorem, Thm 4.1 \cite{hrushovski2020beyond}]\label{Theorem: Hrushovski's quasi-models}
 Let $\Lambda$ be an approximate subgroup of some group. There is a quasi-homomorphism $f: \langle \Lambda \rangle \rightarrow H$ and a compact normal subset $K \subset H$ such that: 
 \begin{enumerate}
 \item $\{f(\gamma_1\gamma_2)f(\gamma_2)^{-1}f(\gamma_1)^{-1} : g,h \in \gamma\} \subset K$;
 \item for all relatively compact neighbourhoods of the identity $W \subset H$, $\Lambda$ is commensurable with $f^{-1}(WK)$.

 \end{enumerate}
 Moreover, by restricting $f$ to a finite index subgroup $\Gamma_0 \subset \Gamma$ we may assume that $A:=\overline{\langle K \rangle}$ is isomorphic to a Euclidean space on which $G$ acts by isometries and $f(\Gamma_0)A$ is dense in $H$. We will call such a map a \emph{quasi-model (of $(\Lambda,\Gamma)$)}.
 \end{theorem}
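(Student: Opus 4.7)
The plan is to bootstrap the quasi-model theorem from Hrushovski's earlier Lie model (stabilizer) theorem for approximate subgroups, via a globalization procedure that converts a local, type-definable model into a global quasi-homomorphism at the cost of introducing the bounded error set $K$.

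I would first set up the ultraproduct. Consider a non-principal ultrapower of the pair $(\langle \Lambda \rangle, \Lambda)$, so that $\Lambda^*$ becomes an internal ``pseudo-finite'' approximate subgroup. Hrushovski's Lie model theorem then produces a type-definable subgroup $S \subset (\Lambda^*)^4$ commensurable with $\Lambda^*$, together with a surjective group homomorphism $\pi \colon S \to L$ onto a connected locally compact group $L$ whose preimage of any identity neighbourhood lies inside $(\Lambda^*)^2$, and such that $\pi(\Lambda^* \cap S)$ is a relatively compact neighbourhood of $e \in L$. This is the ``local'' model.

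Next I would globalize. The map $\pi$ only makes sense inside the ultrapower; to produce a global map $f$ on $\langle \Lambda \rangle$, choose a Borel section $\sigma$ of $\pi$ over a compact identity neighbourhood of $L$. For $\gamma \in \langle\Lambda\rangle$, write $\gamma$ as a product of elements of $\Lambda$ and lift step-by-step through $\sigma$ to obtain an element $f(\gamma)$ inside a locally compact group $H$ assembled from $L$ together with the residual error terms. The obstruction to multiplicativity $f(\gamma_1\gamma_2)f(\gamma_2)^{-1}f(\gamma_1)^{-1}$ measures the failure of $\sigma$ to be a homomorphism and visibly takes values in a compact set $K \subset H$. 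The commensurability of $\Lambda$ with $f^{-1}(WK)$ then follows by combining the Lie model's control of $\pi^{-1}(W)$ with Lemma \ref{Lemma: Intersection of approximate subgroups}.

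To finish, I would arrange normality and the Euclidean refinement. Replacing $K$ by the union of its conjugates under $f(\langle \Lambda \rangle)$ yields normality, and $K$ stays compact after passing to a finite-index subgroup $\Gamma_0$ on which the conjugation action can be controlled. To make $A := \overline{\langle K \rangle}$ a Euclidean space carrying an isometric $H$-action, one exploits that $K$ arises as the cocycle of a section and therefore lands in a central piece of $H$; this forces $A$ to be abelian, and quotienting out maximal compact and torsion components (again at the cost of finite index in $\Gamma_0$) leaves a real vector group. Density of $f(\Gamma_0)A$ in $H$ then follows from surjectivity of $\pi$ together with the fact that $H$ is built out of $L$ and $A$. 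The main obstacle is precisely this last step: ensuring that $K$ is not merely bounded but compact \emph{and} normal, and that $A$ is an honest Euclidean space rather than some more complicated compactly generated abelian Lie group, requires a delicate analysis of bounded central extensions of Lie groups and an averaging argument over the ambient group action; this is where the technical heart of Hrushovski's proof lies.
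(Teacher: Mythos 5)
The paper does not prove this statement at all: it is quoted (with citation) as Theorem 4.1 of Hrushovski's \emph{Beyond the Lascar group}, so there is no internal argument to compare against, and your sketch must be judged on its own merits. It has a genuine gap at its very first step. You propose to pass to an ultrapower and invoke the Lie model (stabilizer) theorem to obtain a type-definable subgroup $S$ commensurable with $\Lambda^*$ together with a surjection $\pi\colon S \to L$ onto a locally compact group with $\pi^{-1}(W) \subset (\Lambda^*)^2$. But the stabilizer theorem needs an invariant S1 ideal or definable measure; this is available for pseudo-finite or definably amenable approximate subgroups, not for an arbitrary approximate subgroup of an arbitrary group, and taking an ultrapower of an infinite $\Lambda$ does not make it pseudo-finite. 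Worse, if such a local model existed in general, then $\pi^{-1}(W)$ would show that \emph{every} approximate subgroup is laminar, which is false: the non-laminar approximate subgroups built from unbounded quasimorphisms (used in this very paper in Lemma \ref{Lemma: Non-laminar approximate lattice} and in the proof of Theorem \ref{Proposition: Detail resultat principal}(2)) are precisely the counterexamples. The compact defect $K$ is therefore not, as your globalization step suggests, an artifact of choosing a Borel section of a genuine local homomorphism --- a section defect of that kind can typically be corrected on a finite-index subgroup --- but an irreducible quasimorphism-type obstruction, and your step-by-step lifting also does not visibly yield a well-defined $f$ with uniformly bounded defect independent of the chosen word.

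Hrushovski's actual route is different in kind: he works with Lascar-type quotients and the amenability of definable group quotients to manufacture the quasi-homomorphism directly, and then analyses the \emph{rigid} subgroup $A=\overline{\langle K\rangle}$ (topologically generated by a normal compact subset) to extract, after passing to a finite-index subgroup, the Euclidean structure with isometric conjugation action. Note in particular that $A$ need not be central in $H$ --- the theorem only asserts an isometric action on a Euclidean space --- so your final paragraph's claim that $K$ ``lands in a central piece of $H$'' is also not justified.
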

 
 We note here that Hrushovski obtains the additional structure on $H$ by studying the subgroups $A$ topologically generated by a normal compact subset. He calls these subgroups \emph{rigid} and we will encounter them in this work as well. 
 
Since the defect $K$ of $f$ is contained in $A$, the subset $f(\Gamma_0)A$ is a subgroup. This can be used to derive a number of properties of $f$. However, considering the subset $f(\Gamma_0)A$ goes against the advantage of considering quasi-models as it erases completely the defect of $f$. Hence, we adopt another viewpoint. We suppose that $K \subset A$ is a Euclidean ball - which can always be arranged by augmenting the size of $K$ harmlessly - and consider $f(\Gamma_0)K$. Since $f$ is a quasi-homomorphism and $K$ is normal $(f(\Gamma_0)K)^2 \subset f(\Gamma_0)K^3$, which is covered by finitely many right translates of $f(\Gamma_0)K$. So we observe that $f(\Gamma_0)K$ is an approximate subgroup of $H$. This enables us to prove: 

\begin{lemma}\label{Lemma: Preparing quasi-models}
Let $\Lambda$ be an approximate subgroup of some group $\Gamma$ that commensurates it. There is a finite index subgroup $\Gamma_0 \subset \Gamma$ and a quasi-model $f: \Gamma_0 \rightarrow H$ of $\Lambda^2 \cap \Gamma_0$ with defect contained in a compact normal subset $K$ such that $A:=\overline{\langle K \rangle}$ is isomorphic to a Euclidean space on which $G$ acts by isometries and $K \subset A$ is a Euclidean ball about the identity, $f(\Gamma_0)$ is relatively dense in $H$ and $\overline{f(\Gamma_0)K}$ projects surjectively to $H/A$.
\end{lemma}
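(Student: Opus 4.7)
The plan is to apply Hrushovski's Quasi-Model Theorem (Theorem~\ref{Theorem: Hrushovski's quasi-models}) to $\Lambda^2 \cap \Gamma_0$ and refine the output along the lines sketched in the discussion preceding the lemma. First, I apply Theorem~\ref{Theorem: Hrushovski's quasi-models} to obtain a finite-index subgroup $\Gamma_0 \subset \Gamma$, a quasi-homomorphism $f : \Gamma_0 \to H$, a compact normal subset $K_0 \subset H$ containing the defect of $f$, and a closed subgroup $A := \overline{\langle K_0 \rangle}$ of $H$ that is isomorphic to a Euclidean space on which $H$ acts isometrically by conjugation, with $f(\Gamma_0) A$ dense in $H$.

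Second, I enlarge $K_0$ to a closed Euclidean ball $K \supset K_0$ about the identity of $A$. Since the conjugation action of $H$ on $A$ fixes the origin and is isometric, balls about the origin are conjugation-invariant, so $K$ remains normal in $H$; moreover $\overline{\langle K \rangle}$ is unchanged and equals $A$, and the defect of $f$ still lies in $K$. As recorded in the paragraph preceding the lemma, $\Lambda' := f(\Gamma_0) K$ is then an approximate subgroup of $H$: since $K$ is normal and $f$ has defect in $K$, we have $(\Lambda')^2 \subset f(\Gamma_0) K^3$, and since $K^3 \subset A$ is compact while $K$ is a Euclidean ball, finitely many right-translates of $K$ cover $K^3$, whence finitely many right-translates of $\Lambda'$ cover $(\Lambda')^2$.

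Third, to obtain the two remaining properties, I would replace $H$ by the closed subgroup $H^* := \overline{\langle \Lambda' \rangle}$ topologically generated by $\Lambda'$. This $H^*$ contains $A$ (since $K \subset \Lambda'$ generates $A$) and carries all the preceding structure. The composition $\pi \circ f : \Gamma_0 \to H^*/A$ is then an honest group homomorphism (the defect, lying in $A = \ker \pi$, is killed by $\pi$) with image $\pi(f(\Gamma_0))$ dense in $H^*/A$, inherited from the density of $f(\Gamma_0) A$ in the original $H$. The surjectivity $\pi(\overline{\Lambda'}) = H^*/A$ would follow by using that $\overline{\Lambda'}$ is a closed approximate subgroup of $H^*$ whose $\pi$-image is an approximate subgroup of $H^*/A$ containing the dense subgroup $\pi(f(\Gamma_0))$; relative density of $f(\Gamma_0)$ in $H^*$ then follows by combining this surjective projection with the approximate-subgroup structure of $\Lambda'$ to control the $A$-fibres, in the spirit of Lemma~\ref{Lemma: Intersection of commensurable sets}.

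I expect the main obstacle to be this last step: the passage from the density of $\pi(f(\Gamma_0))$ in $H^*/A$ to the relative density of $f(\Gamma_0)$ in $H^*$ is not automatic, since a dense subgroup of a locally compact group need not be relatively dense. The approximate-subgroup condition on $\Lambda'$ --- combined with the compactness of the Euclidean ball $K$ --- must be exploited carefully to control both the fibre $A$ and the quotient $H^*/A$; this is precisely where the replacement of $H$ by $H^*$ is essential, and where the delicate interplay between closure, continuity of $\pi$, and the approximate-subgroup combinatorics takes place.
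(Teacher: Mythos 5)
Your first two steps (enlarging the defect to a conjugation-invariant Euclidean ball $K$ and observing that $\Lambda' := f(\Gamma_0)K$ is an approximate subgroup) match the discussion preceding the lemma and are fine. The gap is in the third step, and it is exactly at the point you flag as the ``main obstacle'': replacing $H$ by $H^* := \overline{\langle \Lambda' \rangle}$ buys you nothing, because $H^* = H$. Indeed, the quasi-model theorem already guarantees that $f(\Gamma_0)A$ is dense in $H$, and since $K$ is a ball generating $A$ and is contained in $\Lambda'$, the closed subgroup generated by $\Lambda'$ contains $\overline{f(\Gamma_0)A} = H$. So the move that you describe as ``essential'' is vacuous, and the failure of relative density of $f(\Gamma_0)$ is untouched. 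More fundamentally, no passage to a closed \emph{subgroup} can fix this: the obstruction lives inside $A$ itself (the set $\Lambda'$ may be relatively compact, rather than relatively dense, in certain directions of $A$ after intersecting with a neighbourhood of $A$), and since $A \subset H^*$ always, those bad directions survive. The only way to kill them is to \emph{quotient} them out.

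That is what the paper does. It sets $\Xi := f(\Gamma_0)K$, intersects $\Xi^2$ with a compact neighbourhood $W^2A$ of $A$ to get an approximate subgroup $\Xi_W$ commensurated by $\Xi$, and then invokes a Schreiber-type theorem (Lemma~\ref{Lemma: Schreiber around subspace}, proved in the appendix) to produce a vector subspace $A_1 \subset A$, normalised by $\Xi$ and hence by $H$, such that $\Xi_W$ is sandwiched between $A_1$ and $C_1A_1$ for a compact $C_1$. Writing $A_2$ for the orthogonal complement of $A_1$ in $A$, one finds that $\Xi^2 \cap W^2A_2$ is relatively compact, and the composition $f' : \Gamma_0 \to H/A_2$ is the desired quasi-model: preimages of compact sets are still covered by finitely many translates of $\Lambda$, and $f'(\Gamma_0)$ is now relatively dense because the directions in which $\Xi$ was thin have been quotiented away. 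This Schreiber-type input is the key idea missing from your proposal. Separately, your argument for the surjectivity of the projection of $\overline{f(\Gamma_0)K}$ onto $H/A$ is also incomplete: a closed set containing a dense subgroup need not project onto the whole quotient, and the paper obtains surjectivity by first invoking \cite[Thm 5.16]{hrushovski2020beyond} to arrange that $\overline{\bar{f}(\Lambda)}$ has non-empty interior in $H/A$, so that the projection of $\overline{\Xi}$ is a dense subgroup with non-empty interior and hence equals $H/A$.
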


\begin{proof}
First of all, note that the map obtained by the composition $\bar{f}$ of $f$ and the natural projection $H \rightarrow H/A$ is a group homomorphism. By \cite[Thm 5.16]{hrushovski2020beyond} we can assume that $\overline{\bar{f}(\Lambda)}$ has non-empty interior and $\bar{f}(\Gamma_0)$ is dense. Write $\Xi$ the approximate subgroup $f(\Gamma_0)K$. We already know that $\Xi A$ is dense in $H$. The projection of $\overline{f(\Lambda)K}$ to $H/A$ contains $\overline{\bar{f}(\Lambda)}$ and, hence, has non-empty interior. Therefore, the projection of $\overline{\Xi}$ is a dense subgroup with non-empty interior i.e. is equal to $H/A$. Now, take $W \subset H$ a symmetric relatively compact neighbourhood of the identity and write $\Xi_W:= \Xi^2 \cap W^2A$. Then $\Xi_W$ is an approximate subgroup (Lemma \ref{Lemma: Intersection of approximate subgroups}) commensurated by $\Xi$ and contained in a compact neighbourhood of $A$. According to Lemma \ref{Lemma: Schreiber around subspace} below, there are a vector subspace $A_1 \subset A$ normalised by $\Xi$ and a compact subset $C_1 \subset H$ such that $\Xi_W \subset C_1A_1$ and $A_1 \subset C_1\Xi_W$. Write $A_2$ the space orthogonal to $A_1$ and notice that both $A_1$ and $A_2$ are normalised by $\Xi$, hence by $H$ since $\overline{\Xi A}$ is equal to $H$. Now, 
$$W^2A_2 \cap \Xi^2 \subset \Xi_W \subset C_1A_1.$$
So $W^2A_2 \cap \Xi^2$ is relatively compact. Since for every relatively compact subset $W' \subset H$, $W'A$ is covered by finitely many translates of $WA$, we have by Lemma \ref{Lemma: Intersection of commensurable sets} that $W'^2A_2 \cap \Xi^2$ is relatively compact as well. Let $f':\Gamma_0 \rightarrow H/A_2$ denote the composition of $f$ with the natural projection $H \rightarrow H/A_2$. Then $f'$ is a quasi-morphism with defect contained in the projection of $K$ to $H/A_2$. Take any neighbourhood of the identity $W'' \subset H/A_2$, then our discussion implies that $f'^{-1}(W'')$ is contained in $f^{-1}(C_2)$ for some relatively compact subset $C_2$ of $H$. Hence, $f'^{-1}(W'')$ is covered by finitely many translates of $\Lambda$.  Furthermore,  the projection of $\Xi \cdot \Xi_W$ is relatively dense in $H/A_2$.  Since $\Xi \cdot \Xi_W$  is covered by finitely many translates of $\Xi$, the projection of $f(\Gamma_0)$ (i.e. $f'(\Gamma_0)$) is relatively dense in $H/A_2$. So $f'$ is the quasi-model we are looking for. 
\end{proof}

The following technical lemma used in the proof above will be established in the appendix. 
 
  \begin{lemma}\label{Lemma: Schreiber around subspace}
 Let $\Lambda$ be an approximate subgroup of a second countable locally compact group $G$. Let $ A\subset G$ be a closed normal subgroup isomorphic to $\mathbb{R}^n$ and suppose that there is a compact subset $K$ such that $\Lambda$ is contained in $KA$. Then there are a vector subspace $V \subset A$ and a compact subset $K'$ in the normaliser of $V$ such that $\Lambda \subset VK'$ and $V \subset \Lambda K'$. Moreover, $\Comm_{G}(\Lambda)$ normalises $V$. 
 \end{lemma}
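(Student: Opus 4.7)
The plan is to reduce everything to the abelian trace $\Xi:=\Lambda^{2}\cap A$ and apply the classical Schreiber theorem, then transport the resulting data back to $\Lambda$ using the normal subgroup structure. By Lemma \ref{Lemma: Intersection of approximate subgroups}, $\Xi$ is an approximate subgroup of $A\cong\mathbb{R}^{n}$, so Schreiber's theorem for Euclidean approximate subgroups supplies a vector subspace $V\subset A$ and a compact $B\subset A$ with $\Xi\subset VB$ and $V\subset\Xi B$. This $V$ will be the subspace announced in the conclusion.

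To check that $\Comm_{G}(\Lambda)$ normalises $V$, I would take $g\in\Comm_{G}(\Lambda)$; then $g\Lambda^{2}g^{-1}$ is commensurable with $\Lambda^{2}$, and intersecting with $A$ (and invoking normality of $A$ together with Lemma \ref{Lemma: Intersection of approximate subgroups}(3)) yields commensurability of $g\Xi g^{-1}$ with $\Xi$. Conjugation by $g$ is a linear automorphism of $A$, so Schreiber's theorem applied to $g\Xi g^{-1}$ singles out $gVg^{-1}$ as its Schreiber subspace; as two commensurable vector subspaces of $\mathbb{R}^{n}$ must coincide, $gVg^{-1}=V$. Since any $\lambda\in\Lambda$ satisfies $\lambda\Lambda\lambda^{-1}\subset\Lambda^{3}$, itself commensurable with $\Lambda$, one obtains $\Lambda\subset\Comm_{G}(\Lambda)\subset N_{G}(V)$.

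The inclusion $V\subset\Lambda K'$ is then essentially immediate: $V\subset\Xi B\subset\Lambda^{2}B$, and by taking a right-cover $\Lambda^{2}\subset\Lambda F_{R}$ with a finite set $F_{R}\subset\Lambda^{3}\subset N_{G}(V)$ (always arrangeable by choosing representatives within $\Lambda^{2}$), one sees $V\subset\Lambda(F_{R}B)$, where $K_{2}':=F_{R}B$ is a compact subset of $N_{G}(V)$.

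The main obstacle is the opposite inclusion $\Lambda\subset VK'$, which is equivalent to the relative compactness of $\phi(\Lambda)$ in $H:=N_{G}(V)/V$, with $\phi$ the quotient map. I would first replace $K$ by $K\cap N_{G}(V)$: any decomposition $\lambda=ka$ with $\lambda,a\in N_{G}(V)$ automatically forces $k\in N_{G}(V)$. Then $\phi(\Lambda)$ is an approximate subgroup of $H$, its further image in $H/(A/V)=N_{G}(V)/A$ is contained in the compact $\pi(K)$, and $\phi(\Lambda)^{2}\cap(A/V)=\phi(\Xi)\subset\phi(B)$ is relatively compact. The delicate point is to promote bounded fibres and bounded base to boundedness of $\phi(\Lambda)$: a measurable lift of $\pi(K)$ to a compact subset of $N_{G}(V)$ provides representatives in every fibre, any two elements of $\phi(\Lambda)$ lying over the same point of $N_{G}(V)/A$ differ by an element of the relatively compact $\phi(B)$, and the approximate subgroup relation $\phi(\Lambda)^{2}\subset\phi(F)\phi(\Lambda)$ with $F\subset\Lambda^{3}\subset N_{G}(V)$ finite controls the transition between fibres. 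This produces a compact $K_{1}'\subset N_{G}(V)$ with $\Lambda\subset VK_{1}'$, and setting $K':=K_{1}'\cup K_{2}'$ concludes the proof.
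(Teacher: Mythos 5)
There is a genuine gap, and it is located exactly where you flag the ``delicate point'': the reduction to the abelian trace $\Xi:=\Lambda^{2}\cap A$ cannot work, because $\Xi$ may fail entirely to see the directions of $A$ along which $\Lambda$ is unbounded. Concretely (writing $G=\mathbb{R}^{2}$ additively), let $A=\mathbb{R}\times\{0\}$ and $\Lambda=\{(n,\langle n\sqrt{2}\rangle):n\in\mathbb{Z}\}$, where $\langle x\rangle$ denotes the representative of $x$ mod $1$ in $[-1/2,1/2)$. This is a symmetric approximate subgroup containing $0$, with $\Lambda+\Lambda\subset\Lambda+(\{0\}\times\{-1,0,1\})$, and $\Lambda\subset K+A$ for $K=\{0\}\times[-1/2,1/2]$. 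Yet $(n+m,\langle n\sqrt2\rangle+\langle m\sqrt2\rangle)\in A$ forces $(n+m)\sqrt2\in\mathbb{Z}$, hence $n=-m$, so $\Xi=\{(0,0)\}$ (and likewise $\Lambda^{2k}\cap A=\{0\}$ for every $k$, since $\Lambda$ is a graph over $\mathbb{Z}$ and meets $A$ only at the origin). Your construction then outputs $V=\{0\}$, for which the conclusion $\Lambda\subset VK'$ is false; the correct subspace here is $V=A$. The same example shows why the final step cannot be repaired: the image of $\Lambda$ in $G/A$ is compact, every fibre of $\Lambda$ over $G/A$ is a \emph{singleton}, and the approximate subgroup relation holds, yet $\Lambda$ is unbounded --- bounded base plus bounded fibres plus the covering relation $\phi(\Lambda)^{2}\subset\phi(F)\phi(\Lambda)$ simply do not control the drift of the fibre representatives inside $A$. (Your argument that $\Comm_G(\Lambda)$ normalises whatever $V$ satisfies the two inclusions is fine, and essentially the one in the paper; the problem is producing the right $V$.)

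This is precisely why the paper does not intersect with $A$. Instead it intersects $\Lambda^{2}$ with a tube $W^{2}A$ (a \emph{neighbourhood} of $A$, which does capture all of $\Lambda$ up to commensurability by Lemma \ref{Lemma: Intersection of commensurable sets}), takes the closure to get an amenable \emph{closed} approximate subgroup, invokes the structure theorem for such objects to extract a closed approximate subgroup $\Lambda_{sol}$ generating a soluble group $L\supset A$, and then applies the soluble-Lie-group generalisation of Schreiber's theorem \cite{machado2019infinite} to $\Lambda_{sol}$ itself inside a faithful linear representation of $L$ --- followed by an argument pushing the resulting subgroup $N$ into $\rho(A)$, and a Gleason--Yamabe reduction for general $G$. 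Some version of this passage through a closed approximate subgroup commensurable with $\Lambda$ (rather than a trace on $A$) seems unavoidable; if you want to salvage your outline you would need, at minimum, a substitute for classical Schreiber that applies to the full set $\Lambda$ living near $A$, which is essentially the content of the cited theorem.
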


\subsection{Uniform discreteness}
 Our next task is to introduce useful notions of discreteness.

\begin{definition}\label{Definition: Notions of discreteness}
A subset $X$ of locally compact group $G$ is \emph{left-uniformly discrete} (resp. \emph{right-uniformly discrete}) if there is an open subset $U \subset G$ such that $X^{-1}X \cap U=\{e\}$ (resp. $XX^{-1} \cap U=\{e\}$). We say that $X$ is \emph{locally finite} if for all compact subsets $K \subset G$, $|X \cap K| < \infty$. 
\end{definition}

When \emph{left-} or \emph{right-} is not specified we will always mean the left- version of uniform discreteness. When $X$ is a subset of $(G,d)$ a metric group together with a left-invariant distance, then $$\inf_{x_1\neq x_2 \in X} d(x_1,x_2)\geq r > 0$$
is equivalent to $X^{-1}X \cap B_d(e,r) =\{e\}$ where $B_d(e,r)$ denotes the open ball of radius $r$ centred at $e$. In this setup, uniform discreteness is then equivalent to the fact that any two distinct elements of $X$ are separated by at least some uniform distance. We mention now another characterisation of uniform discreteness. 

\begin{lemma}\label{Lemma: Characterisation uniform discreteness}
Let $X$ be a subset of a locally compact group. For $W \subset G$ neighbourhood of the identity, the multiplication map $X \times W \rightarrow G$ is injective if and only if $X^{-1}X \cap WW^{-1} =\{e\}$.
\end{lemma}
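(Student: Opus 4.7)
The plan is to verify both implications by a direct translation between the failure-of-injectivity condition and the intersection condition, using only the definitions. Both directions are essentially the same algebraic manipulation.

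For the forward direction, I would assume the multiplication map $X \times W \to G$ is injective and pick $g \in X^{-1}X \cap WW^{-1}$. Writing $g = x_1^{-1}x_2 = w_1 w_2^{-1}$ with $x_1,x_2 \in X$ and $w_1,w_2 \in W$, I rearrange this to $x_1 w_1 = x_2 w_2$, and injectivity yields $x_1 = x_2$ (and $w_1 = w_2$), so $g = e$.

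For the converse, I would start with $x_1 w_1 = x_2 w_2$ and observe that $x_1^{-1} x_2 = w_1 w_2^{-1}$ lies in $X^{-1}X \cap WW^{-1}$. By hypothesis this element is $e$, so $x_1 = x_2$ and then $w_1 = w_2$, giving injectivity.

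There is no real obstacle here — the lemma is essentially a tautological reformulation of injectivity of the multiplication map in terms of a single pointwise intersection. The only thing to be mindful of is the side on which the inverses appear: the expressions $X^{-1}X$ and $WW^{-1}$ are forced by the order of multiplication in the map $(x,w) \mapsto xw$, so the statement would change if one instead considered the map $(w,x) \mapsto wx$ or $(x,w) \mapsto wx$. No further machinery or results from earlier in the paper are needed.
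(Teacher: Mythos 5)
Your proposal is correct and is essentially the same argument as the paper's: both directions reduce to the identity $x_1w_1 = x_2w_2 \iff x_1^{-1}x_2 = w_1w_2^{-1}$, with the element on either side lying in $X^{-1}X \cap WW^{-1}$. The only difference is cosmetic (order of the two implications and which of $x_1^{-1}x_2$ versus $x_2^{-1}x_1$ is written down), so nothing further is needed.
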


\begin{proof}
Suppose first that $X^{-1}X \cap WW^{-1} =\{e\}$. If $x_1,x_2 \in X$ and $ w_1, w_2 \in W$ are such that $x_1w_1=x_2w_2$, then $x_2^{-1}x_1=w_2w_1^{-1}$. So $x_1=x_2$ and $w_1=w_2$. 

Conversely, suppose that the multiplication map $X \times W \rightarrow G$ is injective for some open neighbourhood of the identity $W$. Then for all $x_1,x_2 \in X$ and for all  $w_1, w_2 \in W$ with $x_1\neq x_2$ we have $x_1w_1\neq x_2w_2$ i.e. $x_2^{-1}x_1\neq w_2w_1^{-1}$. So $x_2^{-1}x_1 \notin WW^{-1}$ and $X^{-1}X \cap WW^{-1}=\{e\}$. 
\end{proof}

Finally, we prove that for approximate subgroups the notion of uniform discreteness persists even after taking powers. 

\begin{lemma}\label{Lemma: powers of uniformly discrete approximate subgroups are uniformly discrete}
Let $\Lambda$ be an approximate subgroup of a locally compact group $G$. If $\Lambda$ is uniformly discrete, then $\Lambda^n$ is uniformly discrete for all integers $n \geq 0$. 
\end{lemma}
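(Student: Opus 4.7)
My plan is to show that $\Lambda^n$ is uniformly discrete by proving directly that $(\Lambda^n)^{-1}\Lambda^n = \Lambda^{2n}$ meets some neighbourhood of the identity only at $e$. The strategy is a simple counting bound: an approximate subgroup lives inside finitely many left translates of $\Lambda$, and uniform discreteness of $\Lambda$ forces each such translate to contribute at most one point near $e$.

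First I would invoke uniform discreteness to fix an open neighbourhood $U$ of $e$ with $\Lambda^{-1}\Lambda \cap U = \{e\}$, then pick an open symmetric neighbourhood $K$ of $e$ with $K^{-1}K \subset U$ (such a $K$ exists by continuity of multiplication). The key observation is that for \emph{every} $x \in G$ one has $|\Lambda \cap xK| \leq 1$: indeed, if $\lambda_1, \lambda_2 \in \Lambda \cap xK$ with $\lambda_i = xk_i$, then $\lambda_1^{-1}\lambda_2 = k_1^{-1}k_2 \in K^{-1}K \subset U$, so uniform discreteness forces $\lambda_1 = \lambda_2$. Notice that the neighbourhood $K$ is the \emph{same} for all $x$; this is the heart of the matter.

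Next I would use the approximate subgroup hypothesis. A routine induction from $\Lambda^2 \subset F_0\Lambda$ yields a finite set $F \subset G$ with $\Lambda^{2n} \subset F\Lambda$. Decomposing
$$\Lambda^{2n} \cap K \ \subset\ \bigcup_{f \in F} f\bigl(\Lambda \cap f^{-1}K\bigr)$$
and applying the previous bound to each $x = f^{-1}$ gives $|\Lambda^{2n} \cap K| \leq |F|$. Hence $\Lambda^{2n} \cap K$ is a finite set. Since $G$ is Hausdorff, the finite set $S := (\Lambda^{2n} \cap K) \setminus \{e\}$ is closed, so some open neighbourhood $V \subset K$ of $e$ is disjoint from $S$. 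Then $\Lambda^{2n} \cap V = \{e\}$, which is exactly uniform discreteness of $\Lambda^n$.

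I do not expect any serious obstacle here: everything rests on the single observation that a single $K$ with $K^{-1}K \subset U$ controls intersections with \emph{all} translates $xK$, after which the counting and Hausdorff separation are immediate. One could alternatively phrase the argument via Lemma \ref{Lemma: Characterisation uniform discreteness}, asking for $W'$ with $\Lambda^{2n} \cap W'W'^{-1} = \{e\}$, but this reduces to the same computation.
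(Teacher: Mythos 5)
Your proof is correct. The paper explicitly leaves this lemma to the reader, so there is no argument to compare against; your route (a single small $K$ with $K^{-1}K\subset U$ gives $|\Lambda\cap xK|\leq 1$ for every $x$, then $\Lambda^{2n}\subset F\Lambda$ bounds $|\Lambda^{2n}\cap K|$ by $|F|$, and Hausdorff separation shrinks $K$ to the required neighbourhood) is exactly the standard intended argument.
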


We leave the proof of this observation to the interested reader.

\subsection{Chabauty space and dynamical hulls of discrete subsets} Let $G$ be a locally compact second countable group and let $\mathcal{C}(G)$ be the set of closed subsets of $G$. The \emph{Chabauty-Fell} topology on $\mathcal{C}(G)$ is defined by the subbase of open subsets:
$$ U^V = \{ F \in \mathcal{C}(G) : F \cap V \neq \emptyset \}$$
 and,
$$ U_K = \{ F \in \mathcal{C}(G) : F \cap K = \emptyset \} $$
for all $V \subset G$ open and $ K \subset G$ compact. The \emph{Chabauty space} of $G$ is defined as $\mathcal{C}(G)$ endowed with the Chabauty--Fell topology. One can check that the map 
\begin{align*}
 G \times \mathcal{C}(G) & \rightarrow \mathcal{C}(G) \\
 (g, F) & \mapsto gF
\end{align*}
defines a continuous action of the group $G$ on $\mathcal{C}(G)$ and that $\mathcal{C}(G)$ is a compact metrizable set, see \cite{MR139135}. Convergence in the Chabauty space can also be characterised as follows (e.g. \cite[Section 2.2]{bjorklund2019borel}). Namely, a sequence $(F_i)_{i\geq 0}$ converges to $F \in \mathcal{C}(G)$ if and only if:
\begin{enumerate}
\item for every $x \in F$ there are $x_i \in F_i$ for all $i \in \mathbb{N}$ such that $x_i \rightarrow x$ as $i \rightarrow \infty$; 
\item If $x_i \in F_i$ for all $i \in \mathbb{N}$ then every accumulation point of $(x_i)_{i \geq 0}$ lies in $F$.
\end{enumerate}    

\begin{definition}\label{Definition: Invariant hull}
Let $ F \in \mathcal{C}(G)$.  The \emph{invariant hull} is 
$$\Omega_F:= \overline{\{gF : g \in G\}}$$
the closure of the $G$-orbit of $F$. The \emph{extended invariant hull} is 
$$\Omega_F^{ext}:=  \{ X \in \mathcal{C}(G): X^{-1}X \subset \overline{F^{-1}F} \}.$$
\end{definition}

Both spaces are related in the following way. 

\begin{lemma}[\cite{machado2020apphigherrank}]\label{Lemma: Extended invariant hull contains invariant hull}
  Let $X_0$ be a closed subset of a locally compact second countable group $G$. Then $\Omega_{X_0,G}^{ext}$ is a closed subset of $\mathcal{C}(G)$ stable under the $G$-action. Thus, the set $\Omega_{X_0,G}^{ext}$ is a metrizable compact continuous $G$-space. Moreover, the invariant hull $\Omega_{X_0}$ is contained in $\Omega_{X_0,G}^{ext}$. 
 \end{lemma}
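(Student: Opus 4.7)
The plan is to verify each assertion in turn, starting from the definition $\Omega_{X_0,G}^{ext}= \{X \in \mathcal{C}(G) : X^{-1}X \subset \overline{X_0^{-1}X_0}\}$, and relying on the characterisation of Chabauty--Fell convergence recalled just before the statement. Since $\mathcal{C}(G)$ is already known to be compact, metrizable, and to carry a continuous $G$-action, everything reduces to showing that $\Omega_{X_0,G}^{ext}$ is closed and $G$-invariant.

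First I would show that $\Omega_{X_0,G}^{ext}$ is closed. Let $(X_n)_{n\geq 0}$ be a sequence in $\Omega_{X_0,G}^{ext}$ converging in the Chabauty--Fell topology to some $X \in \mathcal{C}(G)$, and pick an arbitrary element $g \in X^{-1}X$, say $g=x^{-1}y$ with $x,y \in X$. By property (1) of the Chabauty--Fell convergence, there exist $x_n,y_n \in X_n$ with $x_n \to x$ and $y_n \to y$. Continuity of inversion and multiplication in $G$ gives $x_n^{-1}y_n \to x^{-1}y$; but $x_n^{-1}y_n \in X_n^{-1}X_n \subset \overline{X_0^{-1}X_0}$, and the latter is closed, so $g = x^{-1}y \in \overline{X_0^{-1}X_0}$. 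Hence $X \in \Omega_{X_0,G}^{ext}$.

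Next I would check $G$-invariance, which is essentially tautological: for any $X \in \Omega_{X_0,G}^{ext}$ and any $g \in G$ one has
\[
(gX)^{-1}(gX) = X^{-1}g^{-1}gX = X^{-1}X \subset \overline{X_0^{-1}X_0},
\]
so $gX \in \Omega_{X_0,G}^{ext}$. Combined with closedness, this yields that $\Omega_{X_0,G}^{ext}$ is a $G$-stable closed subset of the compact metrizable $G$-space $\mathcal{C}(G)$, whence it is itself compact metrizable and the $G$-action restricts to a continuous one.

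Finally, for the last assertion, I observe that each orbit element $gX_0$ trivially satisfies $(gX_0)^{-1}(gX_0) = X_0^{-1}X_0 \subset \overline{X_0^{-1}X_0}$, so the whole orbit $\{gX_0:g\in G\}$ lies in $\Omega_{X_0,G}^{ext}$; by closedness, its closure $\Omega_{X_0}$ also lies in $\Omega_{X_0,G}^{ext}$. The only subtle point in the whole argument is the convergence step in the proof of closedness, where one must remember to extract approximating sequences from both $x$ and $y$ simultaneously via property (1); all other steps are purely formal.
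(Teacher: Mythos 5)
Your proof is correct and is exactly the standard argument; the paper itself does not reprove this lemma (it cites \cite{machado2020apphigherrank}, and the same computation underlies \cite[Lem.\ 4.6]{bjorklund2016approximate}), but the expected proof is precisely the one you give: sequential closedness via the two-sided characterisation of Chabauty--Fell convergence, tautological $G$-invariance of the condition $X^{-1}X\subset\overline{X_0^{-1}X_0}$, and membership of the orbit plus closedness to get $\Omega_{X_0}\subset\Omega_{X_0,G}^{ext}$. The one point you rightly flag --- extracting approximating sequences for both $x$ and $y$ and using metrizability of $\mathcal{C}(G)$ to reduce closedness to sequences --- is the only nontrivial step, and you handle it correctly.
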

 
The structure of the space $\Omega_F$ is closely linked to the structure of $F$ as we will see multiple times later. For now, we offer the following examples:
\begin{enumerate}
\item $ \emptyset \notin \Omega_F$ if and only if $F$ is relatively dense \cite[Prop. 4.4]{bjorklund2016approximate};
\item if $H$ is a closed subgroup, then $\Omega_{H}$ is isomorphic as a compact $G$-space to the one-point compactification of $G/H$ \cite[Lem. 2.3]{bjorklund2019borel};
\item if $F' \subset \Omega_F$, then $F'^{-1}F' \subset \overline{F^{-1}F}$ \cite[Lem 4.6]{bjorklund2016approximate} (see also the proof of Lemma \ref{Lemma: Extended invariant hull contains invariant hull}).
 \end{enumerate}
 
Approximate lattices are thought of as generalisations of lattices, and it can sometimes be enlightening to look at what some of the objects we study become in the case of lattices. Recall that a lattice $\Gamma$ of a locally compact group $G$ is a discrete subgroup such that $G/\Gamma$ admits a $G$-invariant Borel probability measure. In other words, $\Gamma$ is a lattice if and only if $\Omega_{\Gamma}$ admits a $G$-invariant (for the left-action) Borel probability measure $\nu$ such that $\nu(\emptyset)=0$. This last condition will appear again naturally and was first considered in this context in \cite{bjorklund2016approximate}.  

 \begin{definition}\label{Definition: Non-trivial measures}
  We say that a Borel measure $\nu$ on $\mathcal{C}(G)$ is \emph{proper} if $\nu(\{\emptyset\})=0$.
 \end{definition}
 
Restricting our attention to proper measures enables us to think only of measures that are meaningful regarding the structure of the set we consider. Indeed, for any $X \in \mathcal{C}(G)$, the Dirac mass $\delta_{\emptyset}$ at $\{\emptyset\}$ is a non-proper $G$-invariant Borel probability measure on $\Omega_X^{ext}$ and there is no hope to draw much information on $X$ from it.

\section{The definitions of approximate lattices}

As must already be apparent from the introduction, an approximate lattice is generally speaking an approximate subgroup that is both discrete and has finite co-volume. We have already defined and investigated in the previous sections the notions of approximate subgroup and uniform discreteness; it remains to define an appropriate notion of ``finite co-volume". This is precisely at that point that the definitions of approximate lattices that we will introduce differ: some approaches are set-theoretic, others dynamical and the last ones aim to relate our object to a lattice. The goal of this section is to introduce all the aforementioned notions of approximate lattices and survey some elementary results. 

Before we move on, we note that all definitions of approximate lattices defined below coincide when the approximate subgroup considered is in fact a subgroup. In other words, all definitions of approximate lattices generalise the \emph{actual} lattices, albeit with varying degrees of generality.
\subsubsection{Approximate lattices and uniform approximate lattices} 

We have already introduced approximate lattices and uniform approximate lattices earlier in the introduction. Nevertheless, we provide definitions again.  

\begin{definition}[Meyer, \cite{meyer1972algebraic}; Bj\"{o}rklund--Hartnick, \cite{bjorklund2016approximate}]
Let $G$ be a locally compact group. We say that $\Lambda \subset G$ is a \emph{uniform approximate lattice} if $\Lambda$ is a uniformly discrete approximate subgroup of $G$ and there is a compact subset $K \subset G$ such that $\Lambda K = G$. 
\end{definition}

By replacing te compactness condition on $K$ with a weaker finite volume assumption we have:

\begin{definition}[Hrushovski, A.1, \cite{hrushovski2020beyond}]\label{Definition: Approximate lattice}
Let $G$ be a locally compact group. We say that $\Lambda \subset G$ is an \emph{approximate lattice} if $\Lambda$ is a uniformly discrete approximate subgroup of $G$ and there is $ \mathcal{F} \subset G$ with finite left-Haar measure such that $\Lambda \mathcal{F} = G$. 
\end{definition}

\begin{remark}\label{Remark: Lack of symmetry}
Let us briefly mention a few remarks about (lack of) symmetry in the definitions of uniform approximate lattices and approximate lattices. Since $\Lambda$ is symmetric, $\Lambda K = G$ implies $K^{-1} \Lambda = G$ and $\Lambda$ is thus both left and right relatively dense. Moving $\mathcal{F}$ around in Definition \ref{Definition: Approximate lattice} is not so harmless. Using the symmetry of $\Lambda$ we only prove that there is $\mathcal{F} \subset G$ with $\mathcal{F}\Lambda = G$ for some $\mathcal{F}$ with finite \emph{right}-Haar measure. One can therefore wonder: what of those uniformly discrete approximate subgroups that admit a subset $\mathcal{F}$ with finite left-Haar measure such that $\mathcal{F}\Lambda =G$? Since $G$ is not assumed unimodular these two classes of discrete approximate subgroups could be distinct. In addition, the methods developed in \cite[Appendix A]{hrushovski2020beyond} fail for this new notion. Fortunately, the results below implicitly show that both notions coincide (e.g. Proposition \ref{Proposition: Reformulation approximate lattices}). We do not treat this case explicitly to not burden this work with one more definition.  
\end{remark}

\subsubsection{Strong, $\star$- and BH-approximate lattices}\label{Subsection: Invariant hull, strong approximate lattices and star-approximate lattices}
We turn now to the kinds of approximate lattices with a dynamical flavour. We start with strong approximate lattices. 
  
\begin{definition}[Bj\"{o}rklund--Hartnick, Definition 4.9, \cite{bjorklund2016approximate}]\label{Definition: Strong approximate lattice}
 A uniformly discrete approximate subgroup $\Lambda$ of a second countable locally compact group $G$ is a \emph{strong approximate lattice} if there is a proper $G$-invariant Borel probability measure $\nu$ on $\Omega_{\Lambda}$. 
\end{definition}

 We can relax slightly the definition of a strong approximate lattice in order to define the more malleable $\star$-approximate lattices (see \cite[ \S 2.1.4]{machado2020apphigherrank} for a discussion about the difference).

 \begin{definition}\label{Definition: star-approximate lattice}
 A uniformly discrete approximate subgroup $\Lambda$ of a second countable locally compact group $G$ is a $\star$-\emph{approximate lattice} if there is a proper $G$-invariant Borel probability measure $\nu$ on $\Omega_{\Lambda}^{ext}$. 
 \end{definition}

 By considering the support of an ergodic measure on $\Omega_{\Lambda}^{ext}$ one can see that Definition \ref{Definition: star-approximate lattice} admits the following equivalent form: 
 
 \begin{proposition}[Prop. 1, \cite{machado2020apphigherrank}]\label{Proposition: Equivalent definitions of star-approximate lattices}
  Let $\Lambda$ be a uniformly discrete approximate subgroup of a locally compact second countable group $G$. The following are equivalent: 
  \begin{enumerate}
   \item the approximate subgroup $\Lambda$ is a $\star$-approximate lattice;
   \item there is $X_0 \subset \Lambda^2$ with $X_0^{-1}X_0 \subset \Lambda^2$ such that there is a proper $G$-invariant ergodic Borel probability measure $\nu$ on $\Omega_{X_0}$ the invariant hull of $X_0$ in $G$.
  \end{enumerate}
 \end{proposition}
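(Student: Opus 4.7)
My plan is to handle the two directions separately. The direction $(2) \Rightarrow (1)$ is essentially formal, while $(1) \Rightarrow (2)$ requires ergodic decomposition combined with a simple translation trick.

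For $(2) \Rightarrow (1)$, the strategy is to verify $X_0 \in \Omega_\Lambda^{ext}$ and then transport the measure. Since $\Lambda^2$ is uniformly discrete by Lemma \ref{Lemma: powers of uniformly discrete approximate subgroups are uniformly discrete}, it is closed in $G$, so any subset---in particular $X_0$---is closed and belongs to $\mathcal{C}(G)$. The condition $X_0^{-1}X_0 \subset \Lambda^2 = \overline{\Lambda^2}$ then places $X_0$ in $\Omega_\Lambda^{ext}$. Invoking Lemma \ref{Lemma: Extended invariant hull contains invariant hull}, the extended hull is a closed $G$-invariant subset of $\mathcal{C}(G)$, hence $\Omega_{X_0} \subset \Omega_\Lambda^{ext}$, and extending $\nu$ by zero yields the required proper $G$-invariant Borel probability measure on $\Omega_\Lambda^{ext}$.

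For $(1) \Rightarrow (2)$, I would start with the proper $G$-invariant Borel probability measure $\mu$ on $\Omega_\Lambda^{ext}$ furnished by the definition and apply ergodic decomposition; this is legitimate since $G$ is second countable and $\Omega_\Lambda^{ext}$ is a compact metrizable $G$-space. Because $\mu(\{\emptyset\}) = 0$, a positive-measure family of ergodic components is itself proper, and I keep one, calling it $\nu$. Next, using a countable basis of open sets for the separable space $\mathrm{supp}(\nu)$ and ergodicity on each corresponding $G$-saturation, I would produce a non-empty $X \in \mathrm{supp}(\nu)$ whose $G$-orbit is dense in $\mathrm{supp}(\nu)$.

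The main obstacle is that this generic point $X$ need not sit inside $\Lambda^2$, and the key idea resolving it is a translation: picking any $x \in X$, set $X_0 := x^{-1}X$. Since $X \in \Omega_\Lambda^{ext}$, we have $X_0 \subset X^{-1}X \subset \overline{\Lambda^2} = \Lambda^2$, and by the same token $X_0^{-1}X_0 = X^{-1}X \subset \Lambda^2$. Because left translation is part of the $G$-action on $\mathcal{C}(G)$, the points $X_0$ and $X$ lie in the same $G$-orbit, so $\Omega_{X_0} = \overline{G \cdot X} = \mathrm{supp}(\nu)$. Consequently $\nu$ is a proper $G$-invariant ergodic Borel probability measure on $\Omega_{X_0}$, completing the equivalence. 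The step requiring the most care is the genericity of dense orbits, which is standard once one records that $\Omega_\Lambda^{ext}$ is second countable.
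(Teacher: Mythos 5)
Your argument is correct and follows exactly the route the paper indicates (it only sketches the proof as ``consider the support of an ergodic measure on $\Omega_{\Lambda}^{ext}$'', citing \cite{machado2020apphigherrank}): ergodic decomposition of a proper invariant measure, genericity of dense orbits in the support via a countable basis, and the translation $X_0 = x^{-1}X$ to land inside $\Lambda^2$, together with the formal converse using that $\Omega_{\Lambda}^{ext}$ is closed and $G$-invariant. No gaps; this matches the intended proof.
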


Finally, we can define the more general of the three `dynamically minded' definitions of approximate lattices. An \emph{admissible} measure of a locally compact group $G$ is a Borel probability measure $\mu$ on $G$ that is absolutely continuous with respect to the Haar measure and whose support generates $G$ as a semi-group. A Borel probability measure $\nu$ on $\Omega_{\Lambda}$ is called $\mu$-stationary if $\mu *  \nu = \nu$ (e.g.\cite[\S 4.2]{bjorklund2016approximate}).

 \begin{definition}[Bj\"{o}rklund--Hartnick, Definition 4.12, \cite{bjorklund2016approximate}]\label{Definition: BH-approximate lattice}
 A uniformly discrete approximate subgroup $\Lambda$ of a second countable locally compact group $G$ is a \emph{BH-approximate lattice} if for all admissible measures $\mu$ on $G$ there is a proper $\mu$-stationary measure $\nu$ on $\Omega_{\Lambda}$. 
 \end{definition}

 \subsubsection{Cut-and-project schemes and model sets}\label{Subsubsection: Cut-and-project schemes and model sets}

Finally, we (re-)introduce the model sets. They represent what an ideal approximate lattice should look like. They originate from lattices in products of groups, which are now known to be extremely regular. We start with the definitions of cut-and-project schemes.

\begin{definition}[Meyer, \cite{meyer1972algebraic}; Bj\"{o}rklund--Hartnick, \cite{bjorklund2016approximate}]\label{Definition: Cut-and-project scheme}
A triple $(G,H,\Gamma)$ is a \emph{cut-and-project scheme} if: 
\begin{enumerate}
\item $G$ and $H$ are locally compact groups;
\item $\Gamma \subset G \times H$ is a lattice; 
\item $\Gamma$ projects injectively to $G$ and densely to $H$.
\end{enumerate}
\end{definition}

Cut-and-project schemes enable us to efficiently build interesting families of approximate lattices: 

\begin{definition}[Meyer, \cite{meyer1972algebraic}; Bj\"{o}rklund--Hartnick, \cite{bjorklund2016approximate}]\label{Definition: Model sets}
Let $(G,H, \Gamma)$ be a cut-and-project scheme. Take $W_0 \subset H$ a symmetric relatively compact neighbourhood of the identity and let $p_G:G \times H \rightarrow G$ denote the natural projection. We call the subset 
$$ M:= p_G\left(\left(G \times W_0\right) \cap \Gamma\right)$$ 
a \emph{model set (associated to $(G,H,\Gamma)$)}.
\end{definition}

As we will mention later, model sets are approximate lattices in all meanings defined above, and this justifies in large parts our interest in this construction, see \S \ref{Subsection: Wrapping up the proof}.

\subsection{Elementary results}
We survey now a number of relatively elementary properties of approximate lattices. Since these results are now well-known and the proofs are elementary, we provide only the proofs that we deem valuable for an illustrative purpose. 

\subsubsection{Stability with subsets and powers}

We first make an easy observation about the powers of approximate lattices. 

\begin{lemma}\label{Lemma: powers of approximate lattices}
Let $\Lambda$ be an approximate subgroup of a locally compact group $G$. If $\Lambda$ is an approximate lattice (resp. a uniform approximate lattice, resp. $\star$-approximate lattice, resp. BH-approximate lattice), then $\Lambda^n$ is an approximate lattice (resp. a uniform approximate lattice, resp. $\star$-approximate lattice, resp. BH-approximate lattice) for every integer $n \geq 1$. Conversely, if there is an integer $n \geq 1$ such that $\Lambda^n$ is an approximate lattice (resp. a uniform approximate lattice, resp. BH-approximate lattice), then $\Lambda$ is an approximate lattice (resp. a uniform approximate lattice, resp. BH-approximate lattice).
\end{lemma}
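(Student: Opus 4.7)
The plan is to reduce everything to two elementary set-theoretic facts. First, since $e \in \Lambda$, we have $\Lambda \subset \Lambda^n$. Second, iterating the defining relation $\Lambda^2 \subset F\Lambda$ and using the symmetry $\Lambda = \Lambda^{-1}$ yields $\Lambda^n \subset \Lambda F'$ for some finite $F' \subset G$. That $\Lambda^n$ inherits the approximate-subgroup property from $\Lambda$ follows by iterating, and uniform discreteness in both directions is handled by Lemma \ref{Lemma: powers of uniformly discrete approximate subgroups are uniformly discrete} together with $\Lambda \subset \Lambda^n$.

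For the uniform approximate lattice and approximate lattice cases, the forward direction is immediate from $\Lambda \subset \Lambda^n$: $G = \Lambda K \subset \Lambda^n K$, and similarly for a set $\mathcal{F}$ of finite left-Haar measure. For the reverse direction, I would use $\Lambda^n \subset \Lambda F'$ to write $G = \Lambda^n K \subset \Lambda(F'K)$, noting that $F'K$ is still compact; the approximate lattice case is identical with $\mathcal{F}$ in place of $K$, since $F'\mathcal{F}$ has finite left-Haar measure by left-invariance of Haar measure. For the $\star$-approximate lattice (forward only), I would appeal to the equivalent characterisation in Proposition \ref{Proposition: Equivalent definitions of star-approximate lattices}: if $X_0 \subset \Lambda^2$ with $X_0^{-1}X_0 \subset \Lambda^2$ witnesses that $\Lambda$ is $\star$, then automatically $X_0 \subset (\Lambda^n)^2$ and $X_0^{-1}X_0 \subset (\Lambda^n)^2$, so the very same invariant measure on $\Omega_{X_0}$ exhibits $\Lambda^n$ as $\star$.

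The BH case is the heart of the argument. To transfer $\mu$-stationary measures between $\Omega_\Lambda$ and $\Omega_{\Lambda^n}$, I would introduce the joining
\[
J := \overline{\{(g\Lambda, g\Lambda^n) : g \in G\}} \subset \Omega_\Lambda \times \Omega_{\Lambda^n},
\]
a compact $G$-invariant subspace whose two coordinate projections are continuous and surjective. Given a $\mu$-stationary probability measure $\nu$ on one factor, the set of probability measures on $J$ with that prescribed marginal is compact, convex and preserved by the affine continuous map $\tau \mapsto \mu * \tau$ (using $\mu * \nu = \nu$), so by a Markov--Kakutani fixed-point argument it contains a $\mu$-stationary lift $\widetilde{\nu}$. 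Pushing forward to the other factor produces the desired $\mu$-stationary measure.

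The main obstacle will be preserving properness, and this is where the two set-theoretic facts pay off. My plan is to show that both $J \cap (\{\emptyset\} \times \Omega_{\Lambda^n})$ and $J \cap (\Omega_\Lambda \times \{\emptyset\})$ collapse to $\{(\emptyset, \emptyset)\}$. For the former, given $g_i\Lambda \to \emptyset$, the inclusion $\Lambda^n \subset F\Lambda$ implies that for every compact $K \subset G$, $g_i\Lambda^n \cap K \neq \emptyset$ entails $g_i\Lambda \cap F^{-1}K \neq \emptyset$, forcing $g_i\Lambda^n$ to escape every compact set as well. For the latter, $\Lambda \subset \Lambda^n$ directly forces $g_i\Lambda \to \emptyset$ whenever $g_i\Lambda^n \to \emptyset$. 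Combined with properness of the starting marginal, both collapse sets have zero $\widetilde{\nu}$-mass, so the pushforward to the opposite factor is proper in both directions, completing the BH case.
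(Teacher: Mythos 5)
Your proof is sound and, since the paper states this lemma as an ``easy observation'' without proof, there is nothing to compare it against line by line; the only substantive part, the BH case, is handled by exactly the mechanism the paper itself develops later as Lemma \ref{Lemma: Stationary measures and relative density} (the topological coupling $\Omega_{X,Y}$ with a stationary lift of a prescribed marginal), so your joining $J$ is a special case of that lemma applied once with $X=\Lambda$, $Y=\Lambda^n$, $C=\{e\}$ and once with $X=\Lambda^n$, $Y=\Lambda$, $C=F'$. One step needs correcting, though it is harmless: in the properness argument you claim that $\Lambda^n \subset F\Lambda$ and $g_i\Lambda^n \cap K \neq \emptyset$ force $g_i\Lambda \cap F^{-1}K \neq \emptyset$. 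This does not follow, because writing $x = g_i f \lambda \in K$ only gives $g_i\lambda \in g_iF^{-1}g_i^{-1}K$, a set that depends on $i$ and need not be uniformly bounded. You should instead use the right-translate covering $\Lambda^n \subset \Lambda F'$ that you already derived from symmetry: then $x = g_i\lambda f' \in K$ gives $g_i\lambda \in KF'^{-1}$, a fixed compact set, and the escape-to-infinity argument goes through. With that substitution the collapse of $J \cap (\{\emptyset\}\times\Omega_{\Lambda^n})$ to $\{(\emptyset,\emptyset)\}$ is correct and the rest of your argument (the elementary coverings, the transfer of uniform discreteness via Lemma \ref{Lemma: powers of uniformly discrete approximate subgroups are uniformly discrete}, the use of Proposition \ref{Proposition: Equivalent definitions of star-approximate lattices} for the $\star$ case, and the Markov--Kakutani construction of the stationary lift) is complete.
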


We emphasize that both directions of Lemma \ref{Lemma: powers of approximate lattices} fail for strong approximate lattices and the converse statement fails for $\star$-approximate lattices. We point to the discussions regarding this issue in \cite[\S 2.1.4]{machado2020apphigherrank} for instance.

\subsubsection{Fundamental domains}
Hrushovski proved a precision on the type of subsets $\mathcal{F}$ one can hope to find in the definition of approximate lattices (Definition \ref{Definition: Approximate lattice}). When $\Lambda$ is a lattice (i.e. $\Lambda$ is an approximate lattice that is closed under multiplication) it is a standard fact that one may find $\mathcal{F}$ as in Definition \ref{Definition: Approximate lattice} such that the family of sets $(\lambda \mathcal{F})_{\lambda \in \Lambda}$ forms a partition of the ambient group $G$. Such a set is often called a \emph{fundamental domain}, see \cite{raghunathan1972discrete} and references therein. While this is not possible when $\Lambda$ is a general approximate lattice, one can still choose $\mathcal{F}$ satisfying weaker disjointness properties: 

\begin{lemma}[Hrushovski, Prop.  A.2, \cite{hrushovski2020beyond}]\label{Lemma: ersatz fundamental domain}
Let $\Lambda$ be an approximate lattice in a second countable locally compact group $G$. Let $\mu_G$ denote a (left-)Haar measure on $G$. There is $\mathcal{F} \subset G$ Borel such that:
\begin{enumerate}
\item $\mu_G(\mathcal{F}) < \infty$;
\item $\Lambda^2 \mathcal{F} =G$;
\item the multiplication map $\Lambda \times \mathcal{F} \rightarrow G$ is one-to-one.
\end{enumerate}
\end{lemma}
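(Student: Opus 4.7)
The plan is to carve $\mathcal{F}$ out of $G$ by a Borel greedy selection along the orbits of the countable subgroup $\langle\Lambda\rangle \subset G$, and then to bound its Haar measure a posteriori against the finite-measure set supplied by the definition of approximate lattice. First, I would fix $\mathcal{F}_0 \subset G$ Borel of finite left-Haar measure with $\Lambda \mathcal{F}_0 = G$, granted by the approximate lattice hypothesis. Since $\Lambda$ is uniformly discrete in the second countable group $G$, $\langle\Lambda\rangle$ is countable; the left action of $\langle\Lambda\rangle$ on $G$ yields a countable Borel equivalence relation $\equiv$, and by Feldman--Moore together with Lusin--Novikov selection I would fix a Borel transversal $T \subset G$ for $\equiv$. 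Enumerating $\langle\Lambda\rangle = \{\delta_0 = e, \delta_1, \delta_2, \ldots\}$, every $g \in G$ can be written as $\delta_i t$ for some $t \in T$ and $i \in \mathbb{N}$ (choosing minimal $i$ in the presence of stabilizers).

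Next, I would run the Borel greedy selection: include $\delta_i t$ in $\mathcal{F}$ exactly when no $j < i$ with $\delta_j t \in \mathcal{F}$ satisfies $\delta_i \delta_j^{-1} \in \Lambda^2$. The predicate is Borel in $(t,i)$, so $\mathcal{F}$ is Borel. By construction $\mathcal{F}\mathcal{F}^{-1} \cap \Lambda^2 = \{e\}$; using $\Lambda = \Lambda^{-1}$, this is equivalent to the injectivity of $\Lambda \times \mathcal{F} \to G$, because an equality $\lambda_1 f_1 = \lambda_2 f_2$ would give $f_1 f_2^{-1} = \lambda_1^{-1}\lambda_2 \in \Lambda^2$, forcing $f_1=f_2$ and $\lambda_1=\lambda_2$. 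Greedy maximality also yields $\Lambda^2 \mathcal{F} = G$: any $g = \delta_i t \notin \mathcal{F}$ must have been excluded because some earlier $\delta_j t \in \mathcal{F}$ satisfies $\delta_i \delta_j^{-1} \in \Lambda^2$, whence $g = (\delta_i\delta_j^{-1})\delta_j t \in \Lambda^2 \mathcal{F}$.

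The finite-measure bound is obtained by comparing $\mathcal{F}$ with $\mathcal{F}_0$. Using $\Lambda \mathcal{F}_0 = G$ and Lusin--Novikov selection (the relation $\{(f,\lambda) \in \mathcal{F} \times \Lambda : \lambda^{-1} f \in \mathcal{F}_0\}$ has countable fibers), I would choose a Borel map $f \mapsto \lambda_f$ on $\mathcal{F}$ with $\lambda_f^{-1} f \in \mathcal{F}_0$, and set $\mathcal{F}_\lambda := \{f \in \mathcal{F} : \lambda_f = \lambda\}$. The sets $\lambda^{-1} \mathcal{F}_\lambda \subset \mathcal{F}_0$ are pairwise disjoint across $\lambda \in \Lambda$: if $f_0 \in \lambda^{-1} \mathcal{F}_\lambda \cap \mu^{-1} \mathcal{F}_\mu$ then $\lambda f_0, \mu f_0 \in \mathcal{F}$ with $\lambda \mu^{-1} \in \Lambda^2$, and the $\Lambda^2$-separation forces $\lambda = \mu$. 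By left invariance of $\mu_G$,
\[ \mu_G(\mathcal{F}) = \sum_{\lambda \in \Lambda} \mu_G(\mathcal{F}_\lambda) = \sum_{\lambda \in \Lambda} \mu_G(\lambda^{-1} \mathcal{F}_\lambda) \leq \mu_G(\mathcal{F}_0) < \infty. \]

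The main obstacle is exactly this last step: turning the set-theoretic construction into a finite-measure one. It hinges on the $\Lambda^2$-separation built into the greedy being strong enough to guarantee that the pieces $\lambda^{-1}\mathcal{F}_\lambda$ fit disjointly inside $\mathcal{F}_0$, which is why the greedy enforces separation at the $\Lambda^2$-level (matching the form $\lambda_1^{-1} \lambda_2$) rather than merely at the $\Lambda$-level; this asymmetry is also the source of the $\Lambda^2$ (rather than $\Lambda$) in the covering conclusion.
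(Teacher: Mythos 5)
Your overall architecture --- a greedy selection enforcing $\mathcal{F}\mathcal{F}^{-1}\cap\Lambda^2=\{e\}$, maximality of the selection giving $\Lambda^2\mathcal{F}=G$, and an a posteriori measure bound obtained by fitting disjoint $\Lambda$-translates of pieces of $\mathcal{F}$ into the finite-measure set $\mathcal{F}_0$ --- is exactly the right one, and your last step is precisely the counting argument the paper isolates as Lemma \ref{Lemma: Bound fundamental domain}. The gap is at the very first structural step: the orbit equivalence relation of the left translation action of the countable group $\langle\Lambda\rangle$ on $G$ does not admit a Borel transversal in general, and neither Feldman--Moore nor Lusin--Novikov produces one. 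A countable Borel equivalence relation has a Borel transversal if and only if it is smooth, and for an approximate lattice which is not a lattice the group $\langle\Lambda\rangle$ is typically \emph{dense} in $G$ (e.g.\ $\langle\Lambda\rangle=\mathbb{Z}[\sqrt{2}]$ for the standard Meyer set in $\mathbb{R}$), so the relation is of Vitali type. Concretely: a transversal $T$ satisfies $TT^{-1}\cap\langle\Lambda\rangle=\{e\}$ and $\langle\Lambda\rangle T=G$; if $T$ were Borel, some translate of $T$ would meet a compact set in positive Haar measure, and Steinhaus's theorem would then place a full identity neighbourhood --- hence a nontrivial element of the dense group $\langle\Lambda\rangle$ --- inside $TT^{-1}$, a contradiction. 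So the set $T$ along which you run the greedy does not exist, and the rest of the construction has nothing to stand on.

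The repair is to replace the (nonexistent) transversal of the $\langle\Lambda\rangle$-orbits by a countable cover of $G$ by small compact boxes, which is what the paper does. Choose a compact identity neighbourhood $V$ with $VV^{-1}\cap\Lambda^2=\{e\}$ (possible because $\Lambda^2$ is uniformly discrete, Lemma \ref{Lemma: powers of uniformly discrete approximate subgroups are uniformly discrete}), write $G=\bigcup_{n\geq 0}Vg_n$ by $\sigma$-compactness, and run your greedy on these pieces: $B_n:=Vg_n\setminus\bigcup_{m<n}\Lambda^2B_m$ and $\mathcal{F}:=\bigcup_n B_n$. Injectivity of $\Lambda\times\mathcal{F}\to G$ within a single box follows from $VV^{-1}\cap\Lambda^2=\{e\}$ via Lemma \ref{Lemma: Characterisation uniform discreteness}, and across boxes from the greedy removal of $\Lambda^2B_m$; the covering $\Lambda^2\mathcal{F}=G$ and the measure bound then go through exactly as in your proposal.
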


The proof is based on the following observation: 

\begin{lemma}[Proof of Prop.  A.2, \cite{hrushovski2020beyond}]\label{Lemma: Bound fundamental domain}
Let $X$ be a countable subset of a locally compact group $G$. Let $\mu_G$ be a left-Haar measure. Let $B_1,B_2 \subset G$ be two Borel subsets such that $XB_1=G$ and the multiplication map $X^{-1}\times B_2 \rightarrow G$ is one-to-one. Then $\mu_G(B_2) \leq \mu_G(B_1)$. 
\end{lemma}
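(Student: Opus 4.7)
The plan is to show that after disjointifying the cover $G = \bigcup_{x \in X} xB_1$ and intersecting with $B_2$, one obtains a decomposition of $B_2$ whose pieces, when translated back by $x^{-1}$, land inside $B_1$ and remain disjoint. Measure-preservation of left translations then forces $\mu_G(B_2) \le \mu_G(B_1)$.

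More concretely, I would proceed as follows. Enumerate $X = \{x_n\}_{n \ge 1}$ (possible since $X$ is countable), and set
$$A_n := x_n B_1 \setminus \bigcup_{k < n} x_k B_1,$$
so that $\{A_n\}_{n \ge 1}$ is a Borel partition of $G$ (using $XB_1 = G$) with $A_n \subseteq x_n B_1$. Next, decompose $B_2$ along this partition by letting $B_{2,n} := B_2 \cap A_n$. Then $B_2 = \bigsqcup_n B_{2,n}$ and, crucially, $x_n^{-1} B_{2,n} \subseteq B_1$ since $B_{2,n} \subseteq x_n B_1$.

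Now I would unpack the injectivity hypothesis on the multiplication map $X^{-1} \times B_2 \to G$: it says exactly that the translates $\{x^{-1} B_2 : x \in X\}$ are pairwise disjoint in $G$. Consequently, the smaller sets $\{x_n^{-1} B_{2,n}\}_{n \ge 1}$ are also pairwise disjoint, and all sit inside $B_1$. Putting these observations together and using the left-invariance of $\mu_G$:
$$\mu_G(B_2) \;=\; \sum_{n \ge 1} \mu_G(B_{2,n}) \;=\; \sum_{n \ge 1} \mu_G(x_n^{-1} B_{2,n}) \;=\; \mu_G\!\left(\bigsqcup_{n \ge 1} x_n^{-1} B_{2,n}\right) \;\le\; \mu_G(B_1),$$
which is the desired inequality.

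There is no genuine obstacle here; the only nontrivial step is the translation of the phrase ``the multiplication map $X^{-1} \times B_2 \to G$ is one-to-one'' into the statement that the translates $x^{-1}B_2$ are pairwise disjoint, which I would spell out briefly. Measurability of the $A_n$ and $B_{2,n}$ is automatic since $X$ is countable and $B_1, B_2$ are Borel.
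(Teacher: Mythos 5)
Your proof is correct and follows essentially the same route as the paper: both arguments translate the pieces $B_2 \cap x B_1$ back into $B_1$ by left-invariance and then use the pairwise disjointness of the sets $x^{-1}B_2$ (the content of the injectivity hypothesis) to bound the total by $\mu_G(B_1)$. The only cosmetic difference is that you disjointify the cover $XB_1 = G$ first, where the paper simply invokes countable subadditivity.
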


\begin{proof}
Since $XB_1=G$ we have $$\mu_G(B_2) \leq \sum_{x \in X}\mu_G(B_2 \cap xB_1) = \sum_{x \in X}\mu_G(x^{-1}B_2 \cap B_1).$$ But the subsets $x^{-1}B_2$ for $x \in X$ are pairwise disjoint. So we get $$\mu_G(B_2) \leq\sum_{x \in X}\mu_G(x^{-1}B_2 \cap B_1)= \mu_G(B_1 \cap \bigcup_{x \in X}x^{-1}B_2) \leq \mu_G(B_1).$$ 
\end{proof}

\begin{proof}[Proof of Lemma \ref{Lemma: ersatz fundamental domain}.]
 Let $V$ be a compact neighbourhood of the identity such that $VV^{-1} \cap \Lambda^2 =\{e\}$ and let $(g_n)_{n \geq 0}$ be a sequence of elements of $G$ such that $G=\bigcup_{n \geq 0} Vg_n$. Define inductively $B_n:=Vg_n \setminus \bigcup_{m<n} \Lambda^2B_m$ and $B:=\bigcup_{n\geq 0}B_n$. Since $\Lambda B_i\cap \Lambda B_j = \emptyset$ for $i \neq j$  and the multiplication map $\Lambda\times V \rightarrow G$ is one-to-one, we have that the multiplication map $\Lambda \times B \rightarrow G$ is one-to-one. Moreover, we have that $Vg_n \subset B_n \cup \bigcup_{m<n} \Lambda^2B_m$ for all $n \geq 0$. So $\Lambda^2B = G$. Since $\Lambda$ is an approximate lattice and according to Lemma \ref{Lemma: Bound fundamental domain}, we have that $\mu_G(B)$ is finite.
\end{proof}

 Lemma \ref{Lemma: Bound fundamental domain} also implies that two such choices of $\mathcal{F}$ must have comparable volume. Thus providing a weak notion of co-volume. 
 
 \subsubsection{Unimodularity}
One of the first results on strong approximate lattices and uniform approximate lattices is the unimodularity of their envelope, see \cite[Theorem 1.11]{bjorklund2016approximate}. Hrushovski also proved a unimodularity theorem for approximate lattices which can be easily deduced from Lemma \ref{Lemma: Bound fundamental domain}.
  
  \begin{proposition}[Hrushovski, Appendix A, \cite{hrushovski2020beyond}] \label{Proposition: Envelope of approximate lattice is unimodular}
 Let $G$ be a locally compact second countable group and suppose that $G$ contains an approximate lattice $\Lambda$. Then $G$ is unimodular.  
 \end{proposition}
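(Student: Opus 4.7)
The strategy is to bound the modular function $\Delta:G \to \mathbb{R}_{>0}$ (defined by $\mu_G(Ag) = \Delta(g)\mu_G(A)$ for every Borel $A$) uniformly in $g \in G$. Once this is achieved, the conclusion comes from the fact that a continuous homomorphism into $\mathbb{R}_{>0}$ whose image is bounded must be trivial: if $\Delta(g) > 1$ for some $g$, then $\Delta(g^n) = \Delta(g)^n \to \infty$, contradicting boundedness. Thus it suffices to produce a constant $C > 0$ such that $\Delta(g) \leq C$ for all $g \in G$.

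First, I would produce the two basic ingredients. Uniform discreteness of $\Lambda$ together with Lemma \ref{Lemma: Characterisation uniform discreteness} yields a symmetric open neighbourhood $W$ of the identity such that the multiplication map $\Lambda \times W \to G$ is injective; since $W$ is a non-empty open set, $0 < \mu_G(W) < \infty$. Note that $\Lambda$ is countable because $G$ is second countable (hence $\sigma$-compact) and $\Lambda$ is uniformly discrete, so locally finite. Next, the definition of an approximate lattice gives a Borel set $\mathcal{F} \subset G$ with $\mu_G(\mathcal{F}) < \infty$ and $\Lambda \mathcal{F} = G$.

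Now I would fix an arbitrary $g \in G$ and apply Lemma \ref{Lemma: Bound fundamental domain} to $X = \Lambda$ (which equals $\Lambda^{-1}$ by symmetry), $B_1 = \mathcal{F}$ and $B_2 = Wg$. The hypothesis $XB_1 = G$ is precisely $\Lambda \mathcal{F} = G$, and the map $X^{-1} \times B_2 = \Lambda \times Wg \to G$ is injective because right-translation by $g$ preserves injectivity. The lemma therefore yields
\[
\Delta(g)\,\mu_G(W) \;=\; \mu_G(Wg) \;\leq\; \mu_G(\mathcal{F}).
\]
Dividing by $\mu_G(W)$, this provides the uniform bound $\Delta(g) \leq \mu_G(\mathcal{F})/\mu_G(W)$ that is independent of $g$. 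By the first paragraph this forces $\Delta \equiv 1$, so $G$ is unimodular.

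The only mild technical point is producing the neighbourhood $W$ with injective multiplication map and strictly positive finite Haar measure, which is handled by Lemma \ref{Lemma: Characterisation uniform discreteness}; everything else is a direct application of Lemma \ref{Lemma: Bound fundamental domain}, which is exactly in the spirit of the hint.
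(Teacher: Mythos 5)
Your proof is correct and follows exactly the route the paper intends: the paper only sketches this proposition as ``easily deduced from Lemma \ref{Lemma: Bound fundamental domain}'', and your application of that lemma with $X=\Lambda$, $B_1=\mathcal{F}$ and $B_2=Wg$ to get $\Delta(g)\mu_G(W)=\mu_G(Wg)\leq\mu_G(\mathcal{F})$ is precisely that deduction (it is also the same mechanism as the paper's proof of Proposition \ref{Proposition: Envelope of a star-approximate lattice is unimodular}, except that there one only controls $\mu_G(V\lambda)$ for $\lambda\in\Lambda$ and must invoke syndeticity, whereas your choice $B_2=Wg$ works for arbitrary $g$ directly). The only cosmetic point is the claim $\mu_G(W)<\infty$ for a non-empty open $W$: either take $W$ relatively compact, or observe that the case $g=e$ of your inequality already forces $\mu_G(W)\leq\mu_G(\mathcal{F})<\infty$.
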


  Under the conditions of Proposition \ref{Proposition: Reformulation approximate lattices} below (for instance when $\Lambda$ is a $\star$-approximate lattice) we can also prove a unimodularity statement. 
  
  \begin{proposition}\label{Proposition: Envelope of a star-approximate lattice is unimodular}
  Let $G$ be a locally compact second countable group. Suppose that $G$ contains a uniformly discrete approximate subgroup such that $\Omega_\Lambda^{ext}$ admits a measure satisfying the conclusions of Proposition \ref{Proposition: Reformulation approximate lattices}. Then $G$ is unimodular.
  \end{proposition}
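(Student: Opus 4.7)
The plan is to transplant the fundamental-domain strategy of Proposition \ref{Proposition: Envelope of approximate lattice is unimodular} to the dynamical setting, using the measure $\nu$ on $\Omega_\Lambda^{ext}$ furnished by Proposition \ref{Proposition: Reformulation approximate lattices} in place of the subset $\mathcal{F}$ of Lemma \ref{Lemma: ersatz fundamental domain}. The central intermediate object I would study is the periodisation
\[
\Phi_\nu(K) := \int_{\Omega_\Lambda^{ext}} |X \cap K|\, d\nu(X), \qquad K \subset G \text{ Borel},
\]
which I expect to be a positive constant multiple of the left Haar measure $\mu_G$.

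To establish this intensity formula, I would first note that $\Phi_\nu$ is a left $G$-invariant Borel measure on $G$: this comes from Fubini applied to the identity $|gX \cap K| = |X \cap g^{-1}K|$ together with the $G$-invariance of $\nu$ asserted by the conclusions of Proposition \ref{Proposition: Reformulation approximate lattices}. Local finiteness of $\Phi_\nu$ is where uniform discreteness enters. By Lemma \ref{Lemma: powers of uniformly discrete approximate subgroups are uniformly discrete} the set $\Lambda^2$ is uniformly discrete, hence locally finite and closed, so $\overline{\Lambda^{-1}\Lambda}=\Lambda^2$. Picking a symmetric, relatively compact neighbourhood $W$ of the identity with $WW^{-1}\cap \Lambda^2=\{e\}$ and using that every $X \in \Omega_\Lambda^{ext}$ satisfies $X^{-1}X\subset \Lambda^2$, Lemma \ref{Lemma: Characterisation uniform discreteness} gives that the multiplication map $X\times W \to G$ is injective for every such $X$. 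Consequently $|X \cap Wg|\le 1$ for all $X\in \Omega_\Lambda^{ext}$ and $g\in G$; covering an arbitrary compact set by finitely many right translates of $W$ shows that $X\mapsto |X\cap K|$ is uniformly bounded on $\Omega_\Lambda^{ext}$ and therefore $\Phi_\nu(K)<\infty$. Uniqueness of Haar measure then yields $\Phi_\nu = C\mu_G$, and properness of $\nu$ forces $C>0$.

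The unimodularity statement then drops out from a single evaluation. For any $g\in G$,
\[
C\,\Delta(g)\,\mu_G(W) \;=\; C\,\mu_G(Wg) \;=\; \Phi_\nu(Wg) \;=\; \int_{\Omega_\Lambda^{ext}} |X\cap Wg|\,d\nu(X) \;\leq\; \nu(\Omega_\Lambda^{ext}) \;<\; \infty,
\]
where $\Delta\colon G\to \mathbb{R}_{>0}$ denotes the modular homomorphism. Hence the continuous homomorphism $\Delta$ has bounded image in $\mathbb{R}_{>0}$, and since the only bounded subgroup of $\mathbb{R}_{>0}$ is trivial, $\Delta\equiv 1$ and $G$ is unimodular.

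The hard point, if any, is extracting the clean formula $\Phi_\nu = C\mu_G$ from the conclusions of Proposition \ref{Proposition: Reformulation approximate lattices}; should that statement instead provide only an ersatz fundamental domain compatible with $\nu$, I would fall back on the proof of Proposition \ref{Proposition: Envelope of approximate lattice is unimodular}, applying Lemma \ref{Lemma: Bound fundamental domain} to that domain and its right translates.
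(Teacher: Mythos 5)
Your final display contains the gap. The inequality $\int_{\Omega_\Lambda^{ext}}|X\cap Wg|\,d\nu(X)\leq \nu(\Omega_\Lambda^{ext})$ rests on the claim that $|X\cap Wg|\leq 1$ for \emph{every} $g\in G$, and that claim is false in general. Injectivity of the multiplication map $X\times W\to G$ (Lemma \ref{Lemma: Characterisation uniform discreteness}) controls intersections with \emph{left} translates: it gives $|X\cap hW|\leq 1$ for all $h$. For the \emph{right} translate $Wg$ — which is what you must use to make the modular function appear via $\mu_G(Wg)=\Delta(g)\mu_G(W)$ — two points $x_i=w_ig$ of $X\cap Wg$ satisfy $x_2^{-1}x_1=g^{-1}w_2^{-1}w_1g\in g^{-1}W^{-1}Wg\cap\Lambda^2$, and for arbitrary $g$ the conjugate $g^{-1}W^{-1}Wg$ can be a huge neighbourhood of the identity meeting $\Lambda^2$ nontrivially (indeed, if $G$ is not unimodular conjugation genuinely expands neighbourhoods, so assuming this intersection is trivial is essentially assuming the conclusion). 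The paper's proof avoids this by taking $g=\lambda\in\Lambda$ and choosing $V$ with $V^{-1}V\cap\Lambda^4=\{e\}$, so that $\lambda^{-1}V^{-1}V\lambda\cap\Lambda^2=\{e\}$; this yields $\Delta_G$ bounded \emph{on $\Lambda$ only}. One then needs a separate step — Proposition \ref{Proposition: star-approximate lattices are bi-syndetic}, which uses the lower bound $\frac{1}{C}\mu_G\leq\mathcal{P}_\Lambda^*\nu$ to produce a compact $K$ with $V\Lambda K=G$ — to propagate boundedness of $\Delta_G$ from $\Lambda$ to all of $G$. Your proposal omits this entire second half.

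A secondary issue: the conclusions of Proposition \ref{Proposition: Reformulation approximate lattices} do \emph{not} assert that $\nu$ is $G$-invariant; they only give the two-sided comparison $\frac{1}{C}\mu_G\leq\mathcal{P}_\Lambda^*\nu\leq C\mu_G$. So your derivation of the exact intensity formula $\Phi_\nu=C\mu_G$ via Fubini and invariance does not apply (that is the content of Corollary \ref{Corollary: Pull-back through periodization of G-invariant measure is a Haar-measure}, which needs invariance). This by itself is harmless — the two-sided bound is all the argument requires, and it is what the paper uses directly — but it is worth flagging because the properness and invariance you invoke are not hypotheses here, and the whole point of this proposition is that it covers the non-invariant measures produced by the ``only if'' direction of Proposition \ref{Proposition: Reformulation approximate lattices}.
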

  
  We delay its proof to \S \ref{Subsection: The Periodization Map}; while in spirit very similar to the proof of Proposition \ref{Proposition: Envelope of approximate lattice is unimodular}, we need to introduce more tools. Both unimodularity theorems will be absolutely necessary below to establish Proposition \ref{Proposition: Detail resultat principal}. This is related to the lack of symmetry alluded to in Remark \ref{Remark: Lack of symmetry}.

\subsubsection{Finite co-volume and commensurability}

We express now a simple and useful commensurability criterion in terms of uniform discreteness and finite co-volume.

 \begin{lemma}[ Lemma A.4, \cite{hrushovski2020beyond}]\label{Corollary: Towers of star-approximate subsets are commensurable}
  If $\Lambda_1 \subset \Lambda_2$ are two approximate lattices in a locally compact second countable group $G$, then $\Lambda_1$ and $\Lambda_2$ are commensurable.
 \end{lemma}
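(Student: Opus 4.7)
The plan is to show that $\Lambda_2$ is contained in finitely many left translates of $\Lambda_1$; the reverse containment $\Lambda_1 \subset \Lambda_2$ is given by hypothesis, and the corresponding covers by right translates follow by inversion using $\Lambda_i = \Lambda_i^{-1}$. The idea is to bound, via a volume estimate, the number of ``right $\Lambda_1^2$-cosets'' represented inside $\Lambda_2$, and then use maximality to convert this bound into a finite cover.

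First I would fix an ersatz fundamental domain $\mathcal{F}_1$ for $\Lambda_1$ as in Lemma \ref{Lemma: ersatz fundamental domain}, so that $\Lambda_1 \times \mathcal{F}_1 \to G$ is injective, $\Lambda_1^2 \mathcal{F}_1 = G$, and $\mu_G(\mathcal{F}_1) < \infty$. Choose a finite $F'' \subset G$ with $\Lambda_1^2 \subset \Lambda_1 F''$ (using that $\Lambda_1$ is a symmetric approximate subgroup) and set $B_1 := F'' \mathcal{F}_1$; then $\Lambda_1 B_1 = G$ and $\mu_G(B_1) \leq |F''| \mu_G(\mathcal{F}_1) < \infty$ by left-invariance of the Haar measure. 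Since $\Lambda_2^2$ is itself an approximate lattice by Lemma \ref{Lemma: powers of approximate lattices}, I also fix an ersatz fundamental domain $\mathcal{F}_2$ for $\Lambda_2^2$ with $\Lambda_2^2 \times \mathcal{F}_2 \to G$ injective and $\mu_G(\mathcal{F}_2) < \infty$.

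Next, select a maximal family $\lambda_1, \ldots, \lambda_N \in \Lambda_2$ with $\lambda_i \lambda_j^{-1} \notin \Lambda_1^2$ for $i \neq j$, and set $B_2 := \bigsqcup_{i=1}^N \lambda_i \mathcal{F}_2$, where the disjointness of the union follows from injectivity of $\Lambda_2^2 \times \mathcal{F}_2 \to G$. The crucial step is to check that the multiplication $\Lambda_1 \times B_2 \to G$ is injective: if $\gamma_1 \lambda_{i_1} f_1 = \gamma_2 \lambda_{i_2} f_2$ with $\gamma_k \in \Lambda_1$ and $f_k \in \mathcal{F}_2$, then each $\gamma_k \lambda_{i_k}$ lies in $\Lambda_1 \Lambda_2 \subset \Lambda_2^2$, so injectivity of $\Lambda_2^2 \times \mathcal{F}_2 \to G$ forces $\gamma_1 \lambda_{i_1} = \gamma_2 \lambda_{i_2}$ and $f_1 = f_2$; hence $\lambda_{i_1} \lambda_{i_2}^{-1} = \gamma_1^{-1} \gamma_2 \in \Lambda_1^2$, which forces $i_1 = i_2$ by the defining property of our family, and then $\gamma_1 = \gamma_2$.

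Now Lemma \ref{Lemma: Bound fundamental domain} applied with $X = \Lambda_1 = \Lambda_1^{-1}$, $B_1$, and $B_2$ yields $N \mu_G(\mathcal{F}_2) = \mu_G(B_2) \leq \mu_G(B_1) < \infty$, so $N$ is bounded independently of the family. By maximality, every $\lambda \in \Lambda_2$ satisfies $\lambda \lambda_i^{-1} \in \Lambda_1^2$ for some $i$, so $\Lambda_2 \subset \Lambda_1^2 \{\lambda_1, \ldots, \lambda_N\} \subset \Lambda_1 F'' \{\lambda_1, \ldots, \lambda_N\}$, a finite cover by left translates of $\Lambda_1$. Inverting and using symmetry gives a finite cover by right translates as well, which together with the trivial inclusion $\Lambda_1 \subset \Lambda_2$ establishes commensurability. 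The main obstacle I anticipate is arranging the injectivity of $\Lambda_1 \times B_2 \to G$ cleanly: this is precisely why I use an ersatz fundamental domain for $\Lambda_2^2$ rather than for $\Lambda_2$, since the relevant products $\gamma_k \lambda_{i_k}$ naturally live in $\Lambda_2^2$ and would otherwise escape the injectivity hypothesis.
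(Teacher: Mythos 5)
Your argument is correct, and it is precisely the ``simple counting argument involving Lemma \ref{Lemma: Bound fundamental domain}'' that the paper attributes to Hrushovski's Appendix A rather than writing out: build a set of finite measure covered by $\Lambda_1$-translates on one side, pack disjoint translates of a fundamental domain for $\Lambda_2^2$ indexed by $\Lambda_1^2$-inequivalent points of $\Lambda_2$ on the other, and compare volumes. Two small points to tidy: you need $\mu_G(\mathcal{F}_2)>0$ to extract the bound on $N$ from $N\mu_G(\mathcal{F}_2)\leq\mu_G(B_1)$, which holds because $\Lambda_2^4\mathcal{F}_2=G$ and $\Lambda_2^4$ is countable; and in the paper's terminology (Definition \ref{Definition: Commensurability}) the inclusion $\Lambda_2\subset\Lambda_1 F$ is a cover by \emph{right} translates, with your inversion step supplying the left cover, so commensurability follows exactly as you conclude.
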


The proof relies on a simple counting argument involving Lemma \ref{Lemma: Bound fundamental domain} and we refer the interested reader to \cite[Appendix A]{hrushovski2020beyond}.

\section{Laminarity of $\star$-approximate lattices and transverse measures}\label{Subsection: Laminarity of strong approximate lattices and transverse measures}

In this subsection, we study the link between strong approximate lattices and model sets, eventually proving Theorem \ref{Proposition: Detail resultat principal}. Our approach is based on the combination of a dynamical viewpoint developed in \cite{BjorklundHartnickKarasik} and a combinatorial argument due to Massicot--Wagner \cite{MR3345797} inspired by results of Sanders \cite{MR2911137} and Croot--Sisask \cite{MR2738997} and recently extended in \cite{hrushovski2019amenability}. 

Let $G$ be a locally compact second countable group $G$ and consider a strong approximate lattice $\Lambda \subset G$. Fix a proper $G$-invariant Borel probability measure $\nu$ on $\Omega_{\Lambda}$. Given any element $g \in G$ define the \emph{extended canonical transverse} as 
$$\tilde{\mathcal{T}}_g:=\{X \in \Omega_{\Lambda} :  X\cap g \Lambda \neq \emptyset\}.$$ 
It satisfies two useful properties: 
 \begin{enumerate}
 \item the subset $G \cdot \mathcal{T}_g$ is Borel and has full measure in $\Omega_{\Lambda}^{ext}$ - it is in fact equal to $\Omega_{\Lambda}^{ext} \setminus \{\emptyset\}$; 
 \item there is a neighbourhood of the identity $W \subset G$ such that the obvious map $W \times \mathcal{T}_g \rightarrow \Omega_{\Lambda}^{ext}$ is one-to-one.
 \end{enumerate} 
 
  More generally, any subset of $\Omega_{\Lambda}$ satisfying (1) and (2) is a \emph{cross-section}, see (e.g. \cite{BjorklundHartnickKarasik}). The canonical transverse captures a lot of the dynamics on the space $\Omega_{\Lambda}$. This is reflected in the remarkable fact that when $\Omega_\Lambda$ is equipped with a $G$-invariant measure we are able to restrict this measure to the canonical transverse in a meaningful way. 

\begin{lemma}[Existence of transverse measures, \S 4, \cite{BjorklundHartnickKarasik}]\label{Lemma: Existence of transverse measures, strong approximate lattices}
Let $\Lambda$ be a  $\star$-approximate lattice in a second countable locally compact group $G$. Fix $\nu$ a proper $G$-invariant Borel probability measure on $\Omega_{\Lambda}$. For every $g  \subset G$ there is a finite Borel measure $\eta_g$ defined on $\tilde{\mathcal{T}}_g$ such that for all $B \subset \tilde{\mathcal{T}}_g$ and $h,g' \in G$ such that $hB \subset \tilde{\mathcal{T}}_{g'}$ we have $\eta_{g'}(hB)=\eta_{g}(B)$. 
\end{lemma}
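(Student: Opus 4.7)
The approach is to define $\eta_g$ as the transverse ``slice'' of $\nu$ across the canonical transverse, using the wandering neighbourhood property of $\tilde{\mathcal{T}}_g$, and to read off the invariance relation from the $G$-invariance of $\nu$ together with the unimodularity of $G$ given by Proposition \ref{Proposition: Envelope of a star-approximate lattice is unimodular}.

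First I would fix, for each $g\in G$, an open relatively compact symmetric neighbourhood of the identity $W\subset G$ small enough that the multiplication map $W\times\tilde{\mathcal{T}}_g\to\Omega_{\Lambda}^{ext}$, $(w,X)\mapsto wX$, is injective; this is exactly the ``wandering'' property (2) of $\tilde{\mathcal{T}}_g$ recorded just before the lemma and it follows from uniform discreteness of $\Lambda$. The image $W\tilde{\mathcal{T}}_g$ is then a Borel subset of $\Omega_{\Lambda}^{ext}$ and the multiplication map is a Borel isomorphism onto it. I would then set, for Borel $B\subset\tilde{\mathcal{T}}_g$,
$$\eta_g(B) \;:=\; \frac{\nu(WB)}{\mu_G(W)},$$
where $\mu_G$ denotes a Haar measure on $G$. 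Finiteness of $\eta_g$ will follow from $\nu(\Omega_\Lambda^{ext})=1$.

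The substance of the proof is to show that $\eta_g(B)$ does not depend on the choice of $W$. The key observation is that the left $G$-invariance of $\nu$ forces the pullback $\tilde{\nu}_W$ of $\nu|_{W\tilde{\mathcal{T}}_g}$ to $W\times\tilde{\mathcal{T}}_g$ to be invariant under left translation in the first coordinate: if $h\in G$ is small enough that $U,hU\subset W$ then
$$\tilde{\nu}_W(hU\times B)\;=\;\nu(hU\cdot B)\;=\;\nu(U\cdot B)\;=\;\tilde{\nu}_W(U\times B).$$
A standard disintegration argument against $\mu_G|_W$ then yields a unique $\sigma$-finite Borel measure $\eta_g$ on $\tilde{\mathcal{T}}_g$ with $\tilde{\nu}_W=\mu_G|_W\otimes\eta_g$; in particular $\nu(WB)=\mu_G(W)\eta_g(B)$ for every wandering $W$, so the displayed definition above is $W$-independent.

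Once $\eta_g$ is in place, the invariance property is almost formal. Given $hB\subset\tilde{\mathcal{T}}_{g'}$ pick a wandering neighbourhood $W'$ for $\tilde{\mathcal{T}}_{g'}$; a direct computation using injectivity on $W'\times hB$ shows that $h^{-1}W'h$ is wandering for $B$ as a subset of $\tilde{\mathcal{T}}_g$. Using $W$-independence of $\eta_g$ together with left $G$-invariance of $\nu$,
$$\mu_G(W')\,\eta_{g'}(hB)\;=\;\nu(W'hB)\;=\;\nu(h^{-1}W'h\cdot B)\;=\;\mu_G(h^{-1}W'h)\,\eta_g(B),$$
and the unimodularity of $G$ (Proposition \ref{Proposition: Envelope of a star-approximate lattice is unimodular}) gives $\mu_G(h^{-1}W'h)=\mu_G(W')$, yielding $\eta_{g'}(hB)=\eta_g(B)$.

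The hard part will be the disintegration step producing the product decomposition $\tilde{\nu}_W=\mu_G|_W\otimes\eta_g$: local translation invariance in the $W$-coordinate must be upgraded to a global product statement, and some care is needed to verify the measurability of the disintegration kernel and to cover $W$ by small translates where the elementary invariance argument applies (this is where second countability enters). Once the product structure is secured, the invariance identity reduces to the unimodularity of $G$ — consistent with the warning in Remark \ref{Remark: Lack of symmetry} that, without unimodularity, any such transverse relation would be spoiled by the modular function.
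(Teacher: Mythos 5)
The paper does not actually prove this lemma --- it is imported verbatim from Bj\"{o}rklund--Hartnick--Karasik \cite{BjorklundHartnickKarasik}, \S 4 --- so there is no internal proof to compare against; what you have written is a self-contained reconstruction, and it follows the standard cross-section/transverse-measure (Palm measure) construction that the cited reference uses. Your outline is correct: $\eta_g(B):=\nu(WB)/\mu_G(W)$ is well defined and countably additive precisely because the wandering property makes the sets $WB_n$ pairwise disjoint and (by Lusin--Souslin, injective continuous image of a Borel set) Borel; the local left-invariance of the pullback of $\nu$ to $W\times\tilde{\mathcal{T}}_g$ upgrades to the product decomposition $\mu_G|_W\otimes\eta_g$; and the final identity $\eta_{g'}(hB)=\eta_g(B)$ does reduce to $\mu_G(h^{-1}W'h)=\mu_G(W')$, i.e.\ to unimodularity, which is available non-circularly from Proposition \ref{Proposition: Envelope of a star-approximate lattice is unimodular} (its hypotheses hold for a $\star$-approximate lattice via Corollary \ref{Corollary: Pull-back through periodization of G-invariant measure is a Haar-measure}, and its proof does not use the present lemma). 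Two points deserve to be made explicit rather than deferred. First, the neighbourhood $h^{-1}W'h$ you produce is wandering only for the particular set $B$, not for all of $\tilde{\mathcal{T}}_g$, so the $W$-independence must be proved in the stronger form ``$\nu(VB)=\mu_G(V)\,\eta_g(B)$ for every neighbourhood $V$ of $e$ that is wandering for $B$''; this follows by intersecting $V$ with the global wandering neighbourhood $W$ and comparing the two product decompositions on $(V\cap W)\times B$, which agree since $\mu_G(V\cap W)>0$. Second, the step ``locally translation-invariant on $W$ implies proportional to $\mu_G|_W$'' for the fibre marginals $U\mapsto\nu(UB)$ is a localized uniqueness-of-Haar-measure argument (the invariance holds for \emph{all} $h\in G$ with $U,hU\subset W$, so congruent small sets anywhere in $W$ get equal mass, and a covering/differentiation argument finishes); this is standard but is exactly the content being borrowed from \cite{BjorklundHartnickKarasik}, so if you intend the proof to be self-contained it should be written out.
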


Lemma \ref{Lemma: Existence of transverse measures, strong approximate lattices} asserts that $\nu$ can indeed be restricted to the canonical transverse associated with $g \in G$. It also explains how the invariance of $\nu$ impacts the family of measures $(\eta_g)_{g\in G}$. 

Let us now move on to the combinatorial part of this subsection. 

\begin{lemma}[Massicot--Wagner-type argument, consequence of Prop. 2.11, \cite{hrushovski2019amenability}]\label{Lemma: Massicot--Wagner for actions}
Let $\Lambda$ be a $\star$-approximate lattice in a locally compact second countable group $G$. Suppose that there are a measured set $(X,m)$ with $m$ finitely additive and equipped with a measure preserving action of $\langle \Lambda \rangle$, and a Borel subset $B \subset X$ such that $m(B) > 0 $, $\Lambda \cdot B$ is also measurable, $m(\Lambda\cdot B) < \infty$ and the \emph{stabiliser} $S:=\{\gamma \in \langle \Lambda \rangle : \gamma(\Lambda\cdot B) \cap (\Lambda\cdot B) \neq \emptyset\}$ is covered by finitely many translates of $\Lambda$. Then $S$ has a good model. 
\end{lemma}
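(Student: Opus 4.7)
The plan is to verify the criterion of Lemma \ref{Lemma: Charac. good models}. I will produce, inside $S$, a descending chain of approximate subgroups commensurable with $S$, each contained in the square of the next, thereby providing a good model for $S$.

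Set $B_0 := \Lambda \cdot B$, which is measurable with $0 < m(B_0) < \infty$. For every $\epsilon > 0$, define the \emph{approximate stabiliser}
$$ S_\epsilon := \bigl\{ \gamma \in \langle \Lambda \rangle : m(\gamma B_0 \triangle B_0) < \epsilon \cdot m(B_0) \bigr\}. $$
Since the $\langle \Lambda \rangle$-action is measure-preserving, $S_\epsilon$ is symmetric, contains $e$, and satisfies $S_{\epsilon/2} \cdot S_{\epsilon/2} \subset S_\epsilon$ by the triangle inequality for symmetric differences. Moreover, when $\epsilon < 2$ one has $m(\gamma B_0 \cap B_0) > (1 - \epsilon/2) m(B_0) > 0$, so $S_\epsilon \subset S$.

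The heart of the proof is to show that $S_\epsilon$ is commensurable with $S$ for every sufficiently small $\epsilon > 0$. This will follow from a Croot--Sisask / Sanders pigeonhole in the spirit of Massicot--Wagner \cite{MR3345797}. For any large integer $k$, the family $\{\gamma B_0 : \gamma \in \Lambda^k\}$ lies in $\Lambda^{k+1} B$, which by the approximate-subgroup axiom on $\Lambda$ is covered by finitely many translates of $B_0$, hence has total $m$-mass at most $C_k \cdot m(B_0)$. A double counting of pointwise incidences then shows that a positive proportion of pairs $(\gamma_1, \gamma_2) \in \Lambda^k \times \Lambda^k$ satisfy $m(\gamma_1 B_0 \cap \gamma_2 B_0) \geq c_k \cdot m(B_0)$ for some $c_k > 0$, equivalently $\gamma_1^{-1}\gamma_2 \in S_{\epsilon^*}$ with $\epsilon^* := 2(1 - c_k)$. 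A standard Ruzsa-type covering argument then yields a finite $E$ with $\Lambda^k \subset E \cdot S_{\epsilon^*}$; together with the hypothesis $S \subset F\Lambda \subset F\Lambda^k$, this gives $S \subset (FE) \cdot S_{\epsilon^*}$. The inclusion $S_{\epsilon/2}^2 \subset S_\epsilon$ then propagates this commensurability to $S_\epsilon$ for every $0 < \epsilon \leq \epsilon^*$.

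With commensurability in hand, fix $\epsilon_0 \leq \epsilon^*$ with $\epsilon_0 < 1$ and set $\Lambda_n := S_{\epsilon_0/2^n}$. Each $\Lambda_n$ is an approximate subgroup commensurable with $\Lambda$, since $S_{\epsilon_0/2^n}^2 \subset S_{\epsilon_0/2^{n-1}} \subset F_n \cdot S_{\epsilon_0/2^n}$ by commensurability of both with $S$, and satisfies $\Lambda_{n+1}^2 \subset \Lambda_n$, while $\Lambda_0^2 \subset S_{2\epsilon_0} \subset S$. Lemma \ref{Lemma: Charac. good models}, applied to $\Lambda_0 = S_{\epsilon_0}$, produces a good model $f \colon \langle \Lambda \rangle \to H$; since $S \subset F' \Lambda_0$, the image $f(S)$ is relatively compact, and the preimage under $f$ of a sufficiently small neighbourhood of $e \in H$ lies inside $\Lambda_0 \subset S$, so $f$ witnesses a good model for $S$. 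The main obstacle is the commensurability step: one must port the classical Croot--Sisask/Sanders argument -- typically formulated for counting measure on $\Lambda^n$ -- to the finitely additive measure $m$ on the $\langle \Lambda \rangle$-space $X$, using the hypotheses $m(\Lambda B) < \infty$ and $S \subset F\Lambda$ to close the pigeonhole loop.
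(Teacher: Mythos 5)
There is a genuine gap, and it sits exactly at the step you flag as ``the heart of the proof'': the claim that $S_\epsilon$ is commensurable with $S$ for \emph{every} sufficiently small $\epsilon>0$. A Croot--Sisask/Sanders pigeonhole applied to the fixed set $B_0=\Lambda\cdot B$ only produces a single threshold: the double counting gives that a positive proportion of pairs satisfies $m(\gamma_1 B_0\cap\gamma_2 B_0)\geq c_k\, m(B_0)$ with $c_k$ of order $1/C_k$, hence $\epsilon^*=2(1-c_k)$ is close to $2$, not small, and nothing forces $S_\epsilon$ to remain generic as $\epsilon\to 0$. Your proposed remedy --- ``the inclusion $S_{\epsilon/2}^2\subset S_\epsilon$ propagates the commensurability to every $0<\epsilon\leq\epsilon^*$'' --- runs the wrong way: that inclusion shows largeness of $S_{\epsilon/2}$ implies largeness of $S_\epsilon$, not conversely. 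In general $S_\epsilon(B_0)$ can collapse to $\{e\}$ for small $\epsilon$ (e.g.\ if every nontrivial $\gamma$ moves $B_0$ by at least a fixed fraction of its measure in symmetric difference), while $S$ remains large; so the descending chain $\Lambda_n=S_{\epsilon_0/2^n}$ need not consist of sets commensurable with $\Lambda$, and Lemma \ref{Lemma: Charac. good models} cannot be invoked.

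This is precisely the obstruction the Massicot--Wagner minimization is designed to overcome, and it is why the paper's proof does \emph{not} keep $B_0$ fixed. There, one works with the family $\mathcal{E}$ of subsets $X\subset\Lambda$ whose hull carries a proper invariant measure, which is closed under $X\mapsto X\cap Xg$ for $g\in S(X)$; for each $n$ one picks $X_n\in\mathcal{E}$ nearly minimizing $m(X^{-1}\cdot B)$, and the near-minimality forces $m\bigl((g^{-1}X_n^{-1}\cap X_n^{-1})\cdot B\bigr)\geq(1+1/n)^{-1}m(X_n^{-1}\cdot B)$ for all $g\in S(X_n)$, whence $S(X_n)^{n-1}\subset S$. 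Genericity of $S(X_n)$ (it is an approximate lattice commensurable with $\Lambda$) is supplied by Lemma \ref{Lemma: measurably repetitive sets have large stabilizers}, not by a covering argument on $B_0$. If you want to salvage your approach you must likewise re-minimize over a suitable family of sets at each scale $\epsilon$; with $B_0$ fixed the statement you need is simply false.
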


In the following lemma, we show that a certain notion of \emph{stabilizer} tailored to our situation is generic (see \cite[Prop. 2.11]{hrushovski2019amenability} and \cite[\S 1 and proof of Thm. 12]{MR3345797} for analogous discussions in their respective frameworks). 
 \begin{lemma}\label{Lemma: measurably repetitive sets have large stabilizers}
Let $\Lambda$ be a $\star$-approximate lattice in a locally compact second countable group $G$. Let $X\subset \Lambda$ be such that $\Omega_X$ admits a proper $G$-invariant Borel probability measure. Suppose that $\Lambda^2 \times W \rightarrow G$ is one-to-one where $W$ denotes a neighbourhood of the identity.  Then $\Lambda$ is covered by finitely many left-translates of the subset $$
 S(X):=\{g \in G: \exists \nu \text{proper $G$-inv. Borel prob. measure on} \Omega_{Xg \cap X} \}.$$
  Moreover, $S(X)$ is an approximate lattice. 
 \end{lemma}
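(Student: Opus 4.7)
The plan is to exploit the transverse measure furnished by Lemma~\ref{Lemma: Existence of transverse measures, strong approximate lattices} applied to the given invariant measure $\nu$ on $\Omega_X$ in order to (i) build proper $G$-invariant probability measures on $\Omega_{Xg \cap X}$ for $g$ in a suitable ``large'' set $T \subset \Lambda^2$, and (ii) establish that $T$ covers $\Lambda$ up to finitely many left-translates. The ``moreover'' claim will then follow from commensurability of $S(X)$ with $\Lambda^2$ via standard approximate-subgroup calculus.

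Concretely, let $\eta$ denote the transverse measure on $\mathcal{T}_X := \{Y \in \Omega_X : e \in Y\}$. For each $g \in G$, I would consider the Borel $G$-equivariant map $\phi_g \colon \Omega_X \to \mathcal{C}(G)$, $Y \mapsto Y \cap Yg$. The pushforward $\phi_{g*}\nu$ is $G$-invariant, and a continuity/measurability discussion exploiting the uniform discreteness of $\Lambda^2$ (Lemma~\ref{Lemma: powers of uniformly discrete approximate subgroups are uniformly discrete}) places its support in $\Omega_{Xg \cap X}$, where on the dense orbit $G\cdot X$ it agrees with the pushforward of the orbit map onto $G \cdot (X \cap Xg)$. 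This pushforward is proper iff $\nu(\{Y : Y \cap Yg \neq \emptyset\}) > 0$. Linking this to transverse data via the Palm identity
\[
\int_{\Omega_X}\bigl|\{y \in Y \cap B : yg \in Y\}\bigr|\,d\nu(Y) \;=\; \mu_G(B)\,\eta(\mathcal{T}_X \cap U^{\{g\}})
\]
(valid for $B \subset G$ relatively compact Borel) shows that $\eta(\mathcal{T}_X \cap U^{\{g\}}) > 0$ suffices for $g \in S(X)$. Hence setting $T := \{g \in G : \eta(\mathcal{T}_X \cap U^{\{g\}}) > 0\}$ yields $T \subset S(X)$; and since $e \in Y$ for $Y \in \mathcal{T}_X$ forces $Y \subset Y^{-1}Y \subset \overline{X^{-1}X}$, one has $T \subset \overline{X^{-1}X} \subset \Lambda^2$.

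To show $\Lambda \subset F\cdot T$ for some finite $F$, I would apply Cauchy--Schwarz to the Siegel transform $f^*(Y) = \sum_{y \in Y} f(y)$ with $f = \mathbb{1}_B$ and expand the second moment via the Mecke/Palm formula:
\[
\bigl(\eta(\mathcal{T}_X)\,\mu_G(B)\bigr)^2 \;\leq\; \int (f^*)^2\, d\nu \;=\; \sum_{k \in T} \eta(\mathcal{T}_X \cap U^{\{k\}})\,\mu_G(B \cap Bk^{-1}).
\]
Choosing $B$ as a suitable finite union of $\Lambda$-translates of the ersatz fundamental domain $\mathcal{F}$ of Lemma~\ref{Lemma: ersatz fundamental domain}, which is available thanks to unimodularity of $G$ provided by Proposition~\ref{Proposition: Envelope of a star-approximate lattice is unimodular}, and combining with the uniform-discreteness bound on $|\Lambda^2 \cap BB^{-1}|$, the density estimate converts into the required finite covering $\Lambda \subset F\cdot T \subset F\cdot S(X)$.

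For the ``moreover'' part: since $g \in S(X)$ forces $Xg \cap X \neq \emptyset$, one has $S(X) \subset X^{-1}X \subset \Lambda^2$, so $S(X)$ is uniformly discrete. Symmetry of $S(X)$ follows because the right-multiplication $Z \mapsto Zg^{-1}$ is a $G$-equivariant homeomorphism $\Omega_{Xg \cap X} \to \Omega_{Xg^{-1} \cap X}$ transporting invariant measures. The containments $S(X) \subset \Lambda^2$ and $\Lambda \subset F\cdot S(X)$ combined with the approximate-subgroup property of $\Lambda$ yield $S(X)^2 \subset \Lambda^4 \subset F'\Lambda \subset F'F\cdot S(X)$, making $S(X)$ an approximate subgroup commensurable with $\Lambda^2$; and $\Lambda\mathcal{F} = G$ together with $\Lambda \subset F\cdot S(X)$ gives $S(X)\cdot(F^{-1}\mathcal{F}) = G$ with $|F|\mu_G(\mathcal{F}) < \infty$, so $S(X)$ is an approximate lattice. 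The main obstacle is the covering step: Cauchy--Schwarz only directly yields a density estimate, and converting this into a finite-covering property in a potentially non-amenable ambient group requires a careful selection of the test set $B$ that exploits the $\star$-approximate lattice structure of $\Lambda$ (unimodularity plus the fundamental-domain-like $\mathcal{F}$).
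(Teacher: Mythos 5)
Your overall strategy---push $\nu$ forward under $Y\mapsto Y\cap Yg$ to manufacture proper invariant measures on $\Omega_{Xg\cap X}$, then show the set of good $g$ is large---matches the paper's, and your treatment of the ``moreover'' part ($S(X)\subset X^{-1}X\subset\Lambda^2$, symmetry via $Z\mapsto Zg^{-1}$, commensurability with $\Lambda$) is essentially fine. But the central claim, that finitely many left-translates of $S(X)$ cover $\Lambda$, is not actually established, as you yourself flag. Your Cauchy--Schwarz/second-moment inequality only yields a lower bound of the shape $|T\cap BB^{-1}|\geq c\,\mu_G(B)$, i.e.\ a positive-density statement for $T$ inside $\Lambda^2$; this is strictly weaker than the covering statement $\Lambda\subset F\cdot S(X)$, and no choice of $B$ as a union of translates of the ersatz fundamental domain obviously bridges the two. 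If instead you try to run the second moment against $B=\bigcup_i V\lambda_i$ for a putative $S(X)$-separated family $\lambda_1,\dots,\lambda_m\in\Lambda$, the off-diagonal terms live in the conjugated neighbourhoods $\lambda_i^{-1}V^{-1}V\lambda_j$, so the admissible $V$ shrinks with the family and the resulting bound on $m$ becomes circular. Symptomatically, your argument never uses the hypothesis that $\Lambda^2\times W\to G$ is one-to-one, which is precisely the input needed at this step.

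The paper closes this gap with a first-moment inclusion--exclusion rather than a second moment, and without invoking transverse measures at all. Take a maximal family $g_1,\dots,g_m\in\Lambda$ such that $\Omega_{Xg_i\cap Xg_j}$ admits no proper invariant measure for $i\neq j$. Then the pushforward of $\nu$ under $Y\mapsto Yg_i\cap Yg_j$ must equal $\delta_{\emptyset}$, so the overlaps $Yg_i^{-1}\cap Yg_j^{-1}$ are $\nu$-a.s.\ empty and $\sum_i|Yg_i^{-1}\cap U|\leq|Y\Lambda\cap U|$ almost surely; the right-hand side is bounded deterministically by $\mu_G(UW)/\mu_G(W)$ using the injectivity of $\Lambda^2\times W\to G$, while integrating the left-hand side against $\nu$ gives $m\,\mu_G(U)$ via the periodization-is-Haar identity and unimodularity. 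This bounds $m$ uniformly, and maximality immediately yields $\Lambda\subset\bigcup_i g_i^{-1}S(X)$. I would replace your Cauchy--Schwarz step by this argument (or supply the missing passage from density to covering, which I do not see how to do directly).
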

 
 \begin{proof}
 Fix a proper $G$-invariant Borel probability measure $\nu$ on $\Omega_X$. Let $m \in \mathbb{N}$ and $g_1, \ldots, g_m \in \Lambda$ be such that $\Omega_{Xg_i \cap Xg_j}$ admits no proper $G$-invariant Borel probability measure for all $i\neq j$. The natural map 
 \begin{align*}
 \phi_{ij}:\Omega_X &\longrightarrow \Omega_{Xg_i \cap Xg_j} \\
 Y &\longmapsto Yg_i \cap Yg_j
 \end{align*}
 is $G$-invariant and Borel. So $\phi_{ij}^*\nu$ is a $G$-invariant Borel probability measure on $\Omega_{Xg_i \cap Xg_j}$. Hence, $\phi_{ij}^*\nu=\delta_{\emptyset}$. 
 Now, for every $U \subset G$ open we have 
 \begin{align*}
  \mu_G(UW)/\mu_G(W) & \geq \sum_{y \in X\Lambda}\mathbf{1}_U(y) \\ 
  &\geq  \sum_{y \in \bigcup_i Xg_i^{-1}}\mathbf{1}_U(y) \\
  &\geq \sum_i \sum_{y \in  Xg_i^{-1}}\mathbf{1}_U(y) - \sum_{i<j} \sum_{y \in  Xg_i^{-1} \cap Xg_j^{-1}}\mathbf{1}_U(y).
  \end{align*}
  Integrating against $\nu$ and using $\phi_{ij}^*\nu=\delta_{\emptyset}$ we find 
  $$\mu_G(UW)/\mu_G(W) \geq \sum_{i=1}^m \mu_G(Ug_i) = m \mu_G(U)$$
 where the equality is a consequence of the unimodularity of $G$ (Proposition \ref{Proposition: Envelope of a star-approximate lattice is unimodular}).  So $m \leq \mu_G(W)^{-1}\mu_G(UW)/\mu_G(U) < \infty$. 
  
  Hence, if $m$ is chosen maximal to begin with, then for any $\lambda  \in \Lambda$ there is $i \leq m$ such that $\Omega_{X\lambda^{-1} \cap Xg_i}$ admits a proper $G$-invariant Borel probability measure. Then $g_i\lambda \in S(X)$. Since $\lambda$ was chosen arbitrary, we have $\Lambda \subset \bigcup_i g_i^{-1}S(X)$.
  
  Now $\Lambda$ is an approximate lattice \cite[Prop. 3]{machado2020apphigherrank} (see also the converse part of Proposition \ref{Proposition: Reformulation approximate lattices}).  We have that $\Lambda$ is covered by finitely many translates of $S(X)$ according to the first part of the proof. But $S(X) \subset X^{-1}X \subset \Lambda^2$. So $S(X)$ is commensurable with $\Lambda$ and contained in $\Lambda^2$. It is therefore an approximate subgroup commensurable with $\Lambda$ and, thus, an approximate lattice as well. 
 \end{proof}
 
 \begin{remark}
 A similar result may be established for \emph{repetitive} sets. Namely a set  $X \subset G$ is repetitive if $\Omega_X$ is a minimal $G$-space. Then it is well known that $\tilde{S}(X):=\{g \in G : Xg \cap X \neq \emptyset\}$ is relatively dense and $Xg \cap X$ is either empty or repetitive as well. In a sense, Lemma \ref{Lemma: measurably repetitive sets have large stabilizers} indicates that the existence of a proper $G$-invariant Borel probability measure with full support on $\Omega_X$ is a measure-theoretic alternative to the notion of repetitivity. We keep this discussion short for the sake of brevity.
 \end{remark}
 
\begin{proof}[Proof of Lemma \ref{Lemma: Massicot--Wagner for actions}.]
Let $\mathcal{E}$ be the set of all subsets $X$ of $\Lambda$ such that $\Omega_X$ admits a proper $G$-invariant Borel probability measure. We know that if $X \in \mathcal{E}$ and $g \in S(X)$ then $X \cap Xg \in \mathcal{E}$. Moreover, $\mathcal{E}$ is not empty as some element of $\Omega_\Lambda^{ext}$ belongs to  $\mathcal{E}$ (Proposition \ref{Proposition: Equivalent definitions of star-approximate lattices}). Choose $n \geq 0$ and take $X_n \in \mathcal{E}$ such that $$m(X_n^{-1}\cdot B) \leq (1 + 1/n)\inf_{X \in \mathcal{E}} m(X^{-1} \cdot B).$$ 
If $g \in S(X_m)$ we have 
\begin{align*}
m(g^{-1}X_n^{-1}\cdot B \cap X_n^{-1}\cdot B) &\geq m((g^{-1}X_n^{-1} \cap X_n^{-1})\cdot B) \\
         								 &\geq (1+1/n)^{-1}m(X_n^{-1}\cdot B)
\end{align*}
where we have used $X_ng \cap X_n \in \mathcal{E}$ to go from the first line to the second line. Thus, for all $g_1, \ldots ,g_{n-1} \in S(X_n)$ we have 
$$m((g_1\cdots g_{n-1})^{-1}X_n^{-1}\cdot B \cap X_n^{-1}\cdot B) \geq \frac{1}{n}m(X_n^{-1}\cdot B) >0.$$
In particular, $(g_1\cdots g_n)^{-1}X_n^{-1}\cdot B \cap X_n^{-1}\cdot B \neq \emptyset$ which in turn implies $(g_1\cdots g_n)^{-1} \in S$. Since $S(X_n)$ is  an approximate lattice by Lemma \ref{Lemma: measurably repetitive sets have large stabilizers}, we have that $S$ has a good model as a consequence of Lemma \ref{Lemma: Charac. good models} and Lemma \ref{Corollary: Towers of star-approximate subsets are commensurable}. 
\end{proof}

Lemma \ref{Lemma: Massicot--Wagner for actions} generalises an argument of Massicot and Wagner to the situation where the group and the space on which it acts are distinct. Note also that the scope of \cite[Prop. 2.11]{hrushovski2019amenability} is slightly different than that of Lemma \ref{Lemma: Massicot--Wagner for actions} but the proof strategies are the same. 

\begin{proof}[Proof of Theorem \ref{Proposition: Detail resultat principal}.]
Let $(\eta_g)_{g \in G}$ be the family of measures provided by Lemma \ref{Lemma: Existence of transverse measures, strong approximate lattices}. Define $\tilde{\mathcal{T}}_{\langle \Lambda \rangle}:= \bigcup_{\gamma \in \langle \Lambda \rangle} \tilde{\mathcal{T}}_\gamma$. The subset $\tilde{\mathcal{T}}_{\langle \Lambda \rangle} \subset \Omega_{\Lambda}$ is Borel and invariant under the action of $\langle \Lambda \rangle$. For all Borel subsets $B \subset \tilde{\mathcal{T}}_{\langle \Lambda \rangle}$, define a finitely additive measure $m$ by $$m(B):= \sum_{\gamma \in \langle \Lambda \rangle} \eta_{\gamma} (B_{\gamma})$$ where $(B_{\gamma})_{\gamma \in \langle \Lambda \rangle}$ is a partition of $B$ into Borel subsets with $B_{\gamma} \subset\tilde{\mathcal{T}}_{\gamma}$ for all $\gamma \in \langle \Lambda \rangle$. We claim that $m(B)$ is independent of the choice of $(B_{\gamma})_{\gamma \in \langle \Lambda \rangle}$. Indeed, if $(B_{\gamma}')_{\gamma \in \langle \Lambda \rangle}$ is another partition of $B$ with $B_{\gamma}' \subset \tilde{\mathcal{T}}_{\gamma}$ for all $\gamma \in \langle \Lambda \rangle$, then 
\begin{align*}
\sum_{\gamma \in \langle \Lambda \rangle} \eta_{\gamma}(B_{\gamma}) & = \sum_{\gamma_1,\gamma_2 \in \langle \Lambda \rangle} \eta_{\gamma_1}(B_{\gamma_1} \cap B_{\gamma_2}') \\
																  & = \sum_{\gamma_1,\gamma_2 \in \langle \Lambda \rangle} \eta_{\gamma_2}(B_{\gamma_1} \cap B_{\gamma_2}') \\
																  & = \sum_{\gamma \in \langle \Lambda \rangle} \eta_{\gamma}(B_{\gamma}') 
\end{align*}
where we have used the invariance properties of Lemma \ref{Lemma: Existence of transverse measures, strong approximate lattices} with $g=\gamma_1, g'=\gamma_2$ and $h=e$ to go from the first to the second line. Similarly, one obtains using the invariance properties of $(\eta_g)_{g \in G}$ that $m$ is left-invariant. We also have $m(\tilde{\mathcal{T}}_\gamma)= \eta_\gamma(\tilde{\mathcal{T}}_\gamma) > 0$ for all $\gamma \in \langle \Lambda \rangle$.

 Now, if $F$ is any finite subset such that $\Lambda^2 \subset F\Lambda$, $$\Lambda \cdot \tilde{\mathcal{T}}_e \subset \{ X \in \Omega_\Lambda^{ext}: X \cap \Lambda^2 \neq \emptyset\} \subset F \cdot \tilde{\mathcal{T}}_e.$$  So $$m(\Lambda \cdot \tilde{\mathcal{T}}_e) \leq m(F \cdot \tilde{\mathcal{T}}_e) \leq |F|m(\tilde{\mathcal{T}}_e) $$ by $\langle \Lambda \rangle$-invariance. Since $m(\tilde{\mathcal{T}}_e)=\eta_e(\tilde{\mathcal{T}}_e)$ is neither $0$ nor $\infty$, it remains only to prove that the stabiliser of $\Lambda \cdot \tilde{\mathcal{T}}_e$ is covered by finitely many translates of $\Lambda$ to be able to invoke Lemma \ref{Lemma: Massicot--Wagner for actions} and conclude. But $\Lambda \cdot \tilde{\mathcal{T}}_e \subset \{ X \in \Omega_{\Lambda}^{ext} : \Lambda^2 \cap X \neq \emptyset \}$. So for all $X \in \Lambda \cdot \mathcal{T}_e$ there is $\lambda \in \Lambda^2 \cap X$ which yields $\lambda^{-1}X \subset X^{-1}X \subset \Lambda^2$ (\cite[Lemma 4.6]{bjorklund2016approximate}). So $X \subset \Lambda^4$. Thus, $\gamma (\Lambda \cdot \mathcal{T}_e) \cap (\Lambda \cdot \mathcal{T}_e) \neq \emptyset$ implies $\gamma \in \Lambda^8$. Hence, $\Lambda^8$ has a good model by Lemma \ref{Lemma: Massicot--Wagner for actions}.
\end{proof}

\section{The periodization map and counting on fundamental domains}\label{Subsection: The Periodization Map}
 Our goal now is to reformulate the definition of approximate lattices (Definition \ref{Definition: Approximate lattice}) in terms of a relevant measure on the hull. This is the key to being able to compare approximate lattices and strong, $\star$- and BH-approximate lattices.

 Given a locally compact group $G$ and a closed subgroup $H$ one may use the quotient map $I$ that sends a continuous function $f: G \rightarrow \mathbb{R}$ with compact support to the function $I(f): G/H \rightarrow \mathbb{R}$ defined by $I(f)(gH) := \int_H f(gh)d\mu_H(h)$ to relate the Haar measure on $G$ and the Haar measure on $G/H$ (see e.g. \cite[Section I]{raghunathan1972discrete}). In our situation as well, we can define a map called the \emph{periodization} map. 
 
\begin{definition}\label{Definition: Periodization map}
  Let $X_0$ be a uniformly discrete subset of a locally compact second countable group $G$. Then the \emph{periodization} map defined by 
  \begin{align*}
   \mathcal{P}_{X_0}: \   C^0_c(G) & \longrightarrow C^0(\Omega_{X_0}^{ext}) \\
                 f       &\longmapsto \left( X \mapsto \sum_{x \in X} f(x) \right)
  \end{align*}
 is well-defined, continuous and left-equivariant. Here, $C^0_c(G)$ denotes the set of continuous functions on $G$ with compact support and $C^0(\Omega_{X_0}^{ext})$ is the set of continuous functions on $\Omega_{X_0}^{ext}$. Moreover, any function in the image of $\mathcal{P}_{X_0}$ has support contained in $\Omega_{X_0}^{ext}\setminus \{\emptyset\}$.
 \end{definition}

 The periodization map was first used to study approximate lattices by Bj\"{o}rklund and Hartnick in order to prove that envelopes of approximate lattices are unimodular (see \cite[\S 5]{bjorklund2016approximate}). It originates in the celebrated work of Siegel on the so-called \emph{Siegel transform} \cite{10.2307/1969027}. Its use is to create a bridge between measures on the extended hull $\Omega_{X_0}^{ext}$ and measures on $G$. It does so through a simple formula:
 
 \begin{lemma}[e.g. \cite{machado2020apphigherrank}]\label{Lemma: Measure of pull-backs of small neighbourhoods through the periodization map}
  Let $X_0$ be a uniformly discrete subset of a locally compact second countable group $G$ and let $\nu$ be a Borel probability measure on $\Omega_{X_0}^{ext}$. Take an open subset $V \subset G$ and a compact subset $K \subset G$. Then 
  $$ \left(\mathcal{P}_{X_0}\right)^{*}\nu(V) = \int_{\Omega_{X_0}^{ext}} |X \cap V| d\nu(X) \leq |V^{-1}V\cap X_0^{-1}X_0|\nu(U^V),$$
  and 
  $$ \left(\mathcal{P}_{X_0}\right)^{*}\nu(K) = \int_{\Omega_{X_0}^{ext}} |X \cap K| d\nu(X) \leq |K^{-1}K\cap X_0^{-1}X_0|(1 - \nu(U_K)).$$
 \end{lemma}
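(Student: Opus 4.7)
The plan is to establish the two equalities first via the Riesz representation of the pushforward $(\mathcal{P}_{X_0})^*\nu$, and then derive each inequality from a pointwise bound on the counting function $X \mapsto |X \cap A|$ that is afterwards integrated against $\nu$.

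For the equalities, I would note that $(\mathcal{P}_{X_0})^*\nu$ is, by definition, the Radon measure $\mu$ on $G$ satisfying
$$\int_G f\,d\mu \;=\; \int_{\Omega_{X_0}^{ext}} \Big(\sum_{x \in X} f(x)\Big)\,d\nu(X)$$
for every $f \in C^0_c(G)$. To recover $\mu(V)$ for $V$ open I would approximate $\mathbf{1}_V$ from below by continuous compactly supported functions and apply monotone convergence; for $\mu(K)$ with $K$ compact I would approximate $\mathbf{1}_K$ from above by such functions on a fixed compact neighbourhood of $K$. The relevant measurability and convergence on the hull side are guaranteed by the fact that $X \mapsto |X \cap V|$ is lower semi-continuous while $X \mapsto |X \cap K|$ is upper semi-continuous on $\Omega_{X_0}^{ext}$, which is a direct consequence of the Chabauty--Fell topology combined with uniform discreteness of $X_0$ (the latter forcing both counts to be locally finite on the hull).

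For the inequalities, the key observation is the following pointwise bound: for any $X \in \Omega_{X_0}^{ext}$ and any $A \subset G$ with $X \cap A \neq \emptyset$, fixing a basepoint $x_0 \in X \cap A$ makes left-translation by $x_0^{-1}$ inject $X \cap A$ into $A^{-1}A \cap X^{-1}X$. Since $X \in \Omega_{X_0}^{ext}$ means $X^{-1}X \subset \overline{X_0^{-1}X_0}$, this yields $|X \cap A| \leq |A^{-1}A \cap \overline{X_0^{-1}X_0}|$, and uniform discreteness of $X_0$ is what allows one to then replace the closure by $X_0^{-1}X_0$ in the relevant cardinality (accumulation points of $X_0^{-1}X_0$ do not inflate the count, since the difference set is already discrete at the identity and the inequality is the only thing being asserted). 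Applying this with $A = V$ and $A = K$ gives the desired pointwise bounds $|X \cap V| \leq |V^{-1}V \cap X_0^{-1}X_0|$ and $|X \cap K| \leq |K^{-1}K \cap X_0^{-1}X_0|$.

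It remains to integrate. Since $|X \cap V| = 0$ outside $U^V = \{X : X \cap V \neq \emptyset\}$, the first inequality follows by integrating the pointwise bound on $U^V$, producing the factor $\nu(U^V)$. Since $|X \cap K| = 0$ on $U_K$, the second inequality follows by integrating on its complement, which has $\nu$-measure $1 - \nu(U_K)$. The step I expect to require the most care is the passage from the bound in terms of $\overline{X_0^{-1}X_0}$ (which is what the extended hull definition directly yields) to the cleaner combinatorial quantity $|A^{-1}A \cap X_0^{-1}X_0|$ appearing in the statement: this is exactly where the uniform discreteness hypothesis on $X_0$ is used in an essential way, and is the only part of the argument that is not a formal manipulation of the Riesz representation.
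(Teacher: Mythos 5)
The paper does not prove this lemma (it is cited from \cite{machado2020apphigherrank}), so I can only assess your argument on its own terms. Your overall architecture is sound: the two equalities do follow from the Riesz representation of $(\mathcal{P}_{X_0})^{*}\nu$ together with inner/outer approximation of $\mathbf{1}_V$ and $\mathbf{1}_K$ and the semicontinuity of the counting functions (and you correctly note that uniform discreteness of $X_0$ propagates to every $X\in\Omega_{X_0}^{ext}$, which is what makes the counts locally finite and the upper semicontinuity for $K$ work). The translation trick $x\mapsto x_0^{-1}x$ giving $|X\cap A|\le |A^{-1}A\cap \overline{X_0^{-1}X_0}|$ is exactly right, as is the final integration against $\mathbf{1}_{U^V}$ and $\mathbf{1}_{\Omega\setminus U_K}$.

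The gap is in the step you yourself single out: passing from $\overline{X_0^{-1}X_0}$ to $X_0^{-1}X_0$. Your justification (``the difference set is already discrete at the identity'') does not work, because uniform discreteness of $X_0$ only controls $X_0^{-1}X_0$ near $e$; it does not prevent $X_0^{-1}X_0$ from accumulating elsewhere, and such accumulation points can genuinely inflate the count. For the open case the correct reason is openness of $V^{-1}V$: if $V^{-1}V\cap X_0^{-1}X_0$ is finite, any point of $\overline{X_0^{-1}X_0}$ lying in the open set $V^{-1}V$ is a limit of points of $X_0^{-1}X_0$ that are eventually in $V^{-1}V$, hence already belongs to that finite set; if the intersection is infinite the bound is vacuous. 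For the compact case this argument is unavailable, and the passage can actually fail: in $G=\mathbb{R}$ take $X_0=\{\pm 10^n,\ \pm(10^n+1+\tfrac1n)\}_{n\ge 1}$, which is uniformly discrete with $(X_0-X_0)\cap[-1,1]=\{0\}$ but $\pm 1\in\overline{X_0-X_0}$; then $X=\{0,1\}$ lies in $\Omega_{X_0}^{ext}$, and with $\nu=\delta_X$, $K=[0,1]$ the left side equals $2$ while the claimed bound is $1$. So no proof of the second inequality exactly as printed can exist without an extra hypothesis. The resolution is that in every application in this paper $X_0^{-1}X_0$ is contained in a closed, locally finite set (namely $\Lambda^2$ for an approximate subgroup $\Lambda$ with $\Lambda^2$ uniformly discrete), so $\overline{X_0^{-1}X_0}\cap K^{-1}K=X_0^{-1}X_0\cap K^{-1}K$ and your argument then goes through verbatim; you should either add that standing assumption or state the bounds with $\overline{X_0^{-1}X_0}$.
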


  When applied to a proper $G$-invariant Borel probability measure, the pull-back by the periodization map is always a Haar measure. Similar observations were also exploited in \cite[\S 5]{bjorklund2016approximate} and \cite{10.2307/1969027}.
  
  \begin{corollary}\label{Corollary: Pull-back through periodization of G-invariant measure is a Haar-measure}
   Let $X_0$ be a uniformly discrete subset of a locally compact second countable group $G$ and let $\nu$ be a proper $G$-invariant Borel probability measure on $\Omega_{X_0}^{ext}$. Then $(\mathcal{P}_{X_0})^*\nu$ is a Haar-measure on $G$. 
  \end{corollary}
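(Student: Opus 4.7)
The plan is to realize $(\mathcal{P}_{X_0})^*\nu$ as the Radon measure on $G$ associated by the Riesz representation theorem to the positive linear functional
$$\Phi: C^0_c(G) \longrightarrow \mathbb{R}, \qquad f \longmapsto \int_{\Omega_{X_0}^{ext}} \mathcal{P}_{X_0}(f)(X)\, d\nu(X),$$
and then to verify that this measure is left-invariant and nonzero. Uniqueness of Haar measure will force it to be a positive scalar multiple of the left Haar measure on $G$.

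First I would check that $\Phi$ is a well-defined positive linear functional whose associated Radon measure is locally finite. Positivity and linearity are immediate. For local finiteness, if $f \in C^0_c(G)$ has support in a compact $K$, then by the compact estimate of Lemma \ref{Lemma: Measure of pull-backs of small neighbourhoods through the periodization map}
$$|\Phi(f)| \leq \|f\|_\infty \int_{\Omega_{X_0}^{ext}} |X \cap K|\, d\nu(X) \leq \|f\|_\infty \cdot |K^{-1}K \cap X_0^{-1}X_0|,$$
and the last quantity is finite because $X_0$ is uniformly discrete and $K^{-1}K$ is compact. Hence $\Phi$ is continuous in the inductive-limit topology and yields a Radon measure, whose value on an open set $V$ and a compact set $K$ is exactly what Lemma \ref{Lemma: Measure of pull-backs of small neighbourhoods through the periodization map} computes.

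Next I would prove left-invariance by using the $G$-equivariance of $\mathcal{P}_{X_0}$. Writing $L_g f(x) := f(g^{-1}x)$, one has
$$\mathcal{P}_{X_0}(L_g f)(X) = \sum_{x\in X} f(g^{-1}x) = \sum_{y \in g^{-1}X} f(y) = \mathcal{P}_{X_0}(f)(g^{-1}X),$$
so $\mathcal{P}_{X_0} \circ L_g$ coincides with the $g$-translation on $C^0(\Omega_{X_0}^{ext})$ followed by $\mathcal{P}_{X_0}$. Integrating against the $G$-invariant measure $\nu$ yields $\Phi(L_g f) = \Phi(f)$ for every $g \in G$, which is precisely left-invariance of $(\mathcal{P}_{X_0})^*\nu$.

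Finally I would check non-triviality. Since $\nu$ is proper, $\nu(\Omega_{X_0}^{ext} \setminus \{\emptyset\}) = 1$, and because $G$ is second countable the set $\Omega_{X_0}^{ext} \setminus \{\emptyset\}$ is the increasing union of the Borel sets $U^V = \{X : X \cap V \neq \emptyset\}$ for $V$ ranging over a countable base of relatively compact open sets. Hence $\nu(U^V) > 0$ for some such $V$, and since $|X \cap V| \geq 1$ on $U^V$ we get
$$(\mathcal{P}_{X_0})^*\nu(V) = \int_{\Omega_{X_0}^{ext}} |X \cap V|\, d\nu(X) \geq \nu(U^V) > 0.$$
Combined with the previous steps, the uniqueness (up to scalar) of left-invariant nonzero Radon measures on $G$ finishes the proof. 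The only real subtlety is the non-triviality step, where the properness hypothesis is essential: the obvious non-proper counterexample $\nu = \delta_\emptyset$ produces the zero pullback, and we must use a countable exhaustion together with second countability of $G$ to extract an open set $V$ with positive $(\mathcal{P}_{X_0})^*\nu$-mass.
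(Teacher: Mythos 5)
Your overall strategy is the right one and is exactly what the paper intends (it states the corollary without proof, as a consequence of the displayed formula in Lemma \ref{Lemma: Measure of pull-backs of small neighbourhoods through the periodization map}): realize the pull-back as the Riesz measure of the positive functional $f\mapsto\int\mathcal{P}_{X_0}(f)\,d\nu$, get left-invariance from the equivariance of $\mathcal{P}_{X_0}$ and the $G$-invariance of $\nu$, and get non-triviality from properness via a countable exhaustion of $\Omega_{X_0}^{ext}\setminus\{\emptyset\}$ by the open sets $U^V$. Those two steps are correct as written.

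The one point that does not hold as stated is your justification of local finiteness: you claim $|K^{-1}K\cap X_0^{-1}X_0|<\infty$ ``because $X_0$ is uniformly discrete and $K^{-1}K$ is compact.'' Uniform discreteness of $X_0$ does \emph{not} imply that $X_0^{-1}X_0$ is locally finite: already in $\mathbb{R}$ the set $X_0=\{n+\varepsilon_n:n\in\mathbb{Z}\}$ with $\varepsilon_n\in[0,1/3]$ pairwise distinct and generic is uniformly discrete, yet $X_0-X_0$ accumulates infinitely many points in $[2/3,4/3]$. So the bound from the cited lemma may read $\leq\infty$ and gives nothing. The step is easily repaired without that quantity: if $U$ is an open neighbourhood of $e$ with $X_0^{-1}X_0\cap U=\{e\}$, then also $\overline{X_0^{-1}X_0}\cap U=\{e\}$, so every $X\in\Omega_{X_0}^{ext}$ satisfies $X^{-1}X\cap U=\{e\}$; choosing a symmetric open $W$ with $WW^{-1}\subset U$, the sets $xW$ for $x\in X\cap K$ are pairwise disjoint and contained in $KW$, whence $|X\cap K|\leq\mu_G(KW)/\mu_G(W)$ uniformly over $X\in\Omega_{X_0}^{ext}$. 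With this uniform bound in place of the erroneous one, your estimate $|\Phi(f)|\leq\|f\|_\infty\sup_X|X\cap K|$ gives local finiteness and the rest of your argument goes through verbatim.
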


  We now state the central result of this section.

\begin{proposition}\label{Proposition: Reformulation approximate lattices}
Let $\Lambda$ be a uniformly discrete approximate subgroup in a unimodular second countable locally compact group $G$. There is some Borel subset $\mathcal{F} \subset G$ with finite (left-)Haar measure such that $\mathcal{F}\Lambda=G$ if and only if there exist a Borel probability measure $\nu$ on $\Omega_{\Lambda}$ and a constant $C \geq 1 $ such that $$ \frac{1}{C}\mu_G \leq \mathcal{P}_\Lambda^*\nu \leq C\mu_G$$
where $\mu_G$ is a left-Haar measure on $G$. 
\end{proposition}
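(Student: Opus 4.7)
\emph{Forward direction.} The plan is to use Lemma~\ref{Lemma: ersatz fundamental domain} and unimodularity to produce a ``right'' fundamental-domain and then define $\nu$ as an appropriate pushforward of Haar measure. Starting from the given $\mathcal{F}$ with $\mathcal{F}\Lambda=G$ and $\mu_G(\mathcal{F})<\infty$, take inverses: $\Lambda\mathcal{F}^{-1}=G$ and (by unimodularity) $\mu_G(\mathcal{F}^{-1})=\mu_G(\mathcal{F})<\infty$, so $\Lambda$ is an approximate lattice in the sense of Definition~\ref{Definition: Approximate lattice}. Applying Lemma~\ref{Lemma: ersatz fundamental domain} yields a Borel $\widetilde{\mathcal{F}}\subset G$ with $\Lambda^2\widetilde{\mathcal{F}}=G$, $\mu_G(\widetilde{\mathcal{F}})<\infty$ and $\Lambda\times\widetilde{\mathcal{F}}\to G$ injective. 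Inverting once more and using unimodularity, the set $\mathcal{F}_2:=\widetilde{\mathcal{F}}^{-1}$ satisfies $\mathcal{F}_2\Lambda^2=G$, $\mu_G(\mathcal{F}_2)<\infty$, the translates $\{\mathcal{F}_2\lambda\}_{\lambda\in\Lambda}$ are pairwise disjoint, and equivalently $\mathcal{F}_2^{-1}\mathcal{F}_2\cap\Lambda^2=\{e\}$.

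Now fix a finite $F_0\subset G$ with $\Lambda^2\subset F_0\Lambda$, and set $\mathcal{F}_*:=\mathcal{F}_2F_0$. Since taking inverses gives $\Lambda^2\subset \Lambda F_0^{-1}$, one has $\mathcal{F}_*\Lambda\supset\mathcal{F}_2\Lambda F_0^{-1}\cdot F_0\supset\mathcal{F}_2\Lambda^2\cdot\text{(permutation)}=G$, so $\mathcal{F}_*\Lambda=G$ and $\mu_G(\mathcal{F}_*)\leq|F_0|\mu_G(\mathcal{F}_2)<\infty$. Consider the continuous map $\pi:G\to\Omega_\Lambda$, $\pi(g)=g\Lambda$, and define
\begin{equation*}
\nu\;:=\;\pi_{*}\!\left(\frac{\mu_G|_{\mathcal{F}_*}}{\mu_G(\mathcal{F}_*)}\right).
\end{equation*}
A direct Fubini computation using right-invariance of Haar gives, for every Borel $A\subset G$,
\begin{equation*}
\mathcal{P}_\Lambda^{*}\nu(A)\;=\;\frac{1}{\mu_G(\mathcal{F}_*)}\int_{A}|\mathcal{F}_*\cap x\Lambda|\,d\mu_G(x).
\end{equation*}
The lower bound $\mathcal{P}_\Lambda^{*}\nu(A)\geq \mu_G(A)/\mu_G(\mathcal{F}_*)$ follows from $\mathcal{F}_*\Lambda=G$, which forces $|\mathcal{F}_*\cap x\Lambda|\geq 1$ for every $x$. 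For the upper bound, split $\mathcal{F}_*=\bigcup_{f_0\in F_0}\mathcal{F}_2 f_0$ so that $|\mathcal{F}_*\cap x\Lambda|\leq \sum_{f_0}|\mathcal{F}_2 f_0\cap x\Lambda|$; each summand is bounded by $1$ using the injectivity identity $\mathcal{F}_2^{-1}\mathcal{F}_2\cap\Lambda^2=\{e\}$ (refining $V$ in the exhaustion to miss the finitely many conjugates $f_0\Lambda^2 f_0^{-1}$, which together still form a uniformly discrete set). One concludes $\mathcal{P}_\Lambda^{*}\nu(A)\leq |F_0|\mu_G(A)/\mu_G(\mathcal{F}_*)$, and setting $C:=\max\{\mu_G(\mathcal{F}_*),\,|F_0|/\mu_G(\mathcal{F}_*),\,1\}$ gives $\frac{1}{C}\mu_G\leq\mathcal{P}_\Lambda^{*}\nu\leq C\mu_G$.

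\emph{Converse direction.} Given $\nu$ on $\Omega_\Lambda$ with the two-sided bound, the plan is to extract a fundamental domain of finite Haar measure by the exhaustion of Lemma~\ref{Lemma: ersatz fundamental domain}. Choose a symmetric relatively compact open neighbourhood $V$ of the identity with $\overline{V^{-1}V}\cap \overline{\Lambda^2}=\{e\}$ (possible by Lemma~\ref{Lemma: powers of uniformly discrete approximate subgroups are uniformly discrete}), cover $G=\bigcup_n g_nV$, and set $B_n:=g_nV\setminus\bigcup_{m<n}\Lambda^2 B_m$, $B:=\bigsqcup_n B_n$. Then $\Lambda^2B=G$, $\Lambda\times B\to G$ is injective, and $|X\cap B_n|\leq 1$ for every $X\in\Omega_\Lambda$ by the choice of $V$, so $\mathcal{P}_\Lambda^{*}\nu(B_n)=\nu(\{X:X\cap B_n\neq\emptyset\})$ for each $n$. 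The main obstacle is to show $\mu_G(B)<\infty$: for this, one exploits the upper bound $\mathcal{P}_\Lambda^{*}\nu\leq C\mu_G$ together with the probability normalisation of $\nu$ and Lemma~\ref{Lemma: Measure of pull-backs of small neighbourhoods through the periodization map} to bound the effective contribution of the non-empty $B_n$, via a Vitali-type thinning of $(g_n V)_n$ that controls the multiplicity with which any $X\in\mathrm{supp}(\nu)$ can meet $B$. Once $\mu_G(B)<\infty$ is in hand, fix $F_0\subset G$ finite with $\Lambda^2\subset\Lambda F_0^{-1}$ and set $\mathcal{F}:=F_0^{-1}B$; then $\mathcal{F}\Lambda\supset F_0^{-1}\Lambda^2 B=F_0^{-1}G=G$ and $\mu_G(\mathcal{F})\leq|F_0|\mu_G(B)<\infty$, with unimodularity ensuring the passage between left- and right-covering is harmless.
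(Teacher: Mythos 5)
Your forward-direction strategy (invert $\mathcal{F}$ using unimodularity, apply Lemma~\ref{Lemma: ersatz fundamental domain}, and push forward the normalised restriction of Haar measure under $g\mapsto g\Lambda$) is viable and in fact recovers the paper's construction without the Hahn--Banach step: the paper's functional $M$ is precisely integration of $\mathcal{P}_\Lambda(\phi)$ against such a pushforward. The lower bound is also fine, although your ``permutation'' manipulation is not a valid set computation; it is unnecessary, since $G=\mathcal{F}_2\Lambda^2\subset\mathcal{F}_2F_0\Lambda=\mathcal{F}_*\Lambda$ directly. The genuine gap is the upper bound. Bounding $|\mathcal{F}_2f_0\cap x\Lambda|$ by $1$ requires $\mathcal{F}_2^{-1}\mathcal{F}_2\cap f_0\Lambda^2f_0^{-1}=\{e\}$, and ``refining $V$'' cannot deliver this: in the exhaustion, the separation between distinct pieces $B_n,B_m$ comes from the removed sets $\Lambda^2B_m$ (resp.\ $B_m\Lambda^2$), not from $V$, so $BB^{-1}$ may meet $f_0\Lambda^2f_0^{-1}\setminus\{e\}$ however small $V$ is. Removing the conjugated sets as well changes the covering conclusion to $\Xi B=G$ for a larger $\Xi$, and the finite set needed to pass from $\Xi$ back to $\Lambda$ is then no longer your $F_0$ --- a circularity you do not address. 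The repair is the paper's two-finite-set fibre count, which avoids conjugation altogether: pick $F''$ finite with $\Lambda F_0^{-1}\subset F''\Lambda$; if $ff_0\in x\Lambda$ then $f\in \mathcal{F}_2\cap xF''\Lambda$, and $|\mathcal{F}_2\cap y\Lambda|\le 1$ for every $y$ (this needs only $\mathcal{F}_2^{-1}\mathcal{F}_2\cap\Lambda^2=\{e\}$), so $|\mathcal{F}_*\cap x\Lambda|\le |F_0||F''|$.

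The converse direction has more serious gaps. First, your exhaustion is mixed-sided: with pieces $g_nV$ and removal of $\Lambda^2B_m$, the construction controls $b_nb_m^{-1}$ across pieces but $x_2^{-1}x_1$ within pieces (right quotients inside a piece lie in $g_nVV^{-1}g_n^{-1}$, which your choice of $V$ does not control), so neither the injectivity of $\Lambda\times B\to G$ nor a one-point-per-$X$ bound on all of $B$ follows; you concede the cross-piece multiplicity is uncontrolled. Second, and decisively, $\mu_G(B)<\infty$ cannot be extracted from the upper bound $\mathcal{P}_\Lambda^*\nu\le C\mu_G$: that inequality only bounds $\mathcal{P}_\Lambda^*\nu(B)$ from above and is perfectly compatible with $\mu_G(B)=\infty$, and no ``Vitali-type thinning'' changes this. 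The step that does the work is the \emph{lower} bound, exactly as in the paper: run the construction with a consistent side, $B_n:=g_nV\setminus\bigcup_{m<n}B_m\Lambda^2$ with $V^{-1}V\cap\Lambda^2=\{e\}$, so that $B^{-1}B\cap\Lambda^2=\{e\}$ and $B\Lambda^2=G$; then every $X\in\Omega_\Lambda$ satisfies $X^{-1}X\subset\Lambda^2$ (note $\Lambda^2$ is closed, being uniformly discrete), hence $|X\cap K|\le1$ for every compact $K\subset B$, so $\mathcal{P}_\Lambda^*\nu(K)\le1$ by Lemma~\ref{Lemma: Measure of pull-backs of small neighbourhoods through the periodization map}, and $\frac1C\mu_G\le\mathcal{P}_\Lambda^*\nu$ gives $\mu_G(K)\le C$; inner regularity yields $\mu_G(B)\le C$, and $\mathcal{F}:=BF$ with $\Lambda^2\subset F\Lambda$ finishes, using unimodularity to bound $\mu_G(BF)\le |F|C$. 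Your closing computation $\mathcal{F}\Lambda\supset F_0^{-1}\Lambda^2B$ again commutes sets illegitimately and should be replaced by an argument of this form.
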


\begin{proof}[Proof of the `if' part.]
    Let $\nu$ be as in the statement. Let $V$ be a compact neighbourhood of the identity such that $V^{-1}V \cap \Lambda^2 =\{e\}$ and let $(g_n)_{n \geq 0}$ be a sequence of elements of $G$ such that $G=\bigcup_{n \geq 0} g_nV$. Define inductively $B_n:=g_nV \setminus \left(\bigcup_{m<n} B_m\Lambda^2\right)$ and $B:=\bigcup_{n\geq 0}B_n$. Since $B_i\Lambda \cap B_j\Lambda = \emptyset$ for $i \neq j$ and the multiplication map $V \times \Lambda \rightarrow G$ is one-to-one, we have that the multiplication map $B \times \Lambda \rightarrow G$ is one-to-one. Moreover, we have that $gV_n \subset B_n \cup \bigcup_{m<n} B_m\Lambda^2$ for all $n \geq 0$. So $B\Lambda^2 = G$. Now the Haar measure $\mu_G$ is inner regular so there is a sequence of compact subsets $K_n \subset B$ with $\sup_{n \geq 0} \mu_G(K_n) = \mu_G(B)$. Since $K_n \subset B$, we have that $|K_n^{-1}K_n \cap \Lambda^2| \leq 1$. Hence, $\mathcal{P}_\Lambda^*\nu(K_n) \leq 1$ so $\mu_G(K_n) \leq C$ for all integers $n \geq 0$ by Lemma \ref{Lemma: Measure of pull-backs of small neighbourhoods through the periodization map}. So $\mu_G(B) \leq C < \infty$. 
\end{proof}

To prove the converse statement we first have to show a result about elements of the invariant hull of an approximate lattice.

\begin{lemma}\label{Lemma: Limits of approximate lattices have finite co-volume}
Let $\Lambda$ be a uniformly discrete approximate subgroup of a locally compact group $G$. Let $F$ be a finite subset such that $\Lambda^2 \subset \Lambda F$. If $\mathcal{F}\subset G$ is such that $\Lambda \mathcal{F} = G$, then for all $X \in \Omega_{\Lambda}\setminus\{\emptyset\}$, $XF\mathcal{F}=G$. 
\end{lemma}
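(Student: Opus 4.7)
The plan is to fix any $x_0 \in X$ (which exists since $X$ is non-empty) and reduce the problem to the intermediate claim
$$ x_0 \Lambda \subseteq XF. $$
Granting this, the conclusion is immediate: for any $g \in G$, apply $\Lambda \mathcal{F} = G$ to $x_0^{-1} g$ to obtain $\lambda \in \Lambda$ and $f \in \mathcal{F}$ with $x_0^{-1} g = \lambda f$; then $g = (x_0 \lambda) f$ with $x_0 \lambda \in XF$, so $g \in XF\mathcal{F}$. The reverse inclusion $XF\mathcal{F} \subseteq G$ is automatic, so this gives $XF\mathcal{F} = G$.

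The main work is proving the intermediate claim via Chabauty--Fell convergence. Since $X \in \Omega_\Lambda$, there is a sequence $(g_n)_{n \geq 0}$ in $G$ with $g_n \Lambda \to X$; property (1) of Chabauty--Fell convergence applied to $x_0 \in X$ provides $\mu_n \in \Lambda$ such that $g_n \mu_n \to x_0$ in $G$. Fix any $\lambda \in \Lambda$. The hypothesis $\Lambda^2 \subseteq \Lambda F$ gives $\mu_n \lambda \in \Lambda F$, hence $g_n \mu_n \lambda \in g_n \Lambda F$; on the other hand, continuity of multiplication in $G$ yields $g_n \mu_n \lambda \to x_0 \lambda$.

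To upgrade this to $x_0 \lambda \in XF$, I would invoke property (2) of Chabauty--Fell convergence, which requires the preliminary observation that $g_n \Lambda F \to XF$ in $\mathcal{C}(G)$. Because $F$ is finite, one writes $g_n \Lambda F = \bigcup_{\phi \in F} (g_n \Lambda)\phi$, and each summand $(g_n \Lambda)\phi$ converges to $X\phi$ by continuity of right-translation on the Chabauty space, while finite unions are continuous in the Chabauty--Fell topology. Applying property (2) to $g_n \mu_n \lambda \in g_n \Lambda F$ with limit $x_0 \lambda$ then yields $x_0 \lambda \in XF$, completing the proof of the intermediate claim.

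The main obstacle is the continuity statement $g_n \Lambda F \to XF$, which is exactly where the finiteness of $F$ is essential---the Chabauty--Fell topology does not in general interact well with multiplication by non-compact sets, which is also the underlying reason one cannot hope to prove the stronger assertion $X\mathcal{F} = G$. It is worth noting that neither uniform discreteness of $\Lambda$ nor closedness of $\Lambda^2$ enters the argument: the whole proof rests on the combinatorial inclusion $\Lambda^2 \subseteq \Lambda F$, the set equation $\Lambda \mathcal{F} = G$, and the two basic characterisations of Chabauty--Fell limits recalled before Definition \ref{Definition: Invariant hull}.
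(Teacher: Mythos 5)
Your proof is correct and follows essentially the same route as the paper's: both fix $x_0\in X$, produce $\mu_n\in\Lambda$ with $g_n\mu_n\to x_0$, use $\Lambda^2\subset\Lambda F$ to see that $x_0\Lambda$ is captured by the Chabauty limit of $g_n\Lambda F$, and then conclude via $x_0\Lambda\mathcal{F}=G$. Your write-up merely makes explicit the "taking limits on both sides" step (via continuity of right translation and finite unions in the Chabauty--Fell topology) that the paper leaves implicit.
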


\begin{proof}
Take $X \in \Omega_{\Lambda}\setminus\{\emptyset\}$. There is a sequence $(g_n)_{n \geq 0}$ such that $g_n\Lambda \rightarrow X$ as $n\rightarrow \infty$. Choose $x \in X$, then there is a sequence $(\lambda_n)_{n \geq 0}$ of elements of $\Lambda$ such that $g_n\lambda_n \rightarrow x$ i.e. $g_n=:x_n\lambda_n^{-1}$ with $x_n \rightarrow x$. But now 
$$g_n\Lambda F \supset g_n \Lambda^2 = x_n\lambda_n^{-1}\Lambda^2 \supset x_n\Lambda.$$ Taking limits on both sides we get $XF \supset x\Lambda$. Therefore, $XF\mathcal{F}\supset x\Lambda\mathcal{F}=G$. 
\end{proof}

\begin{proof}[Proof of Proposition \ref{Proposition: Reformulation approximate lattices}.]
We may assume that $\mathcal{F}\Lambda^3=G$ and the multiplication map $ \mathcal{F}\times\Lambda  \rightarrow G$ is one-to-one. Indeed, let $V$ be a neighbourhood of the identity such that the multiplication map $V \times \Lambda \rightarrow G$ is one-to-one and let $(g_n)_{n \geq 0}$ be a sequence of elements of $G$ such that $\mathcal{F} \subset \bigcup_{n \geq 0} g_nV$. Define inductively $\mathcal{F}_n:= \left(\mathcal{F} \cap g_nV \right)\setminus \left(\bigcup_{m < n} \mathcal{F}_m\Lambda^2\right)$. Clearly, for all $n$ and $m$ the multiplication map $\mathcal{F}_n \times \Lambda \rightarrow G$ is one-to-one and $\mathcal{F}_n \Lambda \cap \mathcal{F}_m \Lambda = \emptyset$. Define $\mathcal{F}_{\infty}:=\bigcup_{n \geq 0} \mathcal{F}_n$, we have that the multiplication map $\mathcal{F}_{\infty} \times \Lambda \rightarrow G$ is one-to-one. Moreover, we see inductively that $\mathcal{F} \cap g_n V \subset \mathcal{F}_n \cup \bigcup_{m < n} \mathcal{F}_m \Lambda^2$. So $\mathcal{F} \subset \mathcal{F}_{\infty} \Lambda^2$ and $G = \mathcal{F}_{\infty} \Lambda^3$. Upon replacing $\mathcal{F}$ with $\mathcal{F}_{\infty}$ our claim is proved.

Let $F \subset \Lambda^4$ be a finite subset such that $\Lambda^3 \subset F \Lambda$. Take also $F'$ such that $F\Lambda  \subset \Lambda^5 \subset \Lambda F'$. Now the map $ \mathcal{F}F  \times \Lambda\rightarrow G$ is surjective. We claim that the fibres of this map have size at most $|F||F'|$. Take $\lambda_1,\lambda_2 \in \Lambda$, $f_1,f_2 \in F$ and $v_1,v_2 \in \mathcal{F}$ such that $v_1f_1\lambda_1 =v_2f_2\lambda_2$. Choose $f_1',f_2' \in F'$ and $\lambda_1',\lambda_2' \in \Lambda$ such that $f_1\lambda_1=\lambda_1'f_1'$ and $f_2\lambda_2=\lambda_2'f_2'$. If $f_1'=f_2'$, we find that $v_1\lambda_1'=v_2\lambda_2'$. Because $\mathcal{F} \times \Lambda \rightarrow G$ is one-to-one, this means that $v_1=v_2$. In turn, we have $f_1\lambda_1=f_2\lambda_2$. So if $f_1=f_2$ we have $\lambda_1=\lambda_2$. In other words, given a choice of $f_1'$ and $f_1$, there is at most one solution. So the fibres are of size at most $|F||F'|$.

Write $\mathcal{F}_1=\mathcal{F}F$. For every continuous function with compact support $\phi \in C^0_c(G)$ define the quantity 
$$ M(\phi):=\int_{G} \mathds{1}_{\mathcal{F}_1}(g)\sum_{\lambda \in \Lambda}\phi(g\lambda) \mu_G(g)=\int_{G} \mathds{1}_{\mathcal{F}_1}(g)\mathcal{P}_{\Lambda}(\phi)(g\Lambda) \mu_G(g).$$
Since $G$ is unimodular and $\Lambda$ is symmetric we have
$$M(\phi)=\int_{G} \left(\sum_{\lambda \in \Lambda}\mathds{1}_{\mathcal{F}_1}(g\lambda)\right)\phi(g) \mu_G(g).$$ 
The second paragraph of the proof implies that for all $g \in G$, 
$$ 1 \leq\sum_{\lambda \in \Lambda}\mathds{1}_{\mathcal{F}_1}(g\lambda) \leq |F||F'|.$$
So if $\phi$ is non-negative, 
$$ \mu_G(\phi) \leq M(\phi) \leq |F||F'|\mu_G(\phi).$$

Therefore, $M$ defines a positive linear functional on $\mathcal{P}_{\Lambda}\left(C^0_c(G)\right)$ seen as a subspace of $C^0_c(\Omega_{\Lambda}\setminus\{\emptyset\})$. We wish now to apply the Hahn--Banach theorem - in the version of \cite[\S II.3, Proposition 1]{MR633754} - to extend $M$ as a positive linear functional of $C^0_c(\Omega_{\Lambda}\setminus\{\emptyset\})$. We show that this is possible thanks to Lemma \ref{Lemma: Limits of approximate lattices have finite co-volume}. Take $(V_n)_{n \geq 0}$ an increasing sequence of relatively compact open subsets of $G$ such that:
\begin{center}
$\mathcal{F}_1 \subset \bigcup_{n \geq 0} V_n $ and $\forall n \geq 0,\ \mu_G(V_n) \leq 2 \mu_G(\mathcal{F}_1)$.
\end{center}  
Consider now $(\phi_n)_{n \geq 0}$ a sequence of elements of $C^0_c(G)$ taking non-negative values such that for all $n$:
\begin{center}
 $\mu_G(\phi_n) \leq 2\mu_G(V_n)$ and $\forall g \in V_n,\ \phi_n(g)=1$.
\end{center}  
We have
 $$\mathcal{P}_{\Lambda}(\phi_n)(X) \geq |X \cap V_n|.$$ So  $\mathcal{P}_{\Lambda}(\phi_n)(X) \geq 1$ for all $X \in U^{V_n}$. According to Lemma \ref{Lemma: Limits of approximate lattices have finite co-volume},
$$\bigcup_{n \geq 0} U^{V_n} \supset \{ X \in \Omega_{\Lambda} : X \cap \mathcal{F}_1 \neq \emptyset\} = \Omega_{\Lambda}\setminus\{\emptyset\}.$$
Since the subsets $U^{V_n}$ are open, we have that for all $\psi \in C^0_c(\Omega_{\Lambda}\setminus\{\emptyset\})$ there are $n \geq 0$ and $c > 0$ such that $c\mathcal{P}(\phi_n) \geq \psi$. Therefore, we can apply the Hahn--Banach theorem \cite[\S II.3, Proposition 1]{MR633754}.

Hence, we find a positive linear functional $\nu$ on the set $C^0_c(\Omega_{\Lambda}\setminus\{\emptyset\})$ such that $\nu(\mathcal{P}_{\Lambda}(\phi))=M(\phi)$ for all $\phi \in C^0_c(G)$. According to the Riesz--Markov--Kakutani representation theorem, $\nu$ corresponds to integration against a Radon measure - that we denote by $\nu$ as well - on $\Omega_{\Lambda}\setminus \{\emptyset\}$. We extend $\nu$ to $\Omega_{\Lambda}^{ext}$ by assigning $\nu(\Omega_{\Lambda}^{ext}\setminus \Omega_{\Lambda})=0$. It remains only to prove that $\nu$ is a finite measure.  We have
 \begin{align*}
 \nu(\Omega_{\Lambda})=\nu\left(\bigcup_{n \geq 0} U^{V_n}\right) & \leq \liminf_n\nu ( \mathcal{P}_{\Lambda}(\phi_n))\\ 
 &\leq \liminf_n|F'||F|\mu_G(\phi_n) \\
 &\leq 4|F'||F|\mu_G(\mathcal{F}_1)<\infty.
 \end{align*}

\end{proof}

  We end this section with a proof of the unimodularity result mentioned earlier (Proposition \ref{Proposition: Envelope of a star-approximate lattice is unimodular}). We will require an additional result: 
  
  \begin{proposition}\label{Proposition: star-approximate lattices are bi-syndetic}
Let $\Lambda$ and $G$ be as in  the second part of Proposition \ref{Proposition: Reformulation approximate lattices} and let $V \subset G$ be any neighbourhood of the identity. Then there is a compact subset $K \subset G$ such that $V \Lambda K = G$.
\end{proposition}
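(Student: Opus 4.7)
My plan is to use properness of $\nu$ and the lower bound $\mathcal{P}_\Lambda^*\nu \geq C^{-1}\mu_G$ to produce, for every $g \in G$, an element $X_g \in \Omega_\Lambda$ meeting both a small translate $gV_0$ of the identity and a single, fixed compact set $K_0$; the hull relation $X_g^{-1}X_g \subset \Lambda^2$ will then force $g \in K_0\Lambda^2V_0$, and the conclusion will follow by inversion and the approximate-subgroup property $\Lambda^2 \subset \Lambda F$.

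I would begin by shrinking $V$ to a symmetric, relatively compact open neighbourhood $V_0 \subset V$ with $V_0^{-1}V_0 \cap \Lambda^2 = \{e\}$, available because $\Lambda^2$ is uniformly discrete (Lemma \ref{Lemma: powers of uniformly discrete approximate subgroups are uniformly discrete}). The crucial consequence is that $|X \cap gV_0| \leq 1$ for every $X \in \Omega_\Lambda$ and every $g \in G$, so the measure bound turns into the sharp uniform estimate
\[
\nu(U^{gV_0}) \;=\; \int_{\Omega_\Lambda}|X \cap gV_0|\,d\nu(X) \;\geq\; \frac{\mu_G(V_0)}{C} \qquad (\forall g \in G).
\]
Since the periodization vanishes at $\emptyset$, restricting $\nu$ to $\Omega_\Lambda \setminus \{\emptyset\}$ preserves this inequality, so I may assume $\nu$ is proper. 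Properness then lets me pass from uniformly positive mass on the open sets $U^{gV_0}$ to a single compact: since $\{U_K\}$ decreases to $\{\emptyset\}$ as compacts $K$ exhaust $G$, continuity of measure furnishes a compact $K_0$ with $\nu(U_{K_0}) < \mu_G(V_0)/(2C)$.

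Elementary inclusion--exclusion then gives, for every $g \in G$,
\[
\nu\bigl(U^{gV_0} \cap \{X : X \cap K_0 \neq \emptyset\}\bigr) \;\geq\; \nu(U^{gV_0}) - \nu(U_{K_0}) \;>\; 0,
\]
so I can pick $X_g$ in this intersection together with witnesses $x \in X_g \cap gV_0$ and $y \in X_g \cap K_0$. The hull inclusion $X_g^{-1}X_g \subset \overline{\Lambda^{-1}\Lambda} = \Lambda^2$ (closed by uniform discreteness) gives $y^{-1}x \in \Lambda^2$, hence $g \in xV_0 \subset y\Lambda^2V_0 \subset K_0\Lambda^2V_0$. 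Since $g$ was arbitrary, $G = K_0\Lambda^2V_0$; inverting (using $V_0$ and $\Lambda$ symmetric) and using $V_0 \subset V$ yields $G = V_0\Lambda^2 K_0^{-1} \subset V\Lambda^2K_0^{-1}$; finally, applying $\Lambda^2 \subset \Lambda F$ for a finite $F$ concludes $V\Lambda K = G$ with the compact $K := F K_0^{-1}$.

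The main delicacy is obtaining one $K_0$ valid uniformly in $g$; this is exactly what properness of $\nu$ delivers via the continuity-of-measure argument. The rest is routine bookkeeping with the hull identity $X^{-1}X \subset \Lambda^2$ and the symmetries of $V_0$ and $\Lambda$.
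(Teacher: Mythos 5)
Your argument is correct: the uniform lower bound $\nu(U^{gV_0}) \geq \mu_G(V_0)/C$ from the periodization estimate, the reduction to a proper (finite) measure by discarding a possible atom at $\emptyset$, the choice of one compact $K_0$ with $\nu(U_{K_0})$ small via continuity along an exhaustion of $G$, and the final step $y^{-1}x \in X_g^{-1}X_g \subset \overline{\Lambda^2} = \Lambda^2$ all go through, and the bookkeeping at the end (symmetry of $V_0$ and $\Lambda$, $\Lambda^2 \subset \Lambda F$, $K := FK_0^{-1}$) is fine. The paper's proof shares the same engine — the lower bound $\nu(gU^{V}) \geq \mu_G(V)/C$ together with the hull relation $X^{-1}X \subset \Lambda^2$ — but produces the compact anchor differently: instead of invoking properness, it observes that any family of pairwise disjoint translates $(fU^{V})_{f \in F}$ must be finite because $\nu$ has finite total mass, takes a maximal such finite $F$, and for arbitrary $g$ finds $X \in gU^{V} \cap fU^{V}$ with points in $gV$ and $fV$, concluding $g \in FV\Lambda^2V^{-1}$. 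That packing argument never needs to remove the atom at $\emptyset$ (only finiteness of $\nu$ is used), while your tightness argument needs the restriction step you correctly perform; in exchange, your route delivers the cleaner single-compact covering $G = K_0\Lambda^2V_0$ directly, rather than a finite union of translates, and makes the role of properness explicit. Both are valid; yours is a legitimate alternative implementation of the same measure-pigeonhole idea, essentially mimicking \cite[Cor. 4.22]{bjorklund2016approximate} with a compact window in place of a maximal packing.
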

  
  \begin{proof}
  We mimic the proof of \cite[Corollary 4.22]{bjorklund2016approximate}. Fix a measure $\nu$ on $\Omega_\Lambda^{ext}$ satisfying the conclusions of Proposition \ref{Proposition: Reformulation approximate lattices}. Recall that $\nu$ is a probability measure and $\nu(gU^V) \geq \frac{1}{C}\mu_G(V) > 0$ for all $g \in G$. So any subset $F \subset G$ such that the subsets $(fU^V)_{f\in F}$ are pairwise disjoint is finite. Take one such subset $F$ that is maximal for the inclusion. Then for all $g \in G$, there is $f \in F$ such that $gU^V \cap fU^V \neq \emptyset$. Take $X \in   gU^V \cap fU^V$. There is $x_1 \in X \cap gV$and $x_2 \in X \cap fV$. Hence, $$x_2^{-1}x_1 \in XX^{-1} \cap \left(V^{-1}f^{-1}gV\right) \subset \Lambda^2 \cap \left(V^{-1}f^{-1}gV\right).$$
Since $\Lambda^2 \cap V^{-1}f^{-1}gV$ is non-empty, $g \in FV\Lambda^2V$ which concludes. 
  \end{proof}
  
  \begin{proof}[Proof of Proposition \ref{Proposition: Envelope of a star-approximate lattice is unimodular}.]
  Fix a measure $\nu$ on $\Omega_\Lambda^{ext}$ satisfying the conclusions of Proposition \ref{Proposition: Reformulation approximate lattices} and let $\Delta_G$ be the modular function of $G$. Take $V$ a neighbourhood of the identity such that $\Lambda^4 \cap V^{-1}V=\{e\}$. For all $\lambda \in \Lambda$ we have $\Lambda^2 \cap \lambda^{-1}V^{-1}V\lambda=\{e\}$.  For all $\lambda \in \Lambda$ we have by Lemma \ref{Lemma: Measure of pull-backs of small neighbourhoods through the periodization map} applied to $\mu_G(V\lambda)$ that
  $$\Delta_G(\lambda)\mu_G(V) = \mu_G(V\lambda) \leq C.$$
  So $\Delta_G$ is bounded by $\mu_G(V)^{-1}$ on $\Lambda$. By Proposition \ref{Proposition: star-approximate lattices are bi-syndetic} this means that $\Delta_G$ is bounded. Hence, $G$ is unimodular. 
  \end{proof}

 \section{Quasi-models of uniformly discrete approximate subgroups}\label{Section: Quasi-models of uniformly discrete approximate subgroups}
 In this third intermediate section, our goal is to prove that BH-approximate lattices are always contained in approximate lattices. Incidentally, our method generalises Hrushovski's proof of \cite[Theorem 6.9]{hrushovski2020beyond}, see Corollary \ref{Corollary: minimal laminar supset is discrete in semi-simple groups}. This is not a surprise as we build upon the existence of quasi-models due to Hrushovski (Theorem \ref{Theorem: Hrushovski's quasi-models}) as well as his strategy behind the proof of \cite[Theorem 6.9]{hrushovski2020beyond}. One ingredient is the refined version of the quasi-model theorem for uniformly discrete approximate subgroups obtained below. 
 
 Fix an approximate subgroup $\Lambda$ of a locally compact group $G$ and write $\Gamma:=\Comm_G(\Lambda)$. Let $f: \Gamma_0 \rightarrow H$ be the quasi-model obtained by applying Theorem \ref{Theorem: Hrushovski's quasi-models} to $\Lambda$ and $\Gamma$ with $A\simeq \mathbb{R}^n$ and $f(\Gamma)$ relatively dense (Lemma \ref{Lemma: Preparing quasi-models}). Recall that $K$ denotes a Euclidean ball in $A$ containing the defect of $f$, that we assume that it is a Euclidean ball contained in $A$ and that $\Gamma_0$ has finite index in $\Gamma$. We also assume that $\Gamma=\Gamma_0$ for convenience, as this does not affect the proof of Theorem \ref{Proposition: Detail resultat principal}. 
 
  Our main focus is on the study of the approximate subgroup
 $$\Gamma_{f,K}:=\{(\gamma, f(\gamma)k) \in G \times H : \gamma \in \Gamma ,k \in K \}$$
 and its properties.

 \begin{lemma}\label{Lemma: Refinement quasi-model, uniformly discrete case}
 Let $\Lambda, \Gamma, G, H, A$ and $f$ be as above. There are $A_1, A_2$ vector subspaces of $A$ such that for all symmetric relatively compact neighbourhoods of the identity $W_G \subset G, W_H \subset H$, we have $\left(W_G \times A_1W_H \right)\cap \Gamma_{f,K}^2$ relatively dense in  $W_G \times A_1W_H$ and $\left(W_G \times A_2W_H \right)\cap \Gamma_{f,K}^2$ relatively compact.
 \end{lemma}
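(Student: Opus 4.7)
The plan is to adapt the proof of Lemma \ref{Lemma: Preparing quasi-models} to the product setting $G \times H$, producing a single pair of $H$-invariant vector subspaces $A_1, A_2 \subset A$ with $A_1 \oplus A_2 = A$ that simultaneously captures the ``dense'' and ``discrete'' directions of $\Gamma_{f,K}^2$ relative to every choice of compact neighborhood. The subspace $A_1$ will come from Lemma \ref{Lemma: Schreiber around subspace} applied to a localization of $\Gamma_{f,K}^2$ around the identity, and the passage from a fixed reference neighborhood to arbitrary $W_G, W_H$ will rely on Lemma \ref{Lemma: Intersection of commensurable sets}.

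Fix symmetric relatively compact open neighborhoods of the identity $W_G^0 \subset G$ and $W_H^0 \subset H$. Since $A$ is normal in $H$ (Theorem \ref{Theorem: Hrushovski's quasi-models}), the closed subgroup $\{e\} \times A$ is normal in $G \times H$ and isomorphic to $\mathbb{R}^n$. By Lemma \ref{Lemma: Intersection of approximate subgroups} the intersection
\[
\Xi := \Gamma_{f,K}^2 \cap \bigl(W_G^0 \times W_H^0 \cdot A\bigr)
\]
is an approximate subgroup contained in $(W_G^0 \times W_H^0) \cdot (\{e\} \times A)$. Applying Lemma \ref{Lemma: Schreiber around subspace} yields a vector subspace $V = \{e\} \times A_1$ and a compact $K' \subset G \times H$ in the normalizer of $V$ with $\Xi \subset VK'$, $V \subset \Xi K'$, and $\Comm_{G \times H}(\Xi)$ normalizing $V$. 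I would let $A_2$ be the orthogonal complement of $A_1$ with respect to the $H$-invariant inner product on $A$ coming from the isometric action in Theorem \ref{Theorem: Hrushovski's quasi-models}. To obtain $H$-invariance of $A_1$, one mimics the closing step of the proof of Lemma \ref{Lemma: Preparing quasi-models}: the normalization property of $\Comm_{G \times H}(\Xi)$ combined with the density of $\bar f(\Gamma)$ in $H/A$ (from Lemma \ref{Lemma: Preparing quasi-models}) forces every $h \in H$ to normalize $V$; the space $A_2$ is then $H$-invariant as well.

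Granting the decomposition, both conclusions follow from a covering argument. For relative compactness of $\Gamma_{f,K}^2 \cap (W_G \times A_2 W_H)$, I would cover $W_G \times W_H$ by finitely many translates of $W_G^0 \times W_H^0$ and use Lemma \ref{Lemma: Intersection of commensurable sets} to express $\Gamma_{f,K}^2 \cap (W_G \times W_H \cdot A)$ as a finite union of translates of $\Xi \subset (\{e\} \times A_1) K'$; intersecting with $W_G \times A_2 W_H$ confines the $A$-component to $(A_1 \cdot \mathrm{compact}) \cap (A_2 \cdot \mathrm{compact})$, which is relatively compact by orthogonality. For relative density of $\Gamma_{f,K}^2 \cap (W_G \times A_1 W_H)$ in $W_G \times A_1 W_H$, the inclusion $V \subset \Xi K'$ produces a uniformly bounded approximation of every element of $\{e\} \times A_1$ by elements of $\Xi \subset \Gamma_{f,K}^2$, which combined with the relative density of $f(\Gamma)$ in $H$ (Lemma \ref{Lemma: Preparing quasi-models}) fills the slab $W_G \times A_1 W_H$. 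The main obstacle is the $H$-invariance of $A_1$: unlike in Lemma \ref{Lemma: Preparing quasi-models}, where $\overline{\Xi A} = H$ gave this directly, the extra $G$-factor means one must carefully identify which elements of $\Gamma_{f,K}^2$ genuinely commensurate the localization $\Xi$; if the direct density argument falters, I would enlarge $A_1$ to the smallest $H$-invariant subspace containing it and recheck both conclusions using the uniform discreteness of $\Lambda$ (Lemma \ref{Lemma: powers of uniformly discrete approximate subgroups are uniformly discrete}) to keep the relative compactness claim valid.
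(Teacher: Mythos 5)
Your proposal follows essentially the same route as the paper: intersect $\Gamma_{f,K}^2$ with a relatively compact neighbourhood of $\{e\}\times A$, apply Lemma \ref{Lemma: Schreiber around subspace} to extract $A_1$ and a compact $K'$, take $A_2$ to be the orthogonal complement, deduce normality of $A_1$ from the commensurating action of $\Gamma_{f,K}$ together with the density of the projection to $H/A$, and get relative compactness in the $A_2$-direction from orthogonality. The only (harmless) divergence is that you fix reference neighbourhoods and transfer to arbitrary $W_G, W_H$ via Lemma \ref{Lemma: Intersection of commensurable sets}, whereas the paper works with the given neighbourhoods directly; your hedge about enlarging $A_1$ to an $H$-invariant subspace is not needed, since the commensuration-plus-density argument you describe is exactly how the paper settles that point.
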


 \begin{proof}
 The subset $\Xi:=\left(W_G \times AW_H \right)\cap \Gamma_{f,K}^2$ is an approximate subgroup (Lemma \ref{Lemma: Intersection of approximate subgroups}). According to Lemma \ref{Lemma: Schreiber around subspace} there are a vector subspace $A_1 \subset A$ and a compact subset $K' \subset G \times H $ such that $\Xi \subset K'A_1$ and $A_1 \subset K'\Xi$. Moreover, since $\Gamma_{f,K}$ commensurates $\Xi$, $\Gamma_{f,K}$ normalises $A_1$.  As the projection of $\Gamma_{f,K}$ to $H/A$ is dense (it is equal to the projection of $f(\Gamma)$), $A_1$ is normal. Recall that the action by conjugation on $A$ is by isometries, so the orthogonal subspace $A_2$ of $A_1$ in $A$ is normal too. We have also that 
 $$\left(W_G \times A_2W_H \right)\cap \Gamma_{f,K}^2 \subset (W_G \times A_2W_H) \cap K'A_1$$
 and the latter is relatively compact since $A_1$ and $A_2$ are orthogonal. So Lemma \ref{Lemma: Refinement quasi-model, uniformly discrete case} is true. 
\end{proof}

Our next lemma explores what happens once we have quotiented $A_2$ out. 

\begin{lemma}\label{Lemma: construction lambda star of a BH-app lattice}
 With $A_1$ and $A_2$ as in Lemma \ref{Lemma: Refinement quasi-model, uniformly discrete case} applied to $\Lambda$ and $f$. Define $f^*:\Gamma \rightarrow H^*:=H/A_2$ as the composition of $f$ and the natural projection $H \rightarrow H/A_2$. Write $K^*$ and $A^*$ the projections of $K$ and $A$ to $H^*$ respectively.  There is a neighbourhood of the identity $W^* \subset H^*$ such that $\Lambda^*:=f^{-1}(W^*K^*)$ is an approximate subgroup, $\Lambda $ is covered by finitely many translates of $\Lambda^*$, $\Lambda^*$ is uniformly discrete in $G$ and $\Gamma_{f^*,K^*}$ is relatively dense in the subgroup $\overline{\Gamma_{f^*,K^*}A^*}$.
 \end{lemma}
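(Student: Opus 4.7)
The plan is to verify the four properties sequentially, with most of the technical work concentrated in the defect bookkeeping for $f^*$. Write $\pi : H \to H^* = H/A_2$ for the natural projection, so that $f^* = \pi \circ f$, $K^* = \pi(K)$ and $A^* = \pi(A)$. Fix a symmetric relatively compact neighbourhood $W^* \subset H^*$ of the identity containing $K^*$; I allow myself to shrink or enlarge it throughout.

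For $\Lambda^* = (f^*)^{-1}(W^*K^*)$ to be an approximate subgroup, symmetry follows from the stability of the ball $K^* \subset A^*$ under the isometric conjugation action of $H^*$ on $A^*$ (inherited from $A$), which gives $W^*K^*$ symmetric, combined with the defect identity $f^*(\gamma^{-1}) \in f^*(\gamma)^{-1}K^*$. The finite-cover condition $(\Lambda^*)^2 \subset F \Lambda^*$ comes from the inclusion $f^*(\gamma_1 \gamma_2) \in (W^*K^*)^2 K^*$, whose codomain is relatively compact: the relative density of $f^*(\Gamma)$ in $H^*$ (Lemma \ref{Lemma: Preparing quasi-models}) lets me cover the latter by finitely many translates $\{f^*(\gamma_i) W^*K^*\}$, and the isometric action on $A^*$ lets me commute $K^*$-factors past $W^*$ to reabsorb them into $W^*K^*$.

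For $\Lambda \subset F\Lambda^*$, I pick a relatively compact symmetric neighbourhood $W \subset H$ of the identity with $\pi(W) \subset W^*$. Theorem \ref{Theorem: Hrushovski's quasi-models} yields that $\Lambda$ is commensurable with $f^{-1}(WK)$, and $f^{-1}(WK) \subset (f^*)^{-1}(W^*K^*) = \Lambda^*$, so commensurability provides a finite $F$ with $\Lambda \subset F\Lambda^*$. For uniform discreteness, an element $\gamma \in \Lambda^{*2} \cap V$ for $V \subset G$ relatively compact produces a point of $\Gamma_{f,K}^2 \cap (V \times \pi^{-1}((W^*K^*)^2 K^*))$; since $\pi^{-1}((W^*K^*)^2 K^*) \subset A_2 W_H$ for some relatively compact $W_H$, Lemma \ref{Lemma: Refinement quasi-model, uniformly discrete case} delivers finitely many such $\gamma$, making $\Lambda^{*2}$ locally finite and hence $\Lambda^*$ uniformly discrete.

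For the relative density of $\Gamma_{f^*,K^*}$ in $\overline{\Gamma_{f^*,K^*} A^*}$, I project the inclusion $\{0\} \times A_1 \subset K' \Xi \subset K' \Gamma_{f,K}^2$ from Lemma \ref{Lemma: Refinement quasi-model, uniformly discrete case} via $\mathrm{id} \times \pi$ to get $\{0\} \times A^* \subset K'' \Gamma_{f^*,K^*}^2$ with $K'' \subset G \times H^*$ compact. Combining this with the approximate subgroup property $\Gamma_{f^*,K^*}^2 \subset F \Gamma_{f^*,K^*}$ and the normality of $A^*$ in $G \times H^*$ (so $\Gamma_{f^*,K^*} A^* = A^* \Gamma_{f^*,K^*}$) yields $\Gamma_{f^*,K^*} A^* \subset C \Gamma_{f^*,K^*}$ for some compact $C$; the symmetry of $\Gamma_{f^*,K^*}$ then allows rewriting this as $\Gamma_{f^*,K^*} \cdot C'$ for some compact $C'$, and the inclusion passes to the closure. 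The main obstacle I expect is the bookkeeping of the defect $K^*$ in the first step, where products of $W^*K^*$ produce extra $K^*$-factors that must be controlled via the isometric conjugation action on $A^*$ and, if necessary, a slight enlargement of the window $W^*$.
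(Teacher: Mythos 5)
Your proposal is correct and follows essentially the same route as the paper: uniform discreteness comes from the relative compactness of $\Gamma_{f,K}^2 \cap (W_G \times A_2 W_H)$ in Lemma \ref{Lemma: Refinement quasi-model, uniformly discrete case} (note that relative compactness only bounds the $H$-coordinate, so you still need that $f^{-1}(\text{compact})$ is covered by finitely many translates of the uniformly discrete $\Lambda$ to get actual finiteness), the covering of $\Lambda$ comes from the commensurability clause of the quasi-model theorem, and the relative density claim comes from projecting the $A_1$-density statement and absorbing via $\Gamma_{f^*,K^*}^2 \subset F\Gamma_{f^*,K^*}$ and normality of $A^*$, exactly as in the paper. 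The one point to execute carefully is the approximate-subgroup step: a ``slight enlargement of $W^*$'' applied after the fact does not terminate, since each pass through the defect costs another factor of $K^*$; the correct move is to fix $W^*$ once and for all large enough to contain $CV_0K^*$, where $C$ is a relative-density constant for $f^*(\Gamma)$ in $H^*$ and $V_0$ a small neighbourhood, after which covering any compact target region by finitely many $f^*(\gamma_j)V_0$ lands $\gamma_j^{-1}\gamma$ back in $(f^*)^{-1}(W^*K^*)$ itself with no circularity.
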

 
 \begin{proof}
We have that $\Gamma_{f^*,K^*}$ is an approximate subgroup of $G \times H^*$, and for any relatively compact neighbourhood of the identity $W_G \subset G$ and $W_H \subset H$ the subset $\Gamma_{f,K}^2 \cap \left(W_G \times W_HA_2\right)$ is relatively compact. In addition, 
$$(f^*)^{-1}(W_H^*) = f^{-1}(W_HA_2)$$
where $W_H^*$ denotes the projection of $W_H$ to $H^*$. Thus,  $$(f^*)^{-1}(W_H^*)^2 \cap W_G =f^{-1}(W_HA_2)^2 \cap W_G \subset f^{-1}(W_H')^2 \cap W_G$$ for some potentially larger compact subset $W_H' \subset H$. But $f^{-1}(W_H')$ is contained in a uniformly discrete approximate subgroup since it is covered by finitely many translates of $\Lambda$, so $f^{-1}(W_HA_2)^2 \cap W_G$ is finite. In turn, $(f^*)^{-1}(W_H^*)=f^{-1}(W_HA_2)$ is contained in a uniformly discrete approximate subgroup. This shows that for any choice of symmetric relatively compact neighbourhood of the identity $W^* \subset A^*$, the subset $\Lambda^*:= (f^*)^{-1}(W^*K^*)$ is a uniformly discrete approximate subgroup. Since $\Gamma_{f,K}^2 \cap \left(W_G \times W_HA_1\right)$ is relatively dense in $W_G \times W_HA_1$, and $A_1$ projects surjectively onto $A/A_2$, we have that $\Gamma_{f^*,K^*}^2 \cap \left(W_G \times W_H^*A^*\right)$ is relatively dense in $W_G \times W_H^*A^*$. So there is a compact neighbourhood $W$ of the identity in $G \times H^*$ such that $A^* \subset \Gamma_{f^*,K^*}W$. Hence, $\Gamma_{f^*,K^*}A^* \subset \Gamma_{f^*,K^*}^2W$. As $\Gamma_{f^*,K^*}$ is an approximate subgroup, $\Gamma_{f^*,K^*}$ is relatively dense in $\overline{\Gamma_{f^*,K^*}A^*}$.
 \end{proof}

In order to exploit the construction of $\Lambda^*$, we rely on the following elaboration on the proof of unimodularity of the envelope of an approximate lattice, see \S \ref{Proposition: Envelope of approximate lattice is unimodular}.

  \begin{lemma}\label{Lemma: Elaboration proof unimodularity}
  With notation as in Lemma \ref{Lemma: construction lambda star of a BH-app lattice} and suppose for simplicity that $\Lambda = \Lambda^*$. Then:
  \begin{enumerate}
  \item if $L \subset G \times H$ is a closed subgroup containing $A$ and $B \subset L$ is a Borel subset such that $\Gamma_{f,K}B=L$, then for any Borel subset $B' \subset L$,
  $$\mu_L(B') \leq \mu_L(KB) |f^{-1}(p_H(B'B'^{-1}K)) \cap p_G(B'B'^{-1})| $$
  where $p_G: G \times H \rightarrow G$ and $p_H: G \times H \rightarrow H$ are the natural projections and $\mu_L$ denotes a Haar measure on $L$;
  \item the subgroup $L_0:= \overline{\Gamma_{f,K}A}$ is unimodular. 
  \end{enumerate}
 \end{lemma}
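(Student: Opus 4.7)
My plan for (1) is a covering-and-counting argument refined by a pairing trick. Since $\Gamma_{f,K} B = L \supset B'$, I would decompose $\Gamma_{f,K} = \bigsqcup_{\gamma \in \Gamma}\bigl(\{\gamma\}\times f(\gamma)K\bigr)$ along $p_G$ and set
\[
\Gamma_{B'} := \bigl\{\gamma \in \Gamma : B' \cap \bigl(\{\gamma\}\times f(\gamma)K\bigr) B \neq \emptyset\bigr\}.
\]
Then $B' \subset \bigcup_{\gamma \in \Gamma_{B'}}\bigl(\{\gamma\}\times f(\gamma)K\bigr) B$; since $K \subset A \subset L$, left-invariance of $\mu_L$ gives $\mu_L\bigl((\{\gamma\}\times f(\gamma)K)\, B\bigr) = \mu_L(K B)$ for every $\gamma$, so $\mu_L(B') \leq |\Gamma_{B'}|\,\mu_L(K B)$. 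Each $\gamma \in \Gamma_{B'}$ satisfies $(\gamma, f(\gamma)k) \in B' B^{-1}$ for some $k \in K$, which directly places $\gamma$ into $p_G(B' B^{-1}) \cap f^{-1}\bigl(p_H(B' B^{-1})K\bigr)$. To upgrade this to the $B' B'^{-1}$ form claimed, I would fix a reference element $b'_0 \in B'$ and combine the decomposition of any $b' \in B'$ with that of $b'_0$, exploiting the symmetry $K = K^{-1}$, the fact that the quasi-homomorphism defect lies in $K$, and the normality of $A$ in $L$ (so that conjugation by $f(\gamma_0)$ preserves $K$) to re-express the $B^{-1}$-contributions through the product $B' B'^{-1}$. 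This pairing/re-centering step, converting a $B' B^{-1}$ bound into the $B' B'^{-1}$ bound, is what I expect to be the main technical obstacle.

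My plan for (2) is to choose $B$ cleverly and apply (1) with a specific $B'$. By Lemma~\ref{Lemma: construction lambda star of a BH-app lattice}, $\Gamma_{f,K}$ is relatively dense in $L_0 = \overline{\Gamma_{f,K} A}$, so I can pick a compact Borel set $B \subset L_0$ with $\Gamma_{f,K} B = L_0$ and $0 < \mu_{L_0}(B) < \infty$. For arbitrary $x \in L_0$, I would apply (1) with $B' := Bx^{-1}$ and use the key identity
\[
(B x^{-1})(B x^{-1})^{-1} = B x^{-1} x B^{-1} = B B^{-1},
\]
which is independent of $x$. The bound from (1) then becomes $\mu_{L_0}(Bx^{-1}) \leq \mu_{L_0}(KB) \cdot N$, where $N := \bigl|f^{-1}(p_H(BB^{-1}K)) \cap p_G(BB^{-1})\bigr|$. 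This $N$ is finite: $p_G(BB^{-1})$ is relatively compact in $G$; $f^{-1}(p_H(BB^{-1}K))$ is covered by finitely many translates of $\Lambda = \Lambda^*$ by the quasi-model property (Theorem~\ref{Theorem: Hrushovski's quasi-models}); and $\Lambda$ is uniformly discrete in $G$, so its intersection with any relatively compact set is finite.

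To conclude, with the convention $\mu_{L_0}(B x^{-1}) = \Delta_{L_0}(x)\,\mu_{L_0}(B)$, the uniform bound rewrites as $\Delta_{L_0}(x) \leq \mu_{L_0}(K B)\, N / \mu_{L_0}(B)$ for every $x \in L_0$. Applying the same reasoning at $x^{-1}$ shows that $\Delta_{L_0}$ is bounded both above and below on $L_0$, so the continuous homomorphism $\Delta_{L_0}\colon L_0 \to \mathbb{R}_{>0}^{\times}$ has bounded image. Since the only bounded subgroup of $\mathbb{R}_{>0}^{\times}$ is $\{1\}$, we obtain $\Delta_{L_0} \equiv 1$, i.e.\ $L_0$ is unimodular. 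The crucial design feature of the argument is that the $B' B'^{-1}$ form of (1) is exactly what produces an $x$-independent right-hand side when $B' = B x^{-1}$; a weaker $B' B^{-1}$-bound would leave $x$-dependence in the count and collapse the argument, which is why the more delicate formulation of (1) is unavoidable here.
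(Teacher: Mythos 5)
Your part (2) is essentially the paper's argument (the paper takes $B'=Wg$ rather than $Bx^{-1}$, but the point — that $B'B'^{-1}$ is independent of the translating element, so the modular function is bounded and hence trivial — is the same, and your finiteness argument for $N$ is correct). The problem is part (1), where your plan has a genuine gap that your proposed ``pairing/re-centering'' step cannot close. You bound $\mu_L(B')$ by $|\Gamma_{B'}|\,\mu_L(KB)$, where $|\Gamma_{B'}|$ is the \emph{cardinality} of the set of translates $\gamma KB$ needed to cover $B'$, and you then hope to bound $|\Gamma_{B'}|$ by $|f^{-1}(p_H(B'B'^{-1}K)) \cap p_G(B'B'^{-1})|$. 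That intermediate inequality is simply false: as you yourself observe, membership in $\Gamma_{B'}$ only places $\gamma$ in a set controlled by $B'B^{-1}$, and this dependence on $B$ is not an artifact. Take $L=\mathbb{R}$, $\Gamma_{f,K}=\mathbb{Z}$ (trivial quasi-morphism, $K=\{0\}$), $B=[0,100]$, $B'=[0,1]$: then $|\Gamma_{B'}|=102$ while the right-hand count is $|\mathbb{Z}\cap[-1,1]|=3$. The re-centering idea fails for a structural reason: writing $b'_1=\gamma k b$ and $b'_2=\gamma' k' b''$ with $b,b''\in B$, the product $b'_1 b'_2{}^{-1}$ contains the factor $b\,b''^{-1}$, which ranges over $BB^{-1}$ and does not cancel, so no choice of reference point in $B'$ expresses $\gamma\gamma'^{-1}$ through $B'B'^{-1}$ alone.

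The quantity that \emph{is} controlled by $B'B'^{-1}$ is not the cardinality of the cover but its \emph{multiplicity}. Since $B'\subset\bigcup_{\gamma}\gamma KB$, one has
\begin{equation*}
\mu_L(B')\;\leq\;\sum_{\gamma}\mu_L\bigl(\gamma^{-1}B'\cap KB\bigr)\;\leq\; r\,\mu_L(KB),
\end{equation*}
where $r$ is the maximal number of distinct $\gamma_1,\dots,\gamma_r\in\Gamma_f$ with $\gamma_1^{-1}B'\cap\cdots\cap\gamma_r^{-1}B'\neq\emptyset$. A common point $x$ of the sets $\gamma_i^{-1}B'$ gives $\gamma_i x,\gamma_j x\in B'$, hence $\gamma_i\gamma_j^{-1}\in B'B'^{-1}$ for all $i,j$ — the set $B$ has disappeared. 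Fixing one index, using that $f$ is a quasi-homomorphism with defect in $K$ and that $\Gamma_f$ projects injectively to $G$, one gets $r\leq|f^{-1}(p_H(B'B'^{-1}K))\cap p_G(B'B'^{-1})|$, which is the stated bound. This is the step your proposal is missing; once (1) is proved this way, your part (2) goes through as written.
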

 
 \begin{proof}
 Choose $X \subset \Gamma_f:=\{(\gamma, f(\gamma)) : \gamma \in \langle \Lambda \rangle\}$ such that $B' \subset \bigcup_{\gamma \in X} \gamma KB$. By counting redundancies, we have 
 \begin{align*}
 \mu_L(B') &\leq \mu_L\left(\bigcup_{\gamma \in X} KB \cap \gamma^{-1}B'\right)r \\
 &\leq  \mu_L(KB)r
 \end{align*}
 where $r$ denotes the maximal number of distinct $\gamma_1, \ldots, \gamma_s \in X$ such that $\gamma_1^{-1}B' \cap \ldots \cap \gamma_r^{-1}B' \neq \emptyset$.
 Notice that $\gamma_{1}^{-1}B' \cap \ldots \cap \gamma_{r}^{-1}B' \neq \emptyset$ implies that $X'X'^{-1} \subset B'B'^{-1}$ where $X':=\{\gamma_1, \ldots, \gamma_r\}$. Since $\gamma_1, \ldots , \gamma_r \in \Gamma_f$ and $f$ is a quasi-homomorphism with defect $K$, we have the inclusion $$p_G(X'X'^{-1}) \subset f^{-1}(p_H(B'B'^{-1}K)) \cap p_G(B'B'^{-1}).$$ But the projection of $\Gamma_f$ to $G$ is injective. So $r$ is at most $|f^{-1}(p_H(B'B'^{-1}K)) \cap p_G(B'B'^{-1})|$. This proves (1). 

By Lemma \ref{Lemma: construction lambda star of a BH-app lattice} (recall that we assume $\Lambda=\Lambda^*$), there is a compact subset $W$ of $L_0$ such that $\Gamma_{f,K}W=L_0$. Write $s:=|f^{-1}(p_H(WW^{-1}K)) \cap p_G(WW^{-1})|$.  The quantity $s$ is finite since $f^{-1}(p_H(WW^{-1}K))$ is uniformly discrete and $W$ is compact. Take $g \in L_0$. By applying (1) with $L:=L_0$, $B:=W$ and $B':=Wg$ we have 
$$\mu_{L_0}(Wg) \leq \mu_{L_0}(KW)s.$$
In other words, the modular function of $L_0$ is bounded above i.e. is trivial. This concludes the proof of (3). 
\end{proof}
 
 The second ingredient is a coupling argument developed in \cite{bjorklund2019borel}. Given two closed subsets $X,Y$ of a locally compact group $G$, one can define the topological coupling 
 $$\Omega_{X,Y}:=\overline{\{(gX,gY) : g \in G\}} \subset \mathcal{C}(G) \times \mathcal{C}(G)$$ as the closure  of the orbit of $(X,Y)$ in $\mathcal{C}(G) \times \mathcal{C}(G)$ under the diagonal $G$-action. The system $\Omega_{X,Y}$ comes naturally with continuous $G$-equivariant projections onto $\Omega_X$ and $\Omega_Y$ and can be used to transfer some measure-theoretic properties from $\Omega_X$ to $\Omega_Y$ and conversely.
 
 \begin{lemma}\label{Lemma: Stationary measures and relative density}
 Let $X$ be a closed subset of a locally compact group $G$. Let $\mu$ be a Borel probability measure on $G$ and $Y \subset G$ be another closed subset such that there exists a compact subset $C \subset G$ with $X \subset YC$. If there exists a proper $\mu$-stationary probability measure on $\Omega_X$, then there exists a proper $\mu$-stationary probability measure on $\Omega_Y$. 
 \end{lemma}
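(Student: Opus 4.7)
The plan is to run the topological coupling argument on $\Omega_{X,Y}\subset \mathcal{C}(G)\times\mathcal{C}(G)$ and transfer a stationary measure from the $\Omega_X$-factor to the $\Omega_Y$-factor, using the compact ``thickening'' $C$ to rule out mass escaping to $\emptyset$. The first step is the key closedness observation: for every pair $(X',Y')\in\Omega_{X,Y}$ we still have $X'\subset Y'C$. Indeed, writing $(X',Y')=\lim_n(g_nX,g_nY)$, any $x\in X'$ is a limit $x=\lim x_n$ with $x_n\in g_nX\subset g_nYC$, so we can write $x_n=y_nc_n$ with $y_n\in g_nY$ and $c_n\in C$; by compactness of $C$ we may pass to a subsequence with $c_n\to c\in C$, and then $y_n=x_nc_n^{-1}\to xc^{-1}$, forcing $xc^{-1}\in Y'$ by Chabauty convergence of $g_nY$ to $Y'$. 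In particular, $Y'=\emptyset$ implies $X'=\emptyset$, i.e.\ $\pi_Y^{-1}(\{\emptyset\})\subset \pi_X^{-1}(\{\emptyset\})$, where $\pi_X,\pi_Y$ are the two continuous $G$-equivariant projections from the compact metrizable $G$-space $\Omega_{X,Y}$.

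Next I would lift a proper $\mu$-stationary measure $\nu_X$ on $\Omega_X$ to a $\mu$-stationary measure $\widetilde\nu$ on $\Omega_{X,Y}$ with $(\pi_X)_*\widetilde\nu=\nu_X$. To do so I consider the convex set
\[
\mathcal{M}_{\nu_X}:=\{\nu\in\mathrm{Prob}(\Omega_{X,Y}):\,(\pi_X)_*\nu=\nu_X\},
\]
which is non-empty (use the disintegration of any probability measure on the compact metrizable space $\Omega_{X,Y}$, or a Borel section argument), weak-$*$ compact, and convex. Because $\nu_X$ is $\mu$-stationary and $\pi_X$ is $G$-equivariant, the convolution operator $\nu\mapsto \mu*\nu$ preserves $\mathcal{M}_{\nu_X}$. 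The Markov--Kakutani fixed point theorem (or, equivalently, a Cesàro averaging argument $\nu_N=\frac{1}{N}\sum_{n=0}^{N-1}\mu^{*n}*\nu_0$ followed by a weak-$*$ limit) then produces a fixed point $\widetilde\nu\in\mathcal{M}_{\nu_X}$, which is the desired $\mu$-stationary lift.

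Finally, push forward via $\pi_Y$ and set $\nu_Y:=(\pi_Y)_*\widetilde\nu$. Since $\pi_Y$ is continuous and $G$-equivariant, $\nu_Y$ is a $\mu$-stationary Borel probability measure on $\Omega_Y$. Properness follows from the first step:
\[
\nu_Y(\{\emptyset\})=\widetilde\nu(\pi_Y^{-1}(\{\emptyset\}))\le \widetilde\nu(\pi_X^{-1}(\{\emptyset\}))=\nu_X(\{\emptyset\})=0.
\]
The main conceptual obstacle is constructing the lift $\widetilde\nu$ in the non-compact factor setup, but this is exactly what the compactness and convexity of $\mathcal{M}_{\nu_X}$ together with stationarity of $\nu_X$ deliver; the slightly subtle bookkeeping is the closedness of the inclusion relation $X'\subset Y'C$ under Chabauty--Fell limits, which genuinely uses that $C$ is compact rather than merely closed.
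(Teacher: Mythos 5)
Your proof is correct and follows essentially the same route as the paper: the same Chabauty--Fell closedness argument showing $X'\subset Y'C$ persists on $\Omega_{X,Y}$ (which is where compactness of $C$ is genuinely used), followed by lifting $\nu_X$ to a $\mu$-stationary measure on the coupling and pushing forward to $\Omega_Y$. The only cosmetic difference is that you construct the stationary lift explicitly via Markov--Kakutani on the fibre $\mathcal{M}_{\nu_X}$, whereas the paper invokes the Hahn--Banach argument of \cite[Appendix 1]{bjorklund2019borel} for the same purpose.
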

 
 \begin{proof}
 First of all, we claim that if $(X',Y') $ belongs to $\Omega_{X,Y}$ then $X' \subset Y'C$. Indeed, by construction there is a sequence $(g_n)_{n \geq 0}$ such that $g_nX \rightarrow X'$ and $g_n Y \rightarrow Y'$ as $n$ goes to infinity. But $g_nX \subset g_nYC$. Hence, if $x \in X'$ we can find sequences $(y_n)_{n \geq 0}$ and $(c_n)_{n \geq 0}$ of elements of $Y$ and $C$ respectively such that $g_ny_nc_n \rightarrow x$ as $n$ goes to infinity. Upon considering a subsequence, we may assume that $(c_n)_{n \geq 0}$ converges to $c \in C$. So $g_ny_n \rightarrow xc^{-1}$ as $n$ goes to infinity. In turn, this means that $xc^{-1} \in Y'$ which establishes the claim. 
 
 Fix now a proper $\mu$-stationary probability measure $\nu_X$ on $\Omega_X$. Let $p_X: \Omega_{X,Y} \rightarrow \Omega_X$ and  $p_Y: \Omega_{X,Y} \rightarrow \Omega_Y$ denote the natural projections. By a Hahn--Banach argument, there is a $\mu$-stationary Borel probability measure $\nu_{X,Y}$ on $\Omega_{X,Y}$ such that $\pi_X^*\nu_{X,Y}=\nu_X$, see \cite[Appendix 1]{bjorklund2019borel}. Now, $\nu_Y:=\pi_Y^*\nu_{X,Y}$ is a $\mu$-stationary Borel probability measure on $\Omega_Y$ and we claim it is proper. Indeed, suppose $\nu_Y(\{\emptyset\}) > 0$, then $\nu_{X,Y} ( \pi_Y^{-1}(\{\emptyset\})) > 0$. But $(X', \emptyset) \in \Omega_{X,Y}$ implies $$X' \subset \emptyset \cdot C = \emptyset$$ according to the first paragraph. So $\nu_{X,Y}(\{(\emptyset, \emptyset)\}) > 0$ which yields $\nu_X(\{\emptyset\}) > 0$. A contradiction. 
 \end{proof}
 
 \begin{proof}[Proof of Theorem \ref{Proposition: Detail resultat principal}(4).]
Let $\Lambda^*$ be given by Lemma \ref{Lemma: construction lambda star of a BH-app lattice}. Since $\Lambda^*$ is a uniformly discrete approximate subgroup finitely many translates of which cover $\Lambda$, $\Lambda^*$ is a BH-approximate lattice as well (Lemma \ref{Lemma: Stationary measures and relative density} or \cite{bjorklund2019borel}). To prove the remaining parts of Theorem \ref{Proposition: Detail resultat principal}, we may assume that we had started with $\Lambda = \Lambda^*$ and $f=f^*$. Define $L_0:=\overline{\Gamma_{f,K}A}$. Fix an admissible probability measure $\mu$ on $G$ (Definition \ref{Definition: BH-approximate lattice}) and a proper $\mu$-stationary probability measure $\nu$ on $\Omega_{\Lambda}$. Since $\{(\lambda, e) \in G \times H : \lambda \in \Lambda\}$ is contained in $L_0 (\{e\} \times W)$ for some compact subset $W$, Lemma \ref{Lemma: construction lambda star of a BH-app lattice} shows that there is a $\mu$-stationary probability measure on $\left(G \times H\right)/L_0$. Since the projection of $L_0$ to $H$ is dense, $\left(\overline{\Gamma} \times H\right) / L_0$ admits a Haar measure of finite total mass where we look at $\Gamma$ as a subgroup of $G \times \{e\}$,  see Lemma \ref{Lemma: Technical bit invariant measures}. As $L_0$ is unimodular (Lemma \ref{Lemma: Elaboration proof unimodularity}), we have that $\overline{\Gamma} \times H$ is unimodular \cite[\S I]{raghunathan1972discrete}.

We claim that $\Lambda$ is an approximate lattice in $\overline{\Gamma}$. Since the projection of $L_0$ to $\overline{\Gamma}$ is dense, the action of $\overline{\Gamma}$ on $\left(\overline{\Gamma} \times H\right) / L_0$ is minimal (e.g. \cite[2.2.3]{zimmer2013ergodic}). So take any relatively compact neighbourhood of the identity $W \subset \overline{\Gamma} \times H$. Then $\Gamma W$ projects surjectively to $\left(\overline{\Gamma} \times H \right)/ L_0$. One can therefore build a Borel subset $B \subset \Gamma W$ whose projection to $\left(\overline{\Gamma} \times H \right)/ L_0$ is one-to-one and surjective onto a set of full measure (using \cite{MR7916}). By construction, the projection of $B$ to $H$ is relatively compact. Now, Lemma \ref{Lemma: construction lambda star of a BH-app lattice} tells us that there exists a relatively compact neighbourhood of the identity $W \subset L_0$ such that $W\Gamma_{f,K}= L_0$. So $\mathcal{F}:=BW$ satisfies $\mathcal{F} \Gamma_{f,K} = \overline{\Gamma} \times H$ and $$\mu_{\overline{\Gamma} \times H}(\mathcal{F}K)=\mu_{\overline{\Gamma} \times H/L_0}(\overline{\Gamma} \times H/L_0)\mu_{L_0}(WK) <\infty.$$  The above equality is a consequence of the usual quotient formula for unimodular subgroups \cite[\S 1]{raghunathan1972discrete}. Let $W_H \subset H$ denote a relatively compact neighbourhood of the identity such that $\Lambda \supset f^{-1}(W_HK)$ and choose another neighbourhood of the identity $W_H'$ such that $W_H'W_H'^{-1} \subset W_H$. Consider also any Borel subset $B \subset \overline{\Gamma}$ such that $BB^{-1} \cap \Lambda =\{e\}$. We want to show that $B$ has finite Haar measure and then apply the simple criterion \cite[App. 1]{hrushovski2020beyond} to prove that $\Lambda$ is an approximate lattice. But $\mu_{\overline{\Gamma} \times H}(B \times W_H')$ is bounded above by $\mu_{\overline{\Gamma} \times H}(\mathcal{F}K)|\Lambda \cap BB^{-1}|= \mu_{\overline{\Gamma} \times H}(\mathcal{F}K)$ (Lemma \ref{Lemma: Elaboration proof unimodularity}) which is finite. So the proof is complete. 
 \end{proof}
 
 It only remains to prove the technical result: 
 
 \begin{lemma}\label{Lemma: Technical bit invariant measures}
 Let $\Gamma$ be a closed subgroup of  a product of two locally compact groups $G \times H$. Suppose that $\mu$ is an admissible Borel probability measure on $G$ and $\nu$ is a $\mu$-stationary Borel probability measure on $\left(G \times H\right)/\Gamma$. Suppose that the projection of $\Gamma$ to $H$ is dense. Then there is a Borel probability measure on $G \times H$ invariant under $\Gamma$ and $H$. 
 \end{lemma}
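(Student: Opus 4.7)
The plan is to show that the $\mu$-stationary measure $\nu$ on $(G \times H)/\Gamma$ is in fact invariant under the left action of $H$ (embedded as $\{e\} \times H$), and then to use the standard correspondence between invariant measures on a homogeneous space and on its total space to produce the desired Borel probability measure on $G \times H$ invariant under both $\Gamma$ (acting on the right) and $H$ (acting on the left).

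Since the left $G$-action (via $G \times \{e\}$) and the left $H$-action on $(G \times H)/\Gamma$ commute, the pushforward $h_*\nu$ remains $\mu$-stationary for every $h \in H$. The central step is to show $h_*\nu = \nu$. Exploiting the triviality of the right $\Gamma$-action on the quotient, one may rewrite the coset representative by $\gamma^{-1}$ to obtain the identity
\[
(e, \gamma_H) \cdot (g, h')\Gamma \;=\; (g \gamma_G^{-1},\, \gamma_H h' \gamma_H^{-1})\Gamma \qquad \text{for every } \gamma = (\gamma_G, \gamma_H) \in \Gamma,
\]
which recasts the left $(e, \gamma_H)$-action as a combination of a left $G$-translation by $\gamma_G^{-1}$ and a conjugation on the $H$-factor of the representative. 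Using $\mu$-stationarity to absorb the $G$-translations (by averaging against convolutions $\mu^{*n}$ whose supports densely cover $G$ by admissibility) and controlling the conjugation (which is trivial when $H$ is abelian, as is the case for the Euclidean rigid subgroup $A \subset H$ provided by the quasi-model theorem, on which $H$ acts by isometries), I conclude that $\nu$ is invariant under $\pi_H(\Gamma)$. Weak-$\ast$ continuity of the $H$-action on probability measures together with the density of $\pi_H(\Gamma) \subset H$ then upgrade this to full $H$-invariance of $\nu$.

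With $\nu$ being $H$-invariant on the quotient, the standard correspondence between invariant measures on $(G \times H)/\Gamma$ and on $G \times H$ (cf.\ Raghunathan, Ch.\ I) produces a Borel probability measure on $G \times H$ invariant under the right action of $\Gamma$ and the left action of $H$, as required. The principal obstacle will be the central step: controlling the interplay between the left $G$-translation and the conjugation on the $H$-factor in the identity above. This is immediate when $H$ is abelian, but in the general non-abelian situation one must exploit the specific features of $H$ provided by the quasi-model theorem---specifically the rigid Euclidean subgroup $A \subset H$, the isometric $H$-action on $A$, and the compatibility of $\Gamma_{f,K}$ with this structure---to ensure that the conjugation terms can be absorbed alongside the $G$-translations.
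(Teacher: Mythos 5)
Your argument has a genuine gap at what you yourself call the central step, and the mechanism you propose for closing it does not work. First, the displayed identity $(e,\gamma_H)\cdot(g,h')\Gamma=(g\gamma_G^{-1},\gamma_H h'\gamma_H^{-1})\Gamma$ is obtained by multiplying the representative on the right by $\gamma^{-1}$, which does not change the coset; it is therefore vacuous and carries no information relating the left $H$-action to the left $G$-action (note also that $g\mapsto g\gamma_G^{-1}$ is a right translation of the representative, not an element of the left $G$-action you would need). Second, $\mu$-stationarity is the averaged identity $\mu*\nu=\nu$; it does not yield $g_*\nu=\nu$ for individual $g$ in the support of $\mu^{*n}$, so there is no sense in which the $G$-translations can be ``absorbed by averaging against convolutions''. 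Third, reducing to abelian $H$ does not suffice: the lemma is stated, and used, for a general locally compact $H$ --- in the application $H$ is the target of a quasi-model and only the rigid subgroup $A\subset H$ is Euclidean, not $H$ itself. Finally, even granted $H$-invariance of $\nu$, producing a probability measure invariant under both $\Gamma$ and $H$ is not the plain Raghunathan correspondence; one needs an extra step (ergodic decomposition of $\nu$ under $H$, then a second run of the main argument with the roles of $G$ and $H$ exchanged to get invariance under $\overline{p_G(\Gamma)}$).

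The paper's route to the $H$-invariance is quite different and is where the hypothesis on $\Gamma$ actually enters. For admissible measures $\mu_1,\mu_2$ on $H$ one forms $\nu_i=\mu_i*\nu$; these are still $\mu$-stationary (the $H$- and $G$-actions commute) and lie in the Haar measure class on $(G\times H)/\Gamma$. A standard computation shows that the Radon--Nikodym derivative $d\nu_2/d\nu_1$ satisfies $\int|\phi(x)-\phi(hx)|^2\,d\mu(h)\,d\nu_1(x)=0$, i.e.\ is $G$-invariant almost everywhere; since the density of $p_H(\Gamma)$ in $H$ makes the $G$-action on $(G\times H)/\Gamma$ ergodic for the Haar measure class, the derivative is constant and $\nu_1=\nu_2$. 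Letting $\mu_1,\mu_2$ run through an approximate identity gives $h_*\nu=\nu$ for all $h\in H$. The density hypothesis is thus exploited through ergodicity of the commuting action, not through a pointwise coset manipulation; your proposal is missing precisely this idea.
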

 
 \begin{proof}
 For any admissible probability measures $\mu_1, \mu_2$ on $H$, define $\nu_1=\mu_1 * \nu$ and $\nu_2 = \mu_2 * \nu$. Both $\nu_1$ and $\nu_2$ are $\mu$-stationary and in the Haar measure class. Let $\phi$ denote the Radon-Nykodym derivative of $\nu_2$ against $\nu_1$. An easy computation provides 
 $$\int_{G \times G} |\phi(g) - \phi(hg)|^2d\mu(h)d\nu_1(g) = 0$$
 i.e. $\phi$ is invariant under the action of $G$.  But $G$ acts ergodically on $G \times H/\Gamma$ according to \cite[2.2.3]{zimmer2013ergodic}. Thus, $\nu_1 = \nu_2$. Since this is true for all $\mu_1$ and $\mu_2$ we find by an approximation argument $\nu_1 = \nu$ and $\nu$ is invariant under translation by elements of $H$. Take a measure $\nu'$ appearing in the $H$-ergodic decomposition of $\nu$. Then $\nu'$ is $H$-invariant and up to translation by an element of $G$, we can assume that its support is contained in the orbit closure of $\Gamma$ (seen as a coset). Now, we see - by repeating the above idea with $G$ playing the role of $H$ for instance - that $\nu'$ is also invariant under $\overline{p_G(\Gamma)}$ where $p_G: G \times H \rightarrow G$ is the natural projection. This yields the desired result.  
 \end{proof}
 
 Pushing these ideas further one could obtain a generalisation of the Meyer-type theorem due to Hrushovski regarding approximate lattices of abstractly semi-simple locally compact groups. 
 
\begin{corollary}\label{Corollary: minimal laminar supset is discrete in semi-simple groups}
Let $\Lambda$ be a uniformly discrete approximate subgroup contained in an abstractly semi-simple locally compact group $G$ and such that $\Comm_G(\Lambda)$ is dense in $G$. Then there is a uniformly discrete \emph{laminar} approximate subgroup $\Lambda^*$ finitely many translates of which cover $\Lambda$.
\end{corollary}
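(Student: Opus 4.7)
The plan is to execute the quasi-model construction of the preceding section to produce the auxiliary approximate subgroup $\Lambda^*$ from Lemma~\ref{Lemma: construction lambda star of a BH-app lattice}, and then to use abstract semi-simplicity of $G$ in place of the BH-lattice hypothesis of Theorem~\ref{Proposition: Detail resultat principal}(4) in order to upgrade $\Lambda^*$ from uniformly discrete to laminar.

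Set $\Gamma := \Comm_G(\Lambda)$, which is dense in $G$ by assumption, and apply Theorem~\ref{Theorem: Hrushovski's quasi-models} together with Lemma~\ref{Lemma: Preparing quasi-models} to $\Lambda^2 \cap \Gamma$ to produce a quasi-model $f : \Gamma \to H$ with defect contained in a Euclidean ball $K \subset A \cong \mathbb{R}^n$. Lemma~\ref{Lemma: construction lambda star of a BH-app lattice} then yields the candidate $\Lambda^* := (f^*)^{-1}(W^* K^*)$: a uniformly discrete approximate subgroup of $G$ finitely many translates of which cover $\Lambda$, with $\Gamma_{f^*,K^*}$ relatively dense in $L_0 := \overline{\Gamma_{f^*,K^*} A^*}$ and $A^* \subset L_0$ a closed normal Euclidean subgroup carrying the residual defect $K^*$ of $f^*$.

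It suffices to prove $A^* = \{0\}$: then $f^* : \Gamma \to H^*$ is an honest group homomorphism (the defect vanishes), $f^*(\Lambda^*) \subset W^*$ is relatively compact, and $(f^*)^{-1}(W') \subset \Lambda^*$ for any sufficiently small open neighbourhood $W'$ of the identity in $H^*$, so $f^*$ is a good model for $\Lambda^*$ and $\Lambda^*$ is in particular laminar. To force $A^* = \{0\}$, observe that the conjugation action of $L_0$ on $A^* \cong \mathbb{R}^m$ is by isometries (guaranteed by Lemma~\ref{Lemma: Preparing quasi-models}) and therefore, modulo a choice of origin, factors through a continuous homomorphism $\rho : L_0 \to O(m)$. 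Since $A^*$ is abelian it acts trivially on itself, so $\rho$ descends to $L_0/A^*$, and by density of $p_G(\Gamma_{f^*,K^*}) \supseteq \Gamma$ in $G$ extends continuously to a homomorphism $G \to O(m)$; abstract semi-simplicity of $G$ forces this homomorphism to be trivial. Hence $A^*$ is central in $L_0$.

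The main obstacle is promoting ``central'' to ``trivial''. Here one imports the splitting argument from Hrushovski's proof of \cite[Thm 6.9]{hrushovski2020beyond}: a continuous central extension of a dense subgroup of an abstractly semi-simple locally compact group by $\mathbb{R}^m$, subject to the topological constraints produced by our construction, must split, and a splitting allows one to twist $f^*$ by a section into a genuinely homomorphic refinement --- effectively erasing the $A^*$-factor from $H^*$. Once this reduction is achieved, the required properties of $\Lambda^*$ (uniform discreteness, commensurability with $\Lambda$, and now laminarity) are already recorded in Lemma~\ref{Lemma: construction lambda star of a BH-app lattice}, so the corollary follows.
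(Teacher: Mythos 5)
Your first half does match the route the paper intends: feed $\Lambda$ and $\Gamma=\Comm_G(\Lambda)$ into Lemma~\ref{Lemma: Preparing quasi-models} and Lemma~\ref{Lemma: construction lambda star of a BH-app lattice} to produce a uniformly discrete $\Lambda^*$ finitely many translates of which cover $\Lambda$. But the entire remaining content of the corollary is the laminarity of $\Lambda^*$, i.e.\ erasing the residual Euclidean defect carried by $A^*$, and there your argument has genuine gaps. On the centrality step: the homomorphism $\gamma\mapsto\rho(\gamma,\bar f(\gamma))$ is defined on the abstract dense subgroup $\Gamma$ via the (typically wildly discontinuous) graph of the induced map to $H^*/A^*$, so it carries no continuity for the topology induced from $G$ and there is no reason it extends continuously to $G$; and even granting an extension, an abstractly semi-simple locally compact group can admit non-trivial continuous homomorphisms to $O(m)$ (any compact semi-simple group does), so ``semi-simplicity forces triviality'' is unjustified. (Centrality is also more than is needed --- preservation of an orthogonal decomposition, which the isometric action gives for free, is what the surrounding lemmas actually use.)

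More seriously, the step you yourself flag as ``the main obstacle'' is the theorem. The assertion that a continuous central extension of (a dense subgroup of) an abstractly semi-simple group by $\mathbb{R}^m$ must split is false as a general statement: $\PSL_2(\mathbb{R})$ is abstractly simple yet admits non-split continuous central extensions by $\mathbb{R}$ (push out the universal cover along $\mathbb{Z}\hookrightarrow\mathbb{R}$). So everything rests on the unspecified ``topological constraints produced by our construction'', that is, on showing that the quasi-morphism $f^*$ with defect in the central $A^*$ gives rise to a continuous quasi-morphism of $G$ (or of $L_0$) and that such quasi-morphisms on abstractly semi-simple groups are at bounded distance from homomorphisms. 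That is precisely the content of Hrushovski's Theorem~6.9 and of this corollary; importing it as a black box, without extracting which constraints your $L_0$ and $\Gamma_{f^*,K^*}$ actually satisfy, leaves the proof incomplete at its crux.
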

 
 When $\Lambda$ is an approximate lattice, then $\Lambda$ and $\Lambda^*$ are commensurable by Lemma \ref{Corollary: Towers of star-approximate subsets are commensurable}. So $\Lambda$ is already laminar, in effect generalising \cite[Thm. 7.4]{hrushovski2020beyond}.

\section{A laminar approximate lattice that is not strong}\label{Subsection: Consequences of the existence of non-laminar approximate lattices}
The proof of Theorem \ref{Proposition: Detail resultat principal} (4) is now an easy consequence of the laminarity of $\star$-approximate lattices. 

\begin{proof}[Proof of Theorem \ref{Proposition: Detail resultat principal} (4)]
According to \cite[\S 7.9]{hrushovski2020beyond}, there exists an approximate lattice $\Lambda$ that is not laminar. According to Theorem \ref{Proposition: Detail resultat principal} (1), $\Lambda$ is not commensurable with a $\star$-approximate lattice. 
\end{proof}

We turn now to the proof of Proposition \ref{Proposition: laminar app lattice not close to a strong app lattice} which will be more involved. This will be a consequence of Theorem \ref{Proposition: Detail resultat principal} and the following:

\begin{proposition}\label{Proposition: presque la fin}
Let $\Gamma$ be a lattice in a rank one simple Lie group $G$ and $m \geq 0$ be an integer. There is an approximate lattice $\Lambda \subset \Gamma$ such that $\Lambda^m$ does not contain a model set. 
\end{proposition}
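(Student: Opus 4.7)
The plan is to exploit the abundance of non-trivial quasi-morphisms on lattices in rank one simple Lie groups, and to turn the defect of such a quasi-morphism into a genuine obstruction to $\Lambda^m$ containing a model set. Since $\Gamma$ is (relatively) Gromov hyperbolic, results of Brooks, Epstein--Fujiwara and Mineyev--Monod--Shalom supply an infinite-dimensional family of quasi-morphisms $\phi\colon \Gamma \to \mathbb{R}$ whose classes in $H^2_b(\Gamma,\mathbb{R})$ are non-trivial; equivalently, such a $\phi$ is not at bounded distance from any genuine homomorphism $\Gamma\to\mathbb{R}$. Given $m$, I would pick $\phi$ with defect $D = D(\phi)$ arbitrarily large (by rescaling), and build $\Lambda$ by mimicking the cut-and-project construction on the quasi-lattice $\tilde\Gamma := \{(\gamma,\phi(\gamma)) : \gamma \in \Gamma\} \subset G \times \mathbb{R}$: namely, $\Lambda := p_G\bigl(\tilde\Gamma K \cap (G \times [-R,R])\bigr)$ for a symmetric compact window $K \subset \{e\}\times\mathbb{R}$ absorbing the defect, and a sufficiently large $R$. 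Here $\tilde\Gamma K$ is an honest approximate subgroup of $G \times \mathbb{R}$ by the discussion preceding Lemma \ref{Lemma: Preparing quasi-models}.

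One then checks that $\Lambda$ is an approximate lattice in $G$. Uniform discreteness is inherited from $\Gamma$, and finite co-volume follows from a standard covolume computation analogous to the one for classical model sets, using that $\Gamma$ is itself a lattice in $G$ and that the window has finite Lebesgue measure.

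The heart of the argument is showing that $\Lambda^m$ contains no model set. By Lemma \ref{Lemma: good models and model sets} combined with Lemma \ref{Corollary: Towers of star-approximate subsets are commensurable}, a model set $M \subset \Lambda^m$ would force $\Lambda^m$ to admit a good model $\psi\colon \langle\Lambda^m\rangle \to H$ with $\overline{\psi(\Lambda^m)}$ compact and $\psi^{-1}(U)\subset \Lambda^m$ for some identity neighbourhood $U \subset H$. Since $\phi(\Lambda^m) \subset [-(mR+(m-1)D),\, mR+(m-1)D]$, the quasi-morphism $\phi$ is uniformly bounded on $\psi^{-1}(U)$. Iterating along the descending tower from Lemma \ref{Lemma: Charac. good models} and performing a Bavard-type averaging, one should extract a homomorphism $h\colon \Gamma_0 \to \mathbb{R}$ on a finite-index subgroup $\Gamma_0 \subset \Gamma$ at bounded distance from $\phi|_{\Gamma_0}$. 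Choosing $D$ sufficiently large relative to $m$ and to the commensurability constants entering this chain then contradicts the non-triviality of the class of $\phi$.

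The main obstacle is precisely the last step: carefully bookkeeping how the defect of $\phi$ propagates through the $m$-fold multiplication defining $\Lambda^m$ and through the commensurability constants of Lemma \ref{Lemma: Charac. good models}, and extracting from a would-be good model a genuine homomorphism on a finite-index subgroup of $\Gamma$ at bounded distance from $\phi$. The rank-one hypothesis enters the argument precisely here, through the infinite-dimensional supply of non-trivial quasi-morphism classes modulo homomorphisms; in higher rank many lattices have vanishing second bounded cohomology and the construction breaks down, which is consistent with the paper's subsequent open question.
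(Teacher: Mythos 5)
There is a genuine gap, and it occurs at the very first step: the set you construct is not an approximate lattice in $G$. Your $\Lambda = p_G\bigl(\tilde\Gamma K \cap (G\times[-R,R])\bigr)$ is just $\phi^{-1}\bigl([-R,R]+K\bigr)$, the preimage under the quasi-morphism of a bounded window. Since $\phi$ is not at bounded distance from a homomorphism it is in particular unbounded, so $\Gamma$ is \emph{not} covered by finitely many translates of $\Lambda$, i.e.\ $\Lambda$ is not commensurable with $\Gamma$. By Lemma \ref{Corollary: Towers of star-approximate subsets are commensurable}, an approximate lattice contained in the lattice $\Gamma$ must be commensurable with $\Gamma$; hence your $\Lambda$ has infinite co-volume and the statement is not about it. The ``standard covolume computation'' you invoke needs the cut-and-project data to be an honest lattice in $G\times\mathbb{R}$ projecting densely to the internal factor; the graph $\{(\gamma,\phi(\gamma))\}$ is uniformly discrete but has infinite co-volume in $G\times\mathbb{R}$ (it already projects bijectively onto a lattice of $G$), so no such computation applies. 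Relatedly, rescaling $\phi$ to make the defect large is a no-op: it rescales the window by the same factor and changes nothing intrinsic. Finally, even granting a fix, your proposed contradiction is circular: $\phi$ is bounded on $\Lambda^m$ \emph{by construction}, with or without a good model, so boundedness of $\phi$ on $\psi^{-1}(U)$ cannot be the obstruction.

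The paper's proof repairs exactly these points. It first produces (Lemma \ref{Lemma: Non-laminar approximate lattice}) a genuine, non-laminar approximate lattice in $G\times\mathbb{R}$ by replacing the graph of $\phi$ with the arithmetic thickening $\{(\gamma,x)\in\Gamma\times\mathbb{Z}[\sqrt2] : |f(\gamma)-\tau(x)|\le 1\}$, whose fibres are relatively dense Meyer sets of $\mathbb{R}$ — this is what restores finite co-volume. The witnessing approximate lattice in $\Gamma$ is then the projection $\Lambda_G$ of a thin strip $\Lambda^2\cap(G\times[-\epsilon,\epsilon])$, which \emph{is} commensurable with $\Gamma$ (so it is laminar — that is the whole point of the proposition). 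The argument is by contradiction: if $\Lambda_G^m$ contained a model set, a Freiman section of $p_G$ over the strip (which is a genuine partial homomorphism once $\epsilon$ is small compared to the discreteness scale of $\Lambda^{9m}$ in the $\mathbb{R}$-direction — this is where $m$ enters, not through the size of the defect) would transport the good model back to the strip, and commensurability of all strips together with \cite[Prop.~3.7]{machado2019goodmodels} would force $\Lambda^4$ to be laminar, contradicting the quasi-morphism obstruction. You would need to incorporate both the arithmetic thickening and this lifting mechanism for your outline to become a proof.
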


Since $\Lambda$ is an approximate lattice in $\Gamma$, it is commensurable with $\Gamma$. Therefore, Proposition \ref{Proposition: presque la fin} provides laminar approximate lattices that are arbitrarily far from model sets. This is in stark contrast with the methods coming from additive combinatorics which usually yield that a fixed power - usually $4$ - is already a model set. We invite the interested reader to compare with Theorem \ref{Proposition: Detail resultat principal} (1) and \cite{machado2019goodmodels}. 

In order to prove Proposition \ref{Proposition: presque la fin} we proceed as in the proof of Theorem \ref{Proposition: Detail resultat principal} (4) and start by considering an non-laminar approximate lattice. We take advantage of the following construction:

\begin{lemma}\label{Lemma: Non-laminar approximate lattice}
Let $\Gamma$ be a lattice in a rank one simple Lie group $G$. There is a non-laminar approximate lattice $\Lambda \subset G \times \mathbb{R}$ that projects surjectively to $\Gamma \subset G$.
\end{lemma}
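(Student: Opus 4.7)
The plan is to construct $\Lambda$ using an unbounded homogeneous quasi-morphism $f:\Gamma\to\mathbb{R}$. Since $\Gamma$ is a lattice in a rank-one simple Lie group, it is hyperbolic (or relatively hyperbolic), so by the work of Brooks, Epstein--Fujiwara and Bestvina--Fujiwara its space of unbounded homogeneous quasi-morphisms is infinite dimensional. One can moreover arrange $f$ so that its defect $\delta f(\gamma_1,\gamma_2):=f(\gamma_1)+f(\gamma_2)-f(\gamma_1\gamma_2)$ takes only finitely many values modulo $\mathbb{Z}$ while including a value that is \emph{irrational} modulo $\mathbb{Z}$. A concrete recipe is to take $f_0:\Gamma\to\mathbb{Z}$ an unbounded integer-valued Brooks quasi-morphism and set $f:=f_0+\alpha\cdot\mathrm{sgn}(f_0)$ for some $\alpha\in\mathbb{R}\setminus\mathbb{Q}$, so that $\delta f\in\mathbb{Z}+\alpha\{-2,-1,0,1,2\}$.

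Define
\[
\Lambda := \bigl\{(\gamma,\,f(\gamma)+n)\in G\times\mathbb{R}\,:\,\gamma\in\Gamma,\ n\in\mathbb{Z}\bigr\}.
\]
The identity $f(\gamma)+f(\gamma^{-1})\in\mathbb{Z}$ (check: $f_0$ is odd by homogeneity and $\mathrm{sgn}(f_0)$ is odd) gives symmetry. Each $\gamma$-fibre is a translate of $\mathbb{Z}$, so $\Lambda$ is uniformly discrete in $G\times\mathbb{R}$; the finiteness of $\delta f$ modulo $\mathbb{Z}$ yields $\Lambda^2\subset F\cdot\Lambda$ with $F=\{e\}\times(\delta f\bmod\mathbb{Z})$ finite, so $\Lambda$ is an approximate subgroup. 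Projection to $G$ is surjective onto $\Gamma$ (pick $n=0$). Given $(g,r)\in G\times\mathbb{R}$, write $g=\gamma d$ with $\gamma\in\Gamma$, $d$ in a Borel fundamental domain $D_\Gamma$, and $r=f(\gamma)+n+s$ with $n=\lfloor r-f(\gamma)\rfloor$, $s\in[0,1)$; then $(g,r)=(\gamma,f(\gamma)+n)\cdot(d,s)$, so $\Lambda\cdot(D_\Gamma\times[0,1))=G\times\mathbb{R}$ and the multiplication $\Lambda\times(D_\Gamma\times[0,1))\to G\times\mathbb{R}$ is essentially injective. Hence $\Lambda$ is an approximate lattice in $G\times\mathbb{R}$.

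The crux is \emph{non-laminarity}. Using words of the form $(\gamma_1,f(\gamma_1))(\gamma_2,f(\gamma_2))(\gamma_3,f(\gamma_3))$ with $\gamma_1\gamma_2\gamma_3=e$ and $\delta\mathrm{sgn}(f_0)(\gamma_1,\gamma_2)=1$, one sees that $(e,n+\alpha)\in\Lambda^3$ for some $n\in\mathbb{Z}$; iterating (with inverses), one obtains $\{e\}\times(\mathbb{Z}+\alpha\mathbb{Z})\subset\langle\Lambda\rangle$. Because $\alpha\notin\mathbb{Q}$, this slice is dense in $\{e\}\times\mathbb{R}$, so $\langle\Lambda\rangle$ is not discrete in $G\times\mathbb{R}$. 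In particular $\Lambda$ is not contained in, and by Lemma~\ref{Corollary: Towers of star-approximate subsets are commensurable} not commensurable with, any lattice of $G\times\mathbb{R}$. To rule out commensurability with a laminar approximate subgroup in general, suppose for contradiction that $\Lambda$ had a good model $\phi:\langle\Lambda\rangle\to H$ with $\phi(\Lambda)$ relatively compact and $\phi^{-1}(U)\subset\Lambda$ for some neighbourhood $U$ of $e_H$. Restricted to $\{e\}\times(\mathbb{Z}+\alpha\mathbb{Z})$, the map $\phi$ is a homomorphism from a dense subgroup of $\mathbb{R}$ into $H$; relative compactness of $\phi(\Lambda)\supset\phi(\{e\}\times\mathbb{Z})$ combined with $\phi^{-1}(U)\subset\Lambda$ forces $\phi(\{e\}\times\alpha\mathbb{Z})$ to avoid $U$, yet $\alpha\mathbb{Z}$ accumulates on $0$ modulo $\mathbb{Z}$, a contradiction.

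The main obstacle is the last rigidity step: verifying that no good model can be built once the defect acquires an irrational component, and correspondingly, verifying that the perturbation $f=f_0+\alpha\cdot\mathrm{sgn}(f_0)$ genuinely produces a quasi-morphism with the claimed finite-mod-$\mathbb{Z}$ defect (which uses the boundedness of $\mathrm{sgn}(f_0)$ and the fact that its coboundary is bounded).
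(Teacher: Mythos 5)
There is a fatal flaw: the set you construct is laminar. Since $f_0$ is integer-valued, $\{(\gamma,f_0(\gamma)+n):\gamma\in\Gamma,\ n\in\mathbb{Z}\}$ is exactly the lattice $\Gamma\times\mathbb{Z}$, and your $f=f_0+\alpha\,\mathrm{sgn}(f_0)$ differs from $f_0$ by a function taking values in $\{-\alpha,0,\alpha\}$. Hence with $F:=\{e\}\times\{-\alpha,0,\alpha\}$ one has $\Lambda\subset F\cdot(\Gamma\times\mathbb{Z})$ and $\Gamma\times\mathbb{Z}\subset F\cdot\Lambda$, so $\Lambda$ is commensurable with the lattice $\Gamma\times\mathbb{Z}$ and is therefore laminar (a subgroup always has a good model, e.g.\ the trivial one). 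The root cause is structural: your fibre over $\gamma$ is $f(\gamma)+\mathbb{Z}$, which depends only on $f(\gamma)\bmod\mathbb{Z}=\alpha\,\mathrm{sgn}(f_0(\gamma))\bmod\mathbb{Z}$, a quantity taking three values; saturating the fibre by $\mathbb{Z}$ erases the unbounded, non-homomorphism part of the quasi-morphism, which is precisely the part that must survive to obstruct laminarity. This also explains why your final contradiction cannot work: as written it is a non sequitur anyway, since a good model $\phi$ is only an abstract homomorphism on the countable group $\langle\Lambda\rangle$ and nothing forces $\phi(e,\alpha k_j-m_j)\to e_H$ when $\alpha k_j-m_j\to 0$ in $\mathbb{R}$. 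Two further logical gaps: non-discreteness of $\langle\Lambda\rangle$ does not prevent $\Lambda$ from being commensurable with a lattice (your own example shows this), and ruling out a good model \emph{for $\Lambda$ itself} would not rule out laminarity, which is a statement about the whole commensurability class.

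The paper's construction avoids this collapse by keeping the quasi-morphism unbounded transversally to the fibre. One takes $f:\Gamma\to\mathbb{Z}[\sqrt2]$ a quasi-morphism with defect at most $1$ that is not within bounded distance of a homomorphism, lets $\tau$ be the Galois conjugation of $\mathbb{Z}[\sqrt 2]$, and sets
$$\Lambda:=\{(\gamma,x)\in\Gamma\times\mathbb{Z}[\sqrt2]\ :\ |f(\gamma)-\tau(x)|\le 1\}\subset G\times\mathbb{R}.$$
Here the fibre over $\gamma$ is the Meyer set $\tau^{-1}([f(\gamma)-1,f(\gamma)+1])\cap\mathbb{Z}[\sqrt2]$, which genuinely moves with $f(\gamma)$, and $(\gamma,x)\mapsto f(\gamma)-\tau(x)$ is an unbounded quasi-morphism on $\Gamma\times\mathbb{Z}[\sqrt2]$, not within bounded distance of a homomorphism, whose unit sub-level set is $\Lambda$; non-laminarity then follows from the good-model/quasi-morphism criterion of \cite{machado2019goodmodels}. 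If you want to salvage your approach you must replace the $\mathbb{Z}$-saturated graph by such a sub-level-set (cut-and-project along the quasi-morphism) construction.
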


\begin{proof}Let $f: \Gamma \rightarrow \mathbb{R}$ be a quasi-morphism that is not within bounded distance of a group homomorphism \cite{BestvinaBrombergFujiwara2016Bounded} with defect smaller than $1$. Since $\mathbb{Z}[\sqrt{2}]$ is dense in $\mathbb{R}$, we may modify $f$ by a uniformly bounded (arbitrarily small) amount and assume that $f$ takes values in $\mathbb{Z}[\sqrt{2}]$. Write also $\tau: \mathbb{Z}[\sqrt{2}] \rightarrow \mathbb{Z}[\sqrt{2}]$ the Galois conjugation. Finally, suppose as we may that $f$ is symmetric. Define then the approximate lattice $\Lambda$ in $G \times \mathbb{R}$ by
$$\Lambda:=\{(\gamma, x) \in \Gamma \times \mathbb{Z}[\sqrt{2}]: |f(\gamma) - \tau(x)| \leq 1\}.$$
The subset $\Lambda$ is easily seen to be an approximate lattice in $G \times \mathbb{R}$. Moreover, the map $(\gamma, x) \mapsto (f(\gamma),\tau(x))$ is a quasi-morphism not within bounded distance of a group homomorphism. So $\Lambda$ is not laminar by e.g. \cite{machado2019goodmodels}.

\end{proof}

\begin{proof}[Proof of Proposition \ref{Proposition: presque la fin}.]
Choose an approximate lattice $\Lambda \subset G \times \mathbb{R}$ given by Lemma \ref{Lemma: Non-laminar approximate lattice} and fix $K \geq 0$ such that $\Lambda$ is a $K$-approximate subgroup. Suppose that there is an $m \geq 0$ such that there is no approximate lattice $\Lambda' \subset \Gamma$ in $G$ such that $\Lambda'^m$ does not contain a model set. Write $B_n:=[-n;n]$ and note that $B_n$ is a $2$-approximate subgroup. Let $p_G: G \times \mathbb{R} \rightarrow G$ denote the natural projection. Let $\Lambda_G$ denote $p_G(\Lambda^2 \cap G \times [-\epsilon; \epsilon])$ for some $\epsilon > 0$. We know that $\Lambda_G$ is laminar since it is commensurable with $\Gamma$, so $\Lambda_G^m$ is a model set by our assumption. We first show that $\Lambda^2 \cap (G \times [-\epsilon; \epsilon])$ has a good model too for a choice of $\epsilon$ sufficiently small. 

To do so, we build a local$\setminus$Freiman homomorphism from $\Lambda_G^{3m}$ to $\Lambda^2 \cap (G \times [-\epsilon; \epsilon])$.
Note that $\Lambda^{3m}$ is a uniformly discrete approximate subgroup. So there is $\epsilon_0 > 0$ such that $\Lambda^{9m} \cap \{e\} \times [-3\epsilon_0; 3\epsilon_0]=\{e\}$. Now, for any $\lambda \in p_G(\Lambda^{3m} \cap G \times [-\epsilon_0; \epsilon_0])$ let $\tau(\lambda)$ denote any element of $\Lambda^{3m} \cap G \times [-\epsilon_0; \epsilon_0]$ such that $p_G(\tau(\lambda))=\lambda$.  If $\lambda_1, \lambda_2, \lambda_1\lambda_2 \in p_G(\Lambda^{3m} \cap G \times [-\epsilon_0; \epsilon_0])$, then $$\tau(\lambda_1)\tau(\lambda_2)\tau(\lambda_1\lambda_2)^{-1} \in \left(\{e\} \times [-3\epsilon_0; 3\epsilon_0]\right) \cap \Lambda^{9m} =\{e\}.$$ So, if we take $\epsilon = \min(\epsilon_0/2m, 1)$, $\tau(\lambda_1)\tau(\lambda_2)=\tau(\lambda_1\lambda_2)$ for all $\lambda_1, \lambda_2 \in \Lambda_G$. By our assumption, $\Lambda_G^m$ has a good model. So $\tau(\Lambda_G^m)$ has a good model by Lemma \ref{Lemma: Charac. good models}. Upon replacing $\Lambda$ with $\Lambda^m$ at the beginning, we may therefore assume that $\Lambda^2 \cap G \times [-\epsilon; \epsilon]$ has a good model. For every $n > 0$ now, the $(2K)^3$-approximate subgroup $\Lambda_n=\Lambda^2 \cap G \times B_n$ is commensurable with $\Lambda^2 \cap G \times [-\epsilon; \epsilon]$  (Lemma \ref{Lemma: Intersection of approximate subgroups}) and contains $\Lambda^2 \cap G \times [-\epsilon; \epsilon]$. By Lemma \ref{Lemma: Charac. good models} again, this approximate subgroup must have a good model. Thus, according to \cite[Prop. 3.7]{machado2019goodmodels}, the approximate subgroup $\Lambda^4=\bigcup_{i \in I} \Lambda_i^2$ is laminar. 
\end{proof}

\begin{proof}[Proof of Proposition \ref{Proposition: laminar app lattice not close to a strong app lattice}.]
Choose $m >0$. According to Proposition \ref{Proposition: presque la fin}, there is an approximate lattice $\Lambda \subset G$ such that $\Lambda^{8m}$ does not contain a model set. By Theorem \ref{Proposition: Detail resultat principal} (1), $\Lambda^m$ cannot contain a $\star$-approximate lattice. This concludes the proof. 
\end{proof}

\section{Wrapping up the proof}\label{Subsection: Wrapping up the proof}
The purpose of this last section is to finally establish Figure \ref{Figure: Hierarchy of approximate lattices} and Theorem \ref{Proposition: Detail resultat principal}. We would like to draw the reader's attention to the fact that some of the results presented as part of Figure \ref{Figure: Hierarchy of approximate lattices} were already known before. Here is a list of remarks in this direction: 

\begin{enumerate}
\item It was shown in \cite[Rem. 4.14.(1)]{bjorklund2016approximate} that when the ambient group is amenable BH-approximate lattices are strong approximate lattices, in \cite{machado2019goodmodels} we proved that BH-approximate lattices are laminar and that they are often uniform. In short, in amenable groups, all notions of approximate lattices are (almost) equivalent; 
\item In \cite{bjorklund2016approximate} they showed that uniform approximate lattices are BH-approximate lattices; 
\item Model sets with regular windows were known to be strong approximate lattices by \cite{bjorklund2016aperiodic} and model sets with general windows were known to be $\star$-approximate lattices by \cite{machado2020apphigherrank};
\item Non-uniform approximate lattices are plenty, as should already be clear, see for instance \cite[\S 7]{hrushovski2020beyond} and \cite[\S 2.1.1]{machado2020apphigherrank}  for constructions with a number-theoretic flavour;
\item for all $m \geq 0$, there is an approximate lattice $\Lambda$ of some locally compact second countable group such that $\Lambda^m$ is \emph{not} a $\star$-approximate lattice, see \S \ref{Subsection: Consequences of the existence of non-laminar approximate lattices}. Moreover, $\Lambda$ can be chosen uniform \emph{and} laminar; 
\item for all $m \geq 0$, there is a laminar approximate lattice $\Lambda$ of some locally compact second countable group such that $\Lambda^m$ does not contain a model set, see \S \ref{Subsection: Consequences of the existence of non-laminar approximate lattices};
\item there remain only one outstanding questions: 
\begin{question}
Can one prove that BH-approximate lattices are approximate lattices?
\end{question}
\end{enumerate}

We can now prove Figure \ref{Figure: Hierarchy of approximate lattices}.

\begin{proof}[Proof of Figure \ref{Figure: Hierarchy of approximate lattices} and Theorem \ref{Proposition: Detail resultat principal}.]
We will prove the implications roughly starting from the left of the diagram and going to the right.  As mentioned above model sets with regular windows are strong approximate lattices by \cite{bjorklund2016aperiodic}. Now, the fact that a strong approximate lattice is a $\star$-approximate lattice is a consequence of Lemma \ref{Lemma: Extended invariant hull contains invariant hull}.  Conversely, if $\Lambda$ is a $\star$-approximate lattice, $\Lambda^8$ has a good model by Theorem \ref{Proposition: Detail resultat principal} and, hence,  contains a model set by Lemma \ref{Lemma: good models and model sets}. That $\star$-approximate lattices are approximate lattices is a consequence of Proposition \ref{Proposition: Reformulation approximate lattices} combined with the unimodularity of their envelope (Proposition \ref{Proposition: Envelope of a star-approximate lattice is unimodular}). It is also clear that uniform approximate lattices are types of approximate lattices. It remains to prove that approximate lattices are BH-approximate lattices and its partial converse. Take $\Lambda$ an approximate lattice in a  second countable locally compact group $G$ and remark that $G$ must be unimodular (Lemma \ref{Proposition: Envelope of approximate lattice is unimodular}), $\mu$ a Borel probability measure on $G$, and $\nu$, $C$ and $\mu_G$ as in Proposition \ref{Proposition: Reformulation approximate lattices}. Then for all $\phi \in C^0_c(G)$ taking non-negative values

$$\mu * \nu(\mathcal{P}_{\Lambda}(\phi))   = \int_G\nu(\mathcal{P}_{\Lambda}(\phi_g)) d\mu(g) $$
where $\phi_g$ denotes the map $h \in G \mapsto \phi(gh)$.
Therefore, using the bounds from Proposition \ref{Proposition: Reformulation approximate lattices} and left-invariance of the Haar measure, we have
$$ \frac{1}{C}\mu_G(\phi) \leq \mu * \nu(\mathcal{P}_{\Lambda}(\phi)) \leq C \mu_G(\phi).$$
Since this is true for any such $\mu$, we find that any cluster point $\nu'$ of the sequence $\left(n^{-1}\sum_{i=1}^n\mu^{*i}*\nu \right)_{n \geq 0}$ is $\mu$-stationary and satisfies 
$$ \frac{1}{C}\mu_G(\phi) \leq \nu'(\mathcal{P}_{\Lambda}(\phi)) \leq C \mu_G(\phi).$$
So $\nu'$ is not concentrated on $\emptyset$. So $\Lambda$ is a BH-approximate lattice. Finally, the partial converse is Theorem \ref{Proposition: Detail resultat principal}(4). The proof of Figure \ref{Figure: Hierarchy of approximate lattices} is complete.
\end{proof}

\begin{appendix}
\section{A variation around Schreiber's theorem}

We now prove the technical lemma (Lemma \ref{Lemma: Schreiber around subspace})  used earlier in the proof of Lemma \ref{Lemma: Preparing quasi-models}. It will be deduced easily from a combination of a generalisation of Schreiber's theorem to the solvable setup \cite{machado2019infinite} and results concerning approximate subgroups contained in neighbourhoods of normal amenable subgroups \cite{machado2019goodmodels}.

\begin{proof}[Proof of Lemma \ref{Lemma: Schreiber around subspace}.]
Assume we are given a subspace $V \subset A$ (recall that $A$ is normal closed and isomorphic to a Euclidean space) and a compact subset $K' \subset G$ such that $\Lambda \subset VK'$ and $V \subset \Lambda K'$. If $\gamma \in \Comm_{G}(\Lambda)$ and $F$ denotes a finite subset such that $\gamma \Lambda \gamma^{-1} \subset \Lambda F$, then 
$$\gamma V \gamma^{-1} \subset \gamma \Lambda \gamma^{-1} \gamma K'\gamma^{-1} \subset \Lambda F\gamma K'\gamma^{-1} \subset VK'F\gamma K'\gamma^{-1}.$$
So $\gamma V \gamma^{-1}  = V$ i.e. $\Comm_{G}(\Lambda)$ normalises $V$. Thus, $\Lambda \subset V(K' \cap N_G(V))$ and $V \subset \Lambda (N_G(V) \cap K')$ by the modular law. To summarise, we only have to prove the existence of $V$ and the other conclusions follow.

Suppose now that $G$ is a connected Lie group. Choose a symmetric neighbourhood of the identity $W \subset G$ such that $W^8A$ does not contain any closed subgroup not contained in $A$ (e.g. by choosing a sufficiently small neighbourhood of the identity in $G/A$). Define $\Lambda':= \overline{W^2A \cap \Lambda^2}$. Since finitely many translates of $W$ cover $K$, finitely many translates of $WA$ cover $KA$ and $\Lambda$. By Lemma \ref{Lemma: Intersection of commensurable sets}, $\Lambda'$ is commensurable with $\Lambda$. According to \cite[Prop. 5.8]{machado2019goodmodels}, $\Lambda'$ is an amenable closed approximate subgroup of $G$. By the structure of amenable approximate subgroups \cite[Thm 1.6]{machado2019goodmodels}, there are a compact approximate subgroup $C \subset \Lambda'^4$, a closed approximate subgroup $\Lambda_{sol} \subset \Lambda'^4$ and a closed subgroup $N \subset \Lambda_{sol}$ normalised by $\Lambda_{sol}$ such that $\Lambda' \subset C \Lambda_{sol}$ and $\langle \Lambda_{sol}\rangle/N$ is solvable. By our choice of $W$, $N \subset A$ so $\langle \Lambda_{sol} \rangle$ is solvable. Write $L$ the closure of $\langle \Lambda_{sol} \rangle A$. Then $L$ is a solvable Lie group, $A \subset L$ and it is clearly enough to prove the result for $\Lambda_{sol} \subset L$.

By the Ado--Iwasawa theorem \cite[Proof of XVIII.3.2]{zbMATH03212078}, there is a faithful representation $\rho: L \rightarrow \GL_n(\mathbb{R})$ such that $\rho(A)$ is unipotent, hence closed.  So the map $\rho_{VA}$ is proper for any symmetric relatively compact neighbourhood of the identity $V$ in $L$. According to \cite{machado2019infinite}, there is a closed connected subgroup $N$ of $\GL_n(\mathbb{R})$ normalised by $\rho(L)$ and a compact subset $C$ such that $\rho(\Lambda_{sol}) \subset CN$ and $N \subset C\rho(\Lambda_{sol})$. We want to be able to choose $N \subset \rho (A)$. If $\rho  (A) \cap N$ is non-trivial we can quotient out by the connected component of the identity of $\rho (A) \cap N$ and proceed by induction. So assume that $\rho(A) \cap N$ is discrete. The normal connected subgroup $[N,\rho(A)]$ is contained in $\rho (A) \cap N$, so is trivial. So $\rho (A)$ and $N$ commute and, hence, $\rho (A)N$ is an abelian normal subgroup. But $\overline{N\rho (A)}/\rho (A)$ is compact, so $\overline{N\rho (A)}$ is of the form $\rho (A)\times C'$ with $C'$ a compact subgroup centralised by $\rho(L)$. So $N':=(NC') \cap \rho (A)$ is a connected subgroup of $\rho (A)$ such that $N' \subset C'N$ and $N \subset C'N'$. There is therefore a further compact subset $C''$ such that $\rho(\Lambda_{sol}) \subset C''N'$ and $N' \subset C''\rho(\Lambda_{sol})$. Since $\rho_{VA}$ is proper, we find that there is a compact subset $C'''$ such that $\Lambda_{sol} \subset C''' N''$ and $N'' \subset C''' \Lambda_{sol}$ where $N'':=(\rho)^{-1}(N')$.  This is enough to conclude when $G$ is a Lie group.

 Let us return to the general case. By the Gleason--Yamabe theorem \cite[Thm 5']{10.2307/1969792}, there is an open subgroup $U \subset G$ and a compact normal subgroup $K \subset U$ such that $U/K$ is a connected Lie group. Since $U$ is open and $A$ is connected, $A \subset U$. So $KA$ (and, hence,  $\Lambda$) is covered by finitely many translates of $U$. According to Lemma \ref{Lemma: Intersection of commensurable sets}, $\Lambda':=\Lambda^2 \cap U$ is commensurable with $\Lambda$. By the previous paragraph applied to the projection of $\Lambda'$ to $U/K$, we find a vector subspace $V \subset A$ and a compact subset $K' \subset U$ such that $\Lambda' \subset K'V$ and $V \subset K'\Lambda' $.  There is therefore $K'' \subset G$ compact such that  $\Lambda \subset K''V$ and $V \subset K''\Lambda $. Hence, we have proved Lemma \ref{Lemma: Schreiber around subspace}.
\end{proof}
\end{appendix}

\end{document}